\newtheorem{thm}{Theorem}[section]
\newtheorem{lem}[thm]{Lemma}
\newtheorem{prop}[thm]{Proposition}
\theoremstyle{definition}
\newtheorem{defn}[thm]{Definition}
\newtheorem{rmk}[thm]{Remark}
\numberwithin{equation}{section}
\newcommand{\txtd}{\textnormal{d}}
\newcommand{\N}{\mathbb{N}}
\newcommand{\R}{\mathbb{R}}
\newcommand{\U}{\mathcal{U}}
\newcommand{\cK}{\mathcal{K}}
\newcommand{\cI}{\mathcal{I}}
\newcommand{\w}{\omega}
\newcommand{\ep}{\varepsilon}
\newcommand{\nin}{{n\in\N}}
\newcommand{\nti}{{n\to\infty}}
\newcommand{\setword}[2]{%
  \phantomsection
  #1\def\@currentlabel{\unexpanded{#1}}\label{#2}%
}
\begin{document}
\title[Rate-induced tipping and asymptotic series]
{Estimating rate-induced tipping\\ via asymptotic series and a Melnikov-like method}
\author[C.~Kuehn]{Christian Kuehn}
\author[I.P.~Longo]{Iacopo P. Longo}
\email[Christian Kuehn]{ckuehn@ma.tum.de}
\email[Iacopo P. Longo]{longoi@ma.tum.de}

\address[C.~Kuehn, I.P.~Longo]{Technical University of Munich, 
Department of Mathematics,
Boltzmannstr.~3,
85748 Garching bei M\"unchen, Germany.}

\thanks{CK was partly supported by a Lichtenberg Professorship of the VolkswagenStiftung. CK also acknowledges partial support of the EU within the TiPES project funded the European Unions Horizon 2020 research and innovation programme under grant agreement No. 820970. IPL was partly supported by MICIIN/FEDER project RTI2018-096523-B-100 and by European Union’s Horizon 2020 research and innovation programme under the Marie Skłodowska-Curie grant agreement No 754462. CK and IPL also want to thank Hildeberto Jardon-Kojakhmetov for interesting discussions regarding asymptotic analysis in the early phase of this project.}

\subjclass[2010]{ 34A34, 37B25, 34C23, 34D05, 34D45}
\date{}
\begin{abstract}
The paper deals with the study of rate-induced tipping in asymptotically autonomous scalar ordinary differential equations. We prove that, in such a tipping scenario, a solution which limits at a hyperbolic stable equilibrium of the past limit-problem loses uniform asymptotic stability and coincides with a solution which limits at a hyperbolic unstable equilibrium of the future limit-problem. We use asymptotic series to approximate such pairs of solutions and characterize the occurrence of a rate-induced tipping by using only solutions calculable on finite time intervals. Moreover, we show that a Melnikov-inspired method employing the asymptotic series allows to asymptotically approximate the tipping point.

\end{abstract}
\keywords{rate-induced tipping, non-autonomous bifurcation, asymptotic series, invariant manifold}
\maketitle
\section{Introduction}
\label{secintro}

The study of critical transitions in complex systems has been object of a considerable scientific attention and effort. The reason lies in the need of more reliable mathematical tools to describe a variety of tipping phenomena in climate systems~\cite{paper:LHK, paper:WALC}, financial markets~\cite{paper:MLS,paper:YSY}, neuroscience~\cite{paper:Meisel,paper:Hill}, ecological and natural systems~\cite{paper:OKW,scheffer, paper:SvNHH} among others~\cite{scheffer,paper:KCT2}. In recent years~\cite{paper:WALC}, the phenomenon of rate-induced tipping has been proposed to be an alternative mechanism for a critical transition, with respect to the more classical autonomous bifurcations and noise-induced tipping~\cite{paper:APW}. Rate-induced tipping can be seen as a special type of a non-autonomous bifurcation, which manifests itself on a finite time interval, on which the parameters change significantly and non-adiabatically. This encompasses various real scenarios for example in ecology~\cite{paper:OKW,paper:SvNHH, paper:VWF}, climate~\cite{paper:AAJ,paper:AWVC,paper:LD, paper:WALC}, biology~\cite{paper:Hill} and quantum mechanics~\cite{paper:Kato}. Although a considerable number of applied examples for rate-induced tipping have been documented, the theoretical foundations are currently still lagging a bit behind this development. In this work, we contribute to build better theoretical methods based upon non-autonomous nonlinear dynamics and bifurcation theory to advance the study of transitions with critical variation rates of a parameter.  
 \par\smallskip
 
In the original formulation of rate-induced tipping, Ashwin et al.~\cite{paper:APW} consider a family of topologically equivalent autonomous differential problems parametrized in $\lambda\in [\lambda_-,\lambda_+]\subset \R$.
\[ 
\frac{\txtd x}{\txtd t}=\dot x =f(x,\lambda),\qquad x=x(t)\in\R^N,
\]
where $f$ is a sufficiently regular function from $\R^N\times \R$ onto $\R^N$ and for any fixed value of $\lambda$, the qualitative behavior of the solutions of any of such systems is supposed to be completely determined by its critical points which are assumed to be hyperbolic. Then, a smooth function $\Lambda:\R\to[\lambda_-,\lambda_+]$ is considered such that $\Lambda$ is asymptotically constant and in particular it converges to $\lambda_-$ as $t\to-\infty$ and to $\lambda_+$ as $t\to\infty$.  Therefore, one obtains a non-autonomous dynamical system of the form 
\begin{equation}\label{eq:init-probA}
\dot x =f\big(x,\Lambda(rt)\big),\qquad x\in\R^N,t\in\R.
\end{equation}
which is asymptotically autonomous in the past and in the future. The parameter $r>0$ represents the rate at which the time-dependent sweep between $\lambda_-$ and $\lambda_+$ takes place. System~\eqref{eq:init-probA} is associated to a set of continuous functions which play an important role in the study of rate-induced tipping. On the one hand, we have the continuous families of quasi-static equilibria  that map any $t\in\R$ to the hyperbolic equilibria of the autonomous problem obtained for $\lambda=\Lambda(rt)$. The graphs of these functions represent the adiabatic displacement of the equilibria upon the variation of $\lambda\in [\lambda_-,\lambda_+]$. On the other hand, it has been shown that for each stable hyperbolic equilibrium $X^s_-$ of the past limit-problem,~\eqref{eq:init-probA} has a solution $x^r_-:(-\infty,\beta)\to\R^N$ limiting to $X^s_-$ as $t\to-\infty$ which is also locally pullback attracting~\cite{paper:APW}. \par\smallskip 

Pullback attractivity is an inherent concept of non-autonomous dynamical systems and entails a property of attraction in the past.
Depending on $r$, the qualitative behaviour of any of such locally pullback attracting solutions may change  considerably. If these locally pullback attractive trajectories are all defined on the whole real line and limit at the ``respective equilibria'' of the autonomous future limit-problem as $t\to+\infty$, the system is said to end-point track the curves of quasi-static equilibria. When this is not true, a rate-induced tipping is said to happen. In this case, a local pullback attracting solution can become unbounded in finite time or it may converge to an equilibrium of the future limit-problem which does not represent the expected landing equilibrium determined by an adiabatic change of the parameter. 
\par\smallskip

The phenomenon of rate-induced tipping has been identified and studied analytically and numerically in several formulations. For example, in
one-dimensional systems (as in~\cite{paper:APW}), higher-dimensional systems (as in~\cite{paper:AA,paper:WXJ,paper:X,paper:KJ}), discrete systems (in~\cite{paper:Kiers}), multiscale systems (as in~\cite{paper:PW,paper:AWVC}), deterministic non-autonomous systems (in~\cite{paper:LNOR}), set-valued dynamical systems (as in~\cite{paper:Ca}),  and random dynamical systems (as in~\cite{paper:Hartl}).
\par\smallskip

In this paper, we study the occurrence of rate-induced tipping in scalar non-linear ordinary differential equations, i.e.~systems like~\eqref{eq:init-probA} where $N=1$, both from an analytical and a geometrical point of view. In particular, we highlight the following achievements.

\begin{itemize}[leftmargin=*, itemsep=2pt]
\item We show that for scalar asymptotically autonomous differential problems, the occurrence of a rate-induced tipping coincides with the loss of uniform asymptotic stability by one of the locally pullback attracting solutions limiting at the stable equilibria of the past limit-problem.  We like to point out that a loss of hyperbolicity---a stronger form  of uniform asymptotic stability which involves an exponential rate of convergence---has been proved for a certain class of scalar \emph{quadratic} differential equations with possibly nonautonomous asymptotic dynamics~\cite{paper:LNOR}. Although such a stronger result seems still hard to prove for general scalar problems, we believe that the achievement in our work contributes to reinforce the relation between rate-induced tipping and nonautonomous bifurcation theory.

\item The locally pullback solutions can be approximated by asymptotic series expansions whose terms can be calculated using only values of $f$, $\Lambda$ and their derivatives, and the families of quasi-static equilibria, which are all a-priori-known quantities of the given problem. By the term asymptotic, we mean that the approximations are reliable for $r>0$ small enough. Nevertheless, we show that for every $r>0$ the approximations are always reliable on  suitable half-lines of the real line.

\item The estimate on the errors obtained via the asymptotic series expansions permits to identify a sufficient condition for end-point tracking of a locally pullback attractive solution associated to a curve of quasi-static equilibria.

\item  The asymptotic approximations can be used to characterize the occurrence of rate-induced tipping via the change of relative order between pairs of solutions suitably chosen in a neighborhood of the locally pullback solutions. 
\end{itemize}

In fact, the rate-induced tipping mechanism studied here is reminiscent of global autonomous bifurcation problems, where the relative order of certain invariant manifolds is tracked via Melnikov/Lin-type methods. In summary, our combination of analytical and geometric methods shows that rate-induced tipping in the scalar context can be viewed via non-autonomous bifurcations due to its coincidence with the loss of uniform asymptotic stability, and it can hence be analyzed using a combination of classical tools from invariant manifold theory and asymptotic analysis. Furthermore, our results show that asymptotic series expansions provide an interesting additional tool for rate-induced tipping problems to connect the very slow (quasi-static or adiabatic) regime to the rate-induced tipping regime.\par\smallskip

The paper is organized as follows.
In section~\ref{sec:non-aut}, we set the notation and recall some notions on non-autonomous ordinary differential equations flows and attractivity. Section~\ref{sec:prelim} contains the assumptions, definitions and preliminary results on rate-induced tipping for scalar differential equations. In particular, we recall that every curve of quasi-static stable equilibria is associated to a locally pullback attracting solution and show that, similarly, every  branch of quasi-static unstable equilibria containing an unstable hyperbolic equilibrium of the future limit-problem  is associated to a locally pullback repelling solution. The main result of this section is Theorem~\ref{thm:fundamentals} where we prove that a rate-induced tipping coincides with a loss of uniform asymptotic stability by a locally pullback attracting solution limiting at a stable equilibrium of the past limit-problem through a collision with a locally pullback repelling solution limiting at an unstable equilibrium of the future limit-problem. Section~\ref{sec:asympt_series_exp} deals with the asymptotic approximation of the locally pullback solutions of~\eqref{eq:init-probA}. In Proposition~\ref{prop:expansions}, we show that every locally pullback solution (either attracting or repelling) can be approximated via an asymptotic series expansion when $r$ is sufficiently small. In particular, the zero-order approximation coincides with the respective curve of quasi-static equilibria and the coefficients of higher order can be calculated using only values of $f$, $\Lambda$ and their derivatives. Notably, all the coefficients of order higher or equal than one tend to zero as $|t|\to\infty$. This guarantees that for every $r>0$ the asymptotic series approximation is always effective on a suitable half-line of the real line. This deduction allows us to give information on the occurrence or the absence of a rate-induced tipping. Particularly, Proposition~\ref{prop:suff-end-point} contains a sufficient condition for end-point tracking upon assuming that the asymptotic series expansion holds on the whole real line with a suitably bounded (but not necessarily small) error. On the other hand, in Theorem~\ref{thm:melnikov} we characterize the occurrence of rate-induced tipping by choosing suitable pairs of solutions such that one is close to the pullback attracting solution in the sufficiently far but finite past, and the other is close to the pullback  repelling solution in the sufficiently far but finite future. We show that a change of relative order between such pairs of solution characterizes the occurrence of a rate-induced tipping for the system.

\section{Background: Non-autonomous ODEs, flows and attractivity}
\label{sec:non-aut}

In this section, we introduce the general notation used in the work and recall some basic notions and definitions about dynamical systems induced by non-autonomous ordinary differential equations. We will restrict the presentation to the definitions and results which are relevant for this work. We refer the reader to~\cite{book:GS} for an in-depth presentation.\par\smallskip

For any $N\in\N$ natural number, we shall denote by $\R^N$ the $N$-dimensional euclidean space with its norm $|\cdot|$. The symbols $\R^+$ and $\R^-$ will denote the intervals $[0,\infty)$ and $(-\infty,0]$, respectively. For every $V\subset\R^N$, $W\subset\R^M$, and $\nin$, we denote by $\mathcal{C}^n(V,W) $  the set of functions $f:V\to W$ whose partial derivatives up to order $n$ exist and are continuous. In particular,  the set of continuous functions from $\R\times\R$ into $\R$, denoted by $\mathcal{C}=\mathcal{C}(\R\times\R,\R) $ is endowed  with the compact-open topology. This means that a sequence $(f_n)_\nin$ in $\mathcal{C} $ converges to a function $f\in \mathcal{C}$ if and only if for every set $\cK\times \cI$, where $\cK$ and $\cI$ are compact subsets of $\R$, the sequence $(f_n)_\nin$ converges uniformly to $f$ on $\cK\times \cI$. It is well-known that one can define a (global) continuous flow $\pi: \mathcal{C}\times \R\to \mathcal{C}$ (that is $\pi$ satisfies the identity and group properties) on $\mathcal{C}$ via the shift map  $(f,\tau)\mapsto \pi(f,t)=f_\tau$ where $f_\tau$ is defined by $f_\tau(x, t)=f(x,t+\tau)$ (see~\cite{book:GS} for example). For any $f\in \mathcal{C}$, we denote by $\w(f)$ the \emph{omega limit-set} of $f$, i.e.~ the set of functions $g\in \mathcal{C}$ such that there is a sequence $ (t_n)_\nin$ in $\R$, $t_n\to\infty$, for which $g=\lim_\nti f_{t_n}$. In particular, $f$ is called \emph{positively precompact} if for all sequences $ (t_n)_\nin$ in $\R$, $t_n\to\infty$ there is a subsequence $ (t_{n_j})_{j\in\N}$ and a $g\in \mathcal{C}$ such that $\lim_{j\to\infty}f_ {t_{n_j}}=g$. The \emph{alpha limit set} $\alpha(f)$ of $f$ is defined analogously by considering the limit functions for sequences $ (t_n)_\nin$ in $\R$ with $t_n\to-\infty$. Now, fix $f\in \mathcal{C}$  and consider the non-autonomous differential equation
\begin{equation}
\label{eq:non-aut}
\dot x= f(x,t),
\end{equation}
We say that~\eqref{eq:non-aut} is \emph{forward asymptotically autonomous} (resp.~\emph{backward asymptotically autonomous}) if there is $g\in \mathcal{C}$ such that $\omega(f)=\{g\}$ (resp.~$\alpha(f)=\{g\}$). We say that~\eqref{eq:non-aut} is \emph{asymptotically autonomous} if it is both forward and backward asymptotically autonomous.\par\smallskip

We call $f$ \emph{admissible} if~\eqref{eq:non-aut} has a unique solution for all initial data $(t_0,x_0)\in\R^2$ and denote by $x(t,t_0,x_0)$ its unique solution  satisfying $x(t_0,t_0,x_0)=x_0$, where $t$ belongs to the maximal interval of definition $\cI_{t_0,x_0}$. A special notation will be used for the solutions with initial data $(0,x_0)$, $x_0\in\R$: $x(t,0,x_0)=x(t,f,x_0)$ and their maximal interval of definition $\cI_{x_0,f}$. Notice, in particular, that given any initial data $(t_0,x_0)\in\R^2$, the solution $x(\cdot,t_0,x_0)$ of~\eqref{eq:non-aut}, with $x(t_0)=x_0$, and the~solution $x(\cdot, f_{t_0}, x_0)$, of $\dot x=f_{t_0}(x,t)$, $x(0)=x_0$, are the same up to a time translation. Specifically, one has that 
\[
x(t,t_0,x_0)=x(t-t_0,f_{t_0},x_0),\quad\text{ for all } t\in \cI_{t_0,x_0}.
\]
\par\smallskip

We recall the following fundamental result of continuity.

\begin{lem}[Kamke]\label{lem:kamke}
Let $\mathcal{A}$ denote the collection of all admissible functions $f$ in $\mathcal{C}(\R^N\times\R,\R^N)$. Then, the solution function $x(t,f,x_0)$ is continuous on the subset $ \R\times\mathcal{A}\times \R^N$ for which it is defined. That is, if $(f_n)_\nin$ is a sequence of admissible functions in $\mathcal{C}(\R^N\times\R,\R^N)$ with limit $f$ in $\mathcal{C}(\R^N\times\R,\R^N)$, where $f$ is admissible, if $(x_n)_\nin$ is a sequence in $\R^N$ with limit $x_0$ and if $(t_n)_\nin$ is a sequence in $\cI_{x_n, f_n}$ with limit $t$, then $t\in \cI_{x_0, f}$ and 
\[
\lim_\nti x(t_n,f_n,x_n)=x(t,f,x_0).
\]
\end{lem}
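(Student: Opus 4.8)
The statement is the classical theorem on the joint continuous dependence of solutions upon the equation (in the compact-open topology), upon the initial value, and upon time, under the standing uniqueness hypothesis; its delicate feature is that \emph{no} Lipschitz or one-sided estimate on $f$ is available. The plan is: first, to prove a \emph{short-time} version of the assertion at an arbitrary base point -- which, by the translation identity $x(t,t_0,x_0)=x(t-t_0,f_{t_0},x_0)$ recalled above, may be taken at initial time $0$ -- by combining the Peano existence-time estimate with the Arzel\`a--Ascoli theorem and the uniqueness of solutions of the \emph{limit} equation; second, to propagate it along the whole maximal interval $\cI_{x_0,f}$ by a connectedness argument; and third, to evaluate at the points $t_n$.

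For the short-time step I would fix $f\in\mathcal{A}$, $x_0\in\R^N$, write $y(\cdot)=x(\cdot,f,x_0)$, pick arbitrary $r,a>0$, form the compact box $Q=\{x\in\R^N:|x-x_0|\le r\}\times[-a,a]$, and set $M=1+\max_Q|f|$. By compact-open convergence, $\max_Q|f_n|\le M$ and $|x_n-x_0|\le r/2$ for all large $n$, and the Peano existence-time estimate then yields a common $h=\min\{a,r/(2M)\}>0$ such that, for $n$ large, $v_n:=x(\cdot,f_n,x_n)$ is defined on $[-h,h]$, keeps $(v_n(s),s)\in Q$, and is $M$-Lipschitz there (the same estimate applied to $f$ gives $[-h,h]\subseteq\cI_{x_0,f}$). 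By Arzel\`a--Ascoli, every subsequence of $(v_n)_\nin$ has a further subsequence converging uniformly on $[-h,h]$ to some continuous $v:[-h,h]\to\R^N$; letting $\nti$ along such a subsequence in the integral identity $v_n(s)=x_n+\int_0^s f_n(v_n(\sigma),\sigma)\,\txtd\sigma$ -- which is legitimate because $f_n\to f$ uniformly on the compact set $Q$ while $v_n$ converges uniformly -- shows that $v$ solves $\dot x=f(x,s)$ with $v(0)=x_0$, hence $v=y$ on $[-h,h]$ by admissibility of $f$. Since every subsequential limit equals $y$, the full sequence satisfies $v_n\to y$ uniformly on $[-h,h]$.

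Next I would establish that, for every compact interval $[c_-,c_+]\subseteq\cI_{x_0,f}$ with $c_-\le 0\le c_+$, and all large $n$, $v_n$ is defined on $[c_-,c_+]$ with $v_n\to y$ uniformly there. Treating the forward direction (the backward one being symmetric), consider
\[
S=\{c\in\cI_{x_0,f}\cap[0,\infty):\ v_n\ \text{is defined on}\ [0,c]\ \text{and}\ v_n\to y\ \text{uniformly on}\ [0,c]\ \text{for all large}\ n\},
\]
a non-empty interval ($0\in S$ by the short-time step). If $c\in S$ then $v_n(c)\to y(c)$, so applying the short-time step at the base point $(c,y(c))$ -- using uniqueness, which identifies the solution of $\dot x=f(x,s)$ through $(c,y(c))$ with the restriction of $y$ -- extends the uniform convergence a definite amount beyond $c$; thus $S$ is relatively open in $\cI_{x_0,f}\cap[0,\infty)$. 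Since the existence time furnished by the short-time step can be chosen uniform as the base point ranges over the compact arc $\{(y(\sigma),\sigma):\sigma\in[0,c_+]\}$, one also gets that $S$ is relatively closed there; being a non-empty clopen subset of the connected set $\cI_{x_0,f}\cap[0,\infty)$, $S$ equals it, which is the claim. Finally, given $f_n\to f$, $x_n\to x_0$, $t_n\in\cI_{x_n,f_n}$ with $t_n\to t\in\cI_{x_0,f}$, I would choose such an interval $[c_-,c_+]$ containing $0$ and $t$ in its interior with $t_n\in[c_-,c_+]$ for $n$ large, and conclude from uniform convergence and the continuity of $y$ that
\[
|x(t_n,f_n,x_n)-x(t,f,x_0)|=|v_n(t_n)-y(t)|\le\sup_{[c_-,c_+]}|v_n-y|+|y(t_n)-y(t)|\longrightarrow 0.
\]
That such a limit $t$ necessarily lies in $\cI_{x_0,f}$ follows from the openness of maximal intervals together with the continuation theorem for the limit equation.

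I expect the main obstacle to be precisely the absence of a Lipschitz-type hypothesis: it rules out Gronwall-type error estimates, so the discrepancy $v_n-y$ cannot be controlled additively over a long time interval, and one is obliged instead to bring in the uniqueness of solutions of the \emph{limit} equation -- which is exactly what the Arzel\`a--Ascoli argument in the short-time step accomplishes, by pinning down every subsequential limit. The other point requiring care, though of a purely technical nature, is arranging the propagation so that the uniform existence time does not degenerate along the compact arc of the graph of $y$; once that is in place, the connectedness argument closes the proof.
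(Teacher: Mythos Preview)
The paper does not prove this lemma; it is merely recalled as a classical result (attributed to Kamke, with an implicit reference to Sell's monograph). There is therefore no proof in the paper to compare against, and your Arzel\`a--Ascoli argument is in fact the standard route: the Peano existence-time bound plus equicontinuity gives the short-time step, uniqueness for the limit equation pins down the subsequential limits, and the connectedness propagation extends uniform convergence to any compact subinterval of $\cI_{x_0,f}$. That part of your write-up is correct and is essentially how the result is proved in the references.

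There is, however, a genuine issue with your last sentence. The clause ``$t_n\in\cI_{x_n,f_n}$, $t_n\to t$ $\Rightarrow$ $t\in\cI_{x_0,f}$'' in the lemma is the assertion that the domain of the solution map is \emph{closed}, and that is false in general. For instance take $f_n=f$ with $f(x,t)=x^2$, $x_n=1-1/n\to x_0=1$, and $t_n\equiv 1$; then $\cI_{x_n,f}=(-\infty,n/(n-1))\ni 1$ for every $n\ge 2$, yet $\cI_{x_0,f}=(-\infty,1)\not\ni 1$, and indeed $x(1,f,x_n)=n-1\to\infty$. Your appeal to ``openness of maximal intervals together with the continuation theorem for the limit equation'' cannot repair this, because the claim itself is wrong. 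What your argument \emph{does} prove---and what Kamke's theorem actually says---is that the domain is \emph{open}: if $t\in\cI_{x_0,f}$ then eventually $t_n\in\cI_{x_n,f_n}$, and the values converge. The lemma as phrased in the paper is thus slightly misstated; fortunately, every application of it in the paper uses only the correct (openness-plus-continuity) version, so you should simply take $t\in\cI_{x_0,f}$ as a hypothesis and drop the final sentence.
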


We also recall some standard definitions of stability.
\begin{defn}
A solution $\widetilde x:[t_0,\infty)\to\R$ of $\dot x=f(x,t)$ with $f: \R\times \R \to \R$, is called \emph{uniformly stable} if for any $\ep>0$ there is $\delta=\delta(\ep)>0$ such that  $|\widetilde x(s) - x_0|<\delta$ for some $s>t_0$ implies $|\widetilde x(t) - x(t,s,x_0)|<\ep$ for all $t>s$.  

A solution $\widetilde x:[t_0,\infty)\to\R$ of $\dot x=f(x,t)$ with $f: \R\times \R \to \R$ is called \emph{uniformly asymptotically stable} if it is uniformly stable and there is $b>0$ such that for every $\ep>0$ a $T(\ep)>0$ exists such that if $|\widetilde x(s) - x_0|<b$, for some $s>t_0$, then $|\widetilde x(t) - x(t,s,x_0)|<\ep$ for all $t>s+T(\ep)$.
\end{defn}

The following classical result (see~\cite[Theorem F]{paper:Art}) relates the uniform asymptotic stability of the solutions of $\dot x=f(x,t)$ to the dynamic behavior of the asymptotic equations.

\begin{prop}\label{prop:asympt-stability}
Let $f\in \mathcal{C}$ be positively precompact and such that all the elements of $\omega(f)$ are admissible. Moreover, assume that $f(0,t)=0$ for all $t\in\R$ and $\widetilde x(t)=0$, $t\in\R$ is the unique solution of $\dot x= f(x,t)$ with $x(0)=0$. Then, $\widetilde x(t)$ is uniformly asymptotically stable if and only if there is a neighborhood $W$ of $0$ which is a region of uniform attraction collectively with respect to the family of limiting equations of $\dot x= f(x,t)$. That is, for any compact set $\cK\in W$ and any $\ep>0$ there is a $T>0$ such that whenever any solution $\overline x(t)$ of $\dot x =g(x,t)$, $g\in\omega(f)$, satisfies $\overline x(t_0)\in \cK$, then $|\overline x(t)|<\ep$ for all $t\ge t_0+T$.
\end{prop}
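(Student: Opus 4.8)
The plan is to prove the two implications of the biconditional separately, relying throughout on three tools: the continuity of solutions in $(t,f,x_0)$ provided by Lemma~\ref{lem:kamke}; the positive precompactness of $f$, which lets one extract a limiting equation $\dot x=g(x,t)$, $g\in\omega(f)$, from any sequence of time-shifts $f_{s_n}$ with $s_n\to+\infty$; and the translation identity $x(s+\tau,s,x_0)=x(\tau,f_s,x_0)$, which turns a segment of a solution of~\eqref{eq:non-aut} starting at a large time $s$ into a solution of the shifted problem $\dot x=f_s(x,t)$, hence, in the limit, into a solution of a limiting equation. I will also use that $f(0,\cdot)\equiv 0$ forces $g(0,\cdot)\equiv 0$, so that $\overline x\equiv 0$ solves every limiting equation and, by admissibility of $g$, is its unique solution through the origin; write $B_\rho=\{x\in\R:|x|<\rho\}$, and assume throughout (as is implicit in the stability definitions) that $f$ is admissible.

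For the forward implication I would argue as follows. Assume $\widetilde x\equiv 0$ is uniformly asymptotically stable, with $b>0$ the constant from the definition, and set $W=B_b$. Fix a compact $\cK\subset W$, an $\ep>0$, a limiting function $g=\lim_n f_{t_n}\in\omega(f)$, and a solution $\overline x$ of $\dot x=g(x,t)$ with $\overline x(t_0)=y_0\in\cK$. Uniform stability already forces the solutions of $\dot x=f_{t_n}(x,t)$ issuing from $(t_0,y_0)$ to stay bounded on $[t_0,\infty)$ --- via the translation identity they are solutions of~\eqref{eq:non-aut} started in $B_b$ --- so Lemma~\ref{lem:kamke} gives that $\overline x$ is defined on $[t_0,\infty)$ and is the uniform-on-compacts limit of these solutions. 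Since uniform asymptotic stability supplies a time $T(\ep)$, independent of the starting time and of the starting point in $B_b$, beyond which every such solution of~\eqref{eq:non-aut} has norm below $\ep$, I pass to the limit to get $|\overline x(t)|\le\ep$ for $t\ge t_0+T(\ep)$; as $T(\ep)$ depends on neither $g$, $y_0$, nor $t_0$, this is precisely collective uniform attraction of $W$ with respect to $\omega(f)$.

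For the reverse implication I would establish uniform stability first and then uniform attraction-in-time, each by contradiction. A failure of uniform stability produces an $\ep_0>0$ (which I may shrink so that $\overline{B_{\ep_0}}\subset W$), times $s_n>t_0$, points $x_n\to 0$, and first exit times $\tau_n$ with $|x(s_n+\tau_n,s_n,x_n)|=\ep_0$ while $|x(s_n+\tau,s_n,x_n)|<\ep_0$ on $[0,\tau_n)$. A first use of Lemma~\ref{lem:kamke} --- using a limiting equation when the base times are unbounded and plain continuity otherwise --- shows $\tau_n\to\infty$, since otherwise the exit value would converge to the value of the zero solution, contradicting $|x(s_n+\tau_n,s_n,x_n)|=\ep_0$. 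Then, with $T$ the collective-attraction time for $\cK=\overline{B_{\ep_0}}$ and $\ep_0/2$, I rewind to the points $w_n=x(s_n+\tau_n-T,s_n,x_n)\in B_{\ep_0}$; since $s_n+\tau_n-T\to\infty$, precompactness and Lemma~\ref{lem:kamke} let me pass $f_{s_n+\tau_n-T}\to g\in\omega(f)$ and $w_n\to w_*\in\overline{B_{\ep_0}}$, while the translation identity identifies $x(s_n+\tau_n,s_n,x_n)$ with the time-$T$ value of the solution of $\dot x=f_{s_n+\tau_n-T}(x,t)$ from $w_n$; in the limit this value has norm $\ep_0$, contradicting collective uniform attraction, which forces norm $<\ep_0/2$ at time $T$. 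For uniform attraction-in-time I fix $\eta_0>0$ with $\overline{B_{\eta_0}}\subset W$, let $b=\delta(\eta_0)$ be the uniform-stability radius just obtained (so solutions from $B_b$ stay in $B_{\eta_0}$ for all forward time), and suppose the conclusion fails for some $\ep\in(0,\eta_0)$, giving $s_n>t_0$, $|x_n|<b$, and $t_n>s_n+n$ with $|x(t_n,s_n,x_n)|\ge\ep$. Rewinding by the collective-attraction time $T_0$ for $\cK=\overline{B_{\eta_0}}$ and $\ep/2$, the point $v_n=x(t_n-T_0,s_n,x_n)$ lies in $\overline{B_{\eta_0}}$, and since $t_n-T_0\to\infty$ the same precompactness-and-Kamke passage identifies $x(t_n,s_n,x_n)$ in the limit with a time-$T_0$ value of a solution of a limiting equation started in $\cK$, which must have norm $<\ep/2$ --- a contradiction.

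The main obstacle is the reverse implication, and within it the transfer of a genuinely non-autonomous solution segment to a limiting equation: one cannot simply send the base time to $+\infty$, because the time at which the trajectory misbehaves is uncontrolled and the trajectory may roam through all of $W$ beforehand. The device that resolves this is to fix, once and for all, a backward horizon ($T$, resp.\ $T_0$) produced by the collective-attraction hypothesis itself, and to rewind the trajectory by exactly that amount, so that its base point stays in a fixed compact subset of $W$ while its base time still diverges; only then do precompactness and Kamke's theorem deliver a usable limiting trajectory. A secondary point, and the reason uniform stability must be proved before uniform attraction, is that one needs a priori boundedness --- hence global forward existence --- of the relevant solutions before invoking Lemma~\ref{lem:kamke} at all.
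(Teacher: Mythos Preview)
The paper does not actually prove this proposition: it is stated as a classical result and attributed to Artstein~\cite[Theorem~F]{paper:Art}, with no argument given. So there is nothing in the paper to compare your proof against; the question is simply whether your argument stands on its own.

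It does, and the architecture is the standard one for results of this type: pass solution segments of~\eqref{eq:non-aut} at large base times to solutions of a limiting equation via precompactness and Lemma~\ref{lem:kamke}, and in the reverse implication use the ``rewind by the attraction time $T$'' device so that the transferred initial data stays in a fixed compact set while the base time diverges. Two small points worth tightening. First, in the forward implication you set $W=B_b$ and then claim uniform stability makes solutions from $B_b$ bounded; strictly, uniform stability only bounds solutions starting in $B_{\delta(\ep')}$, so you should take $W=B_{\min(b,\delta(1))}$ (or similar) before proceeding. Second, when you pass to the limit you obtain $|\overline x(t)|\le\ep$, not the strict inequality in the statement; run the argument with $\ep/2$ to recover the strict bound. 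Neither point affects the substance.
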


\begin{rmk}
The results of the previous proposition apply to any arbitrary solution $\widetilde x$ of $\dot x = f(x,t)$ by considering the system
\[
\dot x =F(x,t)= f\big(x+\widetilde x(t),t\big)- f\big(\widetilde x(t),t\big).
\]
\end{rmk}

Besides attraction in the future, non-autonomous systems admit a form of attraction in the past which takes the name of pullback attractivity. This notion plays an important role in the study of rate-induced tipping. The following definition of local
pullback attractivity and repulsivity is taken from~\cite{paper:LNOR} and it was in turn adapted from~\cite[Section~2.3]{rasmln}. It is worth noting that it is weaker than the definition provided in~\cite{paper:APW}.

\begin{defn}\label{defn:pullback}
A solution $\overline x\colon(-\infty,\beta)\to\R$ (with $\beta\le\infty$) of
\eqref{eq:non-aut} is called {\em locally pullback attracting\/} if there exist
$s_0<\beta$ and $\delta>0$ such that if $s\le s_0$ and $|x_0- \overline x(s)|<\delta$
then $x(t,s,x_0)$ is defined for $t\in[s,s_0]$, and in addition
\[
 \lim_{s\to-\infty}|\overline x(t)-x(t,s,x_0)|=0
 \quad \text{for all $t\le s_0$}.
\]

A solution $\overline x\colon(\alpha,\infty)\to\R$  (with $\alpha\ge\infty$) of
\eqref{eq:non-aut} is called {\em locally pullback repelling\/}  if there exist
$s_0>\alpha$ and $\delta>0$ such that if $s\ge s_0$ and $|x_0- \overline x(s)|<\delta$
then $x(t,s,x_0)$ is defined for $t\in[s_0,s]$, and in addition
\[
 \lim_{s\to\infty}|\overline x(t)-x(t,s,x_0)|=0
 \quad \text{for all $t\ge s_0$}\,.
\]
\end{defn}

\section{General setting, assumptions and preliminary results on rate-induced tipping}\label{sec:prelim}
In this section, we introduce the notation regarding rate-induced tipping and clarify the overall setting of the work. Moreover, we recall a few key results from~\cite{paper:APW} and complete them (where necessary) in accordance with our framework. The section contains two new main results: Proposition~\ref{prop:suff-end-point} and Theorem~\ref{thm:fundamentals}.  

Proposition~\ref{prop:suff-end-point} provides a sufficient condition for the absence of a rate-induced tipping.  Unlike $\ep$-close tracking or forward basin stability \cite{paper:APW}, it does not require knowledge on the whole history of the pullback attractor or of the basins of attraction for the quasi-static equilibria. It is shown that proximity to $X^s_+$ at only one value of time beyond a certain threshold is sufficient. This is not surprising, given the persistence of hyperbolic solutions, but it seems to be absent from the literature on rate-induced tipping---for this reason we declare it as a proposition rather than a theorem. 

Theorem~\ref{thm:fundamentals} characterizes a rate-induced tipping with the loss of uniform asymptotic stability of an attractive orbit which collides with a repelling solution at  the tipping point. A stronger result has been proved for a certain class of scalar quadratic differential equations---possibly with nonautonomous asymptotic dynamics---showing that such collision entails a loss of hyperbolicity \cite{paper:LNOR}. In this context, hyperbolicity means the existence of an exponential dichotomy for the associated variational equation \cite{book:KR}, generalizing the notion of exponential rate of asymptotic convergence for solutions which are not stationary or periodic.
\par\smallskip

We will work under the following fundamental assumptions.\par\smallskip

\setword{\upshape(\textbf{H0})}{H0} Consider $f\in \mathcal{C}^2(\R\times\R,\R)$ bounded and  so that the parametric differential problems  
\begin{equation}\label{eq:parametric}
\dot x =f(x,\lambda),\qquad \lambda\in [\lambda_-,\lambda_+],
\end{equation}
are well-defined, admit existence and uniqueness of the solutions and their continuous dependence with respect to  parameter and initial data. Moreover, assume that for every $ \lambda\in [\lambda_-,\lambda_+]$, except at most a finite number of points in $(\lambda_-,\lambda_+)$ the autonomous dynamical system induced by~\eqref{eq:parametric} has only hyperbolic fixed points. In particular, we shall assume that there is at least one stable hyperbolic fixed point $X^s_-$ of the problem $\dot x =f(x,\lambda_-)$ such that, upon the variation on $ \lambda$, $X^s_-$ is continuously perturbed into a stable hyperbolic fixed point $X^s_\lambda$ of $\dot x =f(x,\lambda)$, with $ \lambda\in [\lambda_-,\lambda_+]$. In particular, we simplify the notation at $\lambda=\lambda_+$ by writing $X^{s}_+:=X^{s}_{\lambda_+}$. We shall call the continuous function $X^s:[\lambda_-,\lambda_+]\to\R$ defined by 
\[
\lambda\mapsto X^s(\lambda)=X^s_\lambda,
\]
the family of \emph{quasi-static stable equilibria} associated to $X^s_-$.
\par\smallskip
Now, let $\Lambda\in \mathcal{C}^2(\R, [\lambda_-,\lambda_+])$, be any bounded and strictly increasing function such that 
\[
\lim_{t\to\pm\infty}\Lambda(t)=\lambda_\pm\quad\text{and}\quad \lim_{t\to\pm\infty}\Lambda'(t)=0,
\]
and for any $r>0$ consider the non-autonomous differential problem
\begin{equation}\label{eq:init-prob}
\dot x =f\big(x,\Lambda(rt)\big),\qquad x\in\R,t\in\R.
\end{equation}
Under the considered assumptions, the solution function $x\colon\U\subset\R^3\to\R$, $(t,s,x_0)\mapsto x(t,s,x_0)$ is continuous. Notice also that 
\[
\lim_{t\to-\infty}f\big(x,\Lambda(rt)\big)= f(x,\lambda_-)=:f_-(x)\ \ \text{and}\ \ \lim_{t\to\infty}f\big(x,\Lambda(rt)\big)= f(x,\lambda_+)=:f_+(x),
\] 
uniformly for $x$ in a compact subset of $\R^N$.
Therefore,~\eqref{eq:init-prob} is asymptotically autonomous. We will call the autonomous differential problem $\dot x= f_-(x)$  the past limit-problem, and $\dot x= f_+(x)$ the future limit-problem of~\eqref{eq:init-prob}, and denote by $x(\cdot, f_-,x_0)$ and $x(\cdot, f_+,x_0)$ the unique solution of $\dot x= f_-(x)$ and $\dot x= f_+(x)$, respectively, with initial data  $(0,x_0)$.

In particular, we will also have the time-parametrized curve of quasi-static equilibria, which (with a little abuse of notation) we keep denoting with the same symbol, defined by 
\[
\R^+\times\R\ni(r,t)\mapsto X^{s}\big(\Lambda(rt)\big)=:X^{s}(rt)\in\R.
\]
Due to construction, $X^{s}$ is continuous and bounded. We shall also assume that there is $p>0$ such that $\partial_xf\big(X^{s}(t),\Lambda(t)\big)<-p<0$, for all $t\in\R$, where $\partial_xf$ is the first derivative of $f$ with respect to the variable $x$. \par\smallskip

The assumptions considered so far are inherited from~\cite{paper:APW}. As we will see in the very next results, they allow us to construct (and extend) the same framework of~\cite{paper:APW} and analyze rate-induced tipping in terms of the behaviour of a locally pullback attracting solution. Importantly, the existence of this solution and the persistence of the connection between the attractor in the past and the one in the future for "small" rates, are obtained  through arguments of singular perturbation theory which require these assumptions (see Propositions \ref{prop:pbk-attr-rep} and \ref{prop:suff-end-point}). It is therefore natural that such set of assumptions appears in other works on the subject (e.g.~\cite{paper:KJ}). We also like to point out that milder assumptions on regularity and a different argument using persistence of hyperbolic solutions are used in \cite{paper:LNOR} to obtain similar results for a class of nonautonomous quadratic differential problems---in that context the geometric argument is not applicable. In this work, however, regularity will not be considered as an issue. In fact, in Section \ref{sec:asympt_series_exp} we will always require that our function $f$ admits partial derivatives of any order with respect to the variable $x$ and that they are continuous. 
It is also worth noting that, in other contexts, for example where rate-induced tipping is treated in terms of geometric thresholds \cite{paper:X} or investigated numerically \cite{paper:HNRS}, the assumption on  regularity for $f$ and $\Lambda$ is generally reduced to $C^1(\R\times\R,\R)$.  
\par\smallskip

The following proposition recalls a crucial result in~\cite{paper:APW} where the solution stemming from the hyperbolic equilibrium $X^s_-$ of the past limit-problem is recognized as locally pullback attractive. We include a similar statement for a locally pullback repelling solution stemming from a hyperbolic equilibrium $X^u_+$ of the future limit-problem.
\begin{prop}
\label{prop:pbk-attr-rep}
Consider the assumption~\ref{H0}. Then, the following statements hold true.
\begin{itemize}[leftmargin=*,itemsep=2pt]
\item For every $r>0$ there is a solution $x^r_-(\cdot)=x_-(r,\cdot)\in \mathcal{C}((-\infty,\beta^r_-),\R^N)$  of~\eqref{eq:init-prob} such that 
\[
\lim_{t\to-\infty}x^r_-(t)=X^s_-,
\]
and $x^r_-$ is the only solution satisfying such limit.
Moreover, $x^r_-(\cdot)$ is locally pullback attractive and there exist $T>0$ and a neighbourhood $U\subset\R$ of $X^s_-$ such that $x^r_-(\cdot)$ is the only trajectory remaining in $U$ for all $t<-T$. 

\item If the problem $\dot x =f(x,\lambda_+)$ has also an unstable hyperbolic equilibrium $X^u_+$, then  for every $r>0$ there is a solution $x^r_+(\cdot)=x_+(r,\cdot)\in \mathcal{C}((\beta^r_+,\infty),\R^N)$ of~\eqref{eq:init-prob} such that
\[
\lim_{t\to\infty}x^r_+(t)=X^u_+\,,
\]
and $x^r_+$ is the only solution satisfying such limit.
Moreover, $x^r_+(\cdot)$ is locally pullback repelling and there exist $T>0$ and a neighbourhood  $V\subset\R$ of $X^u_+$ such that  $x^r_+(\cdot)$ is the only trajectory remaining in $V$ for all $t>T$.
\end{itemize}
\end{prop}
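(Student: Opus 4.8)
The plan is to prove the first statement directly, by constructing $x^r_-$ as a local pullback attractor of the scalar problem, and to obtain the second statement by reversing time. First I would reduce everything to a local picture near $X^s_-$. Write $F(x,t):=f\big(x,\Lambda(rt)\big)$, so that $F(\cdot,t)\to f_-$ as $t\to-\infty$ uniformly on compact sets, and recall that $f_-(X^s_-)=0$ with $f_-'(X^s_-)=\partial_xf(X^s_-,\lambda_-)<0$. Using the latter strict inequality and continuity, I would fix $a>0$ and $q>0$ so small that on the compact interval $U:=[X^s_--a,\,X^s_-+a]$ one has $f_-<0$ on $(X^s_-,X^s_-+a]$, $f_->0$ on $[X^s_--a,X^s_-)$, and $\partial_xf(x,\lambda_-)\le -q$ for all $x\in U$; then, by the uniform convergence $F(\cdot,t)\to f_-$ on $U$, there is $T>0$ such that for every $t\le -T$ we have $F(X^s_-+a,t)<0$, $F(X^s_--a,t)>0$ (so $U$ is positively invariant for $\dot x=F(x,t)$ on $(-\infty,-T]$) and $\partial_xF(x,t)\le -q/2$ on $U$.

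Next I would construct $x^r_-$ by compactness. For $n\in\N$ the solution $\phi_n:=x(\cdot,-T-n,X^s_-)$ is defined and $U$-valued on $[-T-n,-T]$ by positive invariance; passing to a subsequence along which $\phi_n(-T)$ converges in $U$ and using continuous dependence on initial data (Lemma~\ref{lem:kamke}), the $\phi_n$ converge on every $[-T-k,-T]$ to a solution $x^r_-$ of~\eqref{eq:init-prob} that is $U$-valued on $(-\infty,-T]$ (boundedness of $f$ gives global existence), which I extend to its maximal interval $(-\infty,\beta^r_-)$. To see $x^r_-(t)\to X^s_-$ as $t\to-\infty$, fix $\delta\in(0,a)$; on the compact annulus $A_\delta=\{x:\delta\le|x-X^s_-|\le a\}$ one has $\inf_{A_\delta}|f_-|>0$, so by uniform convergence there are $c_\delta>0$ and $T_\delta\ge T$ with $|F(x,t)|\ge c_\delta$ and $F(x,t)\,(x-X^s_-)<0$ for $x\in A_\delta$, $t\le -T_\delta$. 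Along a $U$-valued solution on $(-\infty,-T_\delta]$, the quantity $|x(t)-X^s_-|$ is therefore strictly increasing as $t$ decreases while the solution stays in $A_\delta$, with rate at least $c_\delta$; were $x^r_-$ in $A_\delta$ at some $t_0\le -T_\delta$, running backwards it would leave $U$ in finite time, a contradiction. Hence $|x^r_-(t)-X^s_-|<\delta$ for all $t\le -T_\delta$, and since $\delta\in(0,a)$ is arbitrary, $x^r_-(t)\to X^s_-$.

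The remaining properties then follow from a scalar Gr\"onwall estimate. If $w:=x^r_--\widetilde x$ is the difference of two $U$-valued solutions on $(-\infty,-T]$, convexity of $U$ and the mean value theorem give $\dot w=\alpha(t)w$ with $\alpha(t)\le -q/2$, whence $|w(t)|\le|w(s)|\,\txte^{-(q/2)(t-s)}$ for $s\le t\le -T$; letting $s\to-\infty$ with $t$ fixed and using $|w(s)|\le 2a$ forces $w\equiv 0$. This shows at once that $x^r_-$ is the \emph{only} solution with $\lim_{t\to-\infty}x^r_-(t)=X^s_-$ (any such solution is eventually $U$-valued) and the only trajectory staying in $U$ for all $t<-T$. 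For local pullback attractivity I would pick $s_0\le -T$ so negative that $\dist\big(x^r_-(s),\partial U\big)\ge\delta_0>0$ for all $s\le s_0$ (possible since $x^r_-(s)\to X^s_-\in\mathrm{int}\,U$); then for $s\le s_0$ and $|x_0-x^r_-(s)|<\delta_0$ the solution $x(\cdot,s,x_0)$ stays in $U$ on $[s,s_0]$, hence is defined there, and $|x^r_-(t)-x(t,s,x_0)|\le\delta_0\,\txte^{-(q/2)(t-s)}\to 0$ as $s\to-\infty$, for every $t\le s_0$, which is exactly Definition~\ref{defn:pullback}.

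Finally, for the pullback repelling solution I would apply everything above to the time-reversed equation $\dot y=-f\big(y,\Lambda(-rt)\big)$: its right-hand side is again bounded and $\mathcal C^2$, the function $t\mapsto\Lambda(-rt)$ is strictly decreasing with limit $\lambda_+$ as $t\to-\infty$ and derivative tending to $0$ at $\pm\infty$ (monotonicity of $\Lambda$ itself was never used, only the existence of these limits and the contraction rate), and $\partial_x\big(-f(X^u_+,\lambda_+)\big)=-\partial_xf(X^u_+,\lambda_+)<0$ because $X^u_+$ is unstable hyperbolic for $\dot x=f(x,\lambda_+)$. The first part then produces a unique solution of the reversed equation tending to $X^u_+$ as $t\to-\infty$, locally pullback attracting and locally unique near $X^u_+$ in the past; undoing the substitution $t\mapsto -t$ turns this into $x^r_+$, with "locally pullback attracting for the reversed equation" matching "locally pullback repelling for~\eqref{eq:init-prob}" term by term in Definition~\ref{defn:pullback}. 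The step I expect to be the main obstacle is the backward-limit argument: because the relevant time interval is unbounded, the asymptotic autonomy must be upgraded into genuinely uniform sign and rate estimates for $F$ on $U$ and on the annuli $A_\delta$, with the threshold $T_\delta$ necessarily allowed to depend on $\delta$; once these are in place, the construction is a routine compactness argument and the rest is Gr\"onwall.
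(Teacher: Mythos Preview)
Your argument is correct, and it takes a genuinely different route from the paper. The paper does not prove the first bullet at all: it simply cites Theorem~2.2 of Ashwin--Perryman--Wieczorek. For the second bullet, because that external theorem presumably needs the full \ref{H0} set-up (a branch of hyperbolic equilibria defined for \emph{all} $\lambda\in[\lambda_-,\lambda_+]$), the paper first truncates and mollifies $\Lambda$ so that the unstable branch through $X^u_+$ persists over the whole parameter range of the modified profile, then time-reverses and invokes the cited result again, and finally transfers the conclusion back to the original equation on a right half-line.

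Your approach, by contrast, is self-contained: you build $x^r_-$ by a compactness argument inside a forward-invariant interval $U$, prove the backward limit by a flux/annulus argument, and obtain uniqueness and local pullback attractivity from a scalar Gr\"onwall estimate using $\partial_xF\le -q/2$ on $U$. Because your first-bullet proof uses only (i) hyperbolicity of the single equilibrium $X^s_-$, (ii) uniform convergence $F(\cdot,t)\to f_-$ on compacts as $t\to-\infty$, and (iii) boundedness/smoothness of $f$, it never needs the global persistence of the stable branch or the monotonicity of $\Lambda$. This is precisely what lets you apply plain time reversal $t\mapsto -t$ for the second bullet without the paper's mollification step: the reversed problem has past limit $-f_+$, and $X^u_+$ is hyperbolic stable for $-f_+$, which is all your argument requires. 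The trade-off is length for transparency: the paper's proof is a few lines plus a citation, while yours is a page of elementary analysis but makes explicit that only local data near the relevant equilibrium and asymptotic autonomy are needed.
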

\begin{proof}
The proof of the statement for $x^r_-$ is given in Theorem~2.2 of~\cite{paper:APW}. In order to prove the statement for $x^r_+$ firstly notice that since  $\dot x =f(x,\lambda_+)$ has an unstable hyperbolic equilibrium $X^u_+$, then there is $\widetilde\lambda\in[\lambda_-,\lambda_+)$ such that $X^u_+$ is (continuously) perturbed into an unstable hyperbolic equilibrium $X^u(\lambda)$ for all $\lambda\in [\widetilde\lambda,\lambda_+]$. Denote by $\widetilde\tau$ the real number such that $\Lambda(\widetilde\tau)=\widetilde\lambda$ and consider the function $\widetilde\Lambda:\R\to[\widetilde\lambda,\lambda_+]$ defined by
\[
\widetilde\Lambda(\tau)=\begin{cases}
\widetilde\lambda, &\text{if }\tau<\widetilde\tau,\\
\Lambda(\tau),&\text{if }\tau\ge \widetilde\tau.
\end{cases}
\]
Notice that $\widetilde\Lambda$ is continuous but not differentiable for $\tau=\widetilde\tau$ in general. However, given $\ep>0$, one can always construct a function $\overline \Lambda \in \mathcal{C}^2(\R,[\widetilde\lambda,\lambda_+])$ which coincides with $\widetilde\Lambda$ outside a ball of radius $\ep$ centered at $\widetilde \tau$  as a convolution with a mollifier (see~\cite{book:AFP}), i.e. 
\[
\overline \Lambda(\tau)=\int_\R\widetilde\Lambda(s)\rho_\ep(\tau-s)\,\txtd s,
\]
where for example $\rho_\ep(\tau)=\exp\big(1/(|\ep\tau|^2-1)\big)$ if $|\tau|<\ep$ and $\rho_\ep(\tau)=0$ otherwise. Then, we obtain the non-autonomous problem 
\begin{equation}\label{eq:25/10-17:11}
\dot x =f\big(x,\overline\Lambda(rt)\big),
\end{equation}
and notice that, for any $t>(\widetilde\tau+\ep)/r$, a solution of~\eqref{eq:25/10-17:11}  is also a solution of~\eqref{eq:init-prob}.

Next, using the change of variable $t=-s$, system~\eqref{eq:25/10-17:11} becomes the time-reversed problem
\[
\dot x= - f\big(x,\overline\Lambda(-rs)\big)
\]
which limits at the autonomous systems $\dot x =-f_+(x)$ as $s\to-\infty$ and to $\dot x =-f(x,\widetilde\lambda)$ as $s\to\infty$, respectively. It is easy to show that the unstable equilibrium $X^u_+$ for the future limit-problem $\dot x =f_+(x)$ of~\eqref{eq:init-prob},  is now a stable equilibrium for the past limit-problem of this time-reversed problem, i.e.~$\dot x =-f_+(x)$ . Therefore, by applying~\cite[Theorem 2.2]{paper:APW} again, one obtains that there is a unique solution $\overline x$ of the time-reversed problem which is locally pullback attracting and limits at $X^u_+$ as $t\to-\infty$. Therefore $\overline x(-t)$ is a locally pullback repelling solution for~\eqref{eq:25/10-17:11} which means that the solution $x^r_+$ of~\eqref{eq:init-prob} such that $x^r_+(t)=\overline x(-t)$ for all $t>(\widetilde\tau+\ep)/r$ is locally pullback repelling for~\eqref{eq:init-prob} (see Definition~\ref{defn:pullback}), which concludes the proof.
\end{proof}

Now, we can rigorously  recall the notion of end-point tracking and of rate-induced tipping \cite{paper:APW,paper:LNOR}. Figure \ref{fig:figure1} shows a graphic representation of the two notions for a scalar quadratic differential problem.
\begin{defn}\label{def:R-tipping}
Under the introduced notation and assumptions and fixed $r>0$:
\begin{itemize}[leftmargin=*]
\item We say that $x^r_-(t)$ \emph{end-point tracks the curve of quasi-static equilibria} $X^s(rt)$ if  $x^r_-(\cdot)$ is defined on the whole real line and 
\[
\lim_{t\to+\infty}x^r_-(t)=X^s_+\,.
\]
\item We say that the system~\eqref{eq:init-prob} \emph{undergoes a rate-induced tipping at $r=r^*\in(0,\infty)$} if for all $r\in(0,r^*)$, all locally pullback attracting solutions constructed as in Proposition~\ref{prop:pbk-attr-rep} end-point track their respective curves of quasi-static equilibria, and there is at least  one locally pullback attracting solution  $x^{r^{\scaleto{*}{3pt}}}_-$ that does not end-point track the respective curve of quasi-static equilibria $X^s$.
\end{itemize}
\end{defn}

\begin{figure}[htbp]
\centering

\begin{overpic}[trim={2.1cm 0cm 2cm 0},clip,width=\textwidth]{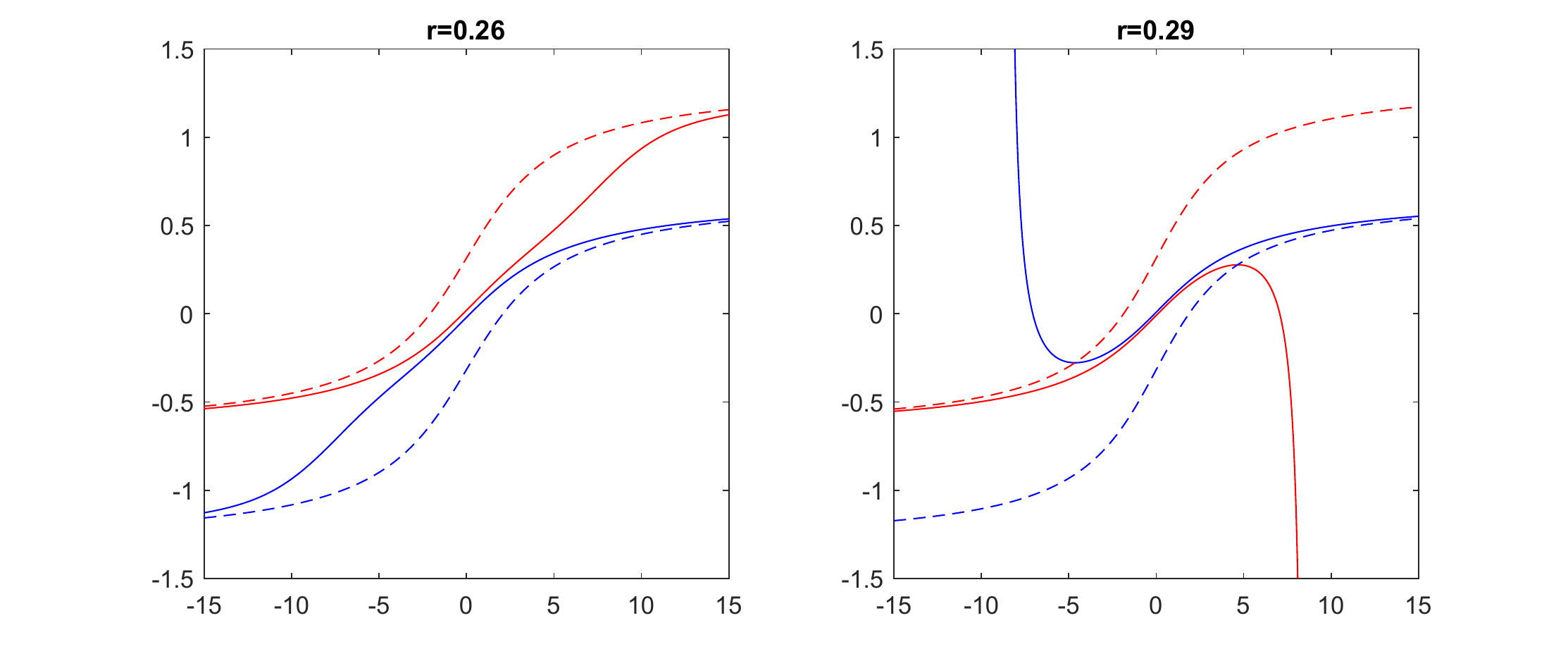}   
\put(35,0){$t$}	
\put(89,0){$t$}
\put(-2,40){$x$}
\put(51,40){$x$}
\end{overpic}
				
\caption{A numerical simulation showing the end-point tracking and  rate-induced tipping for the scalar differential problem $\dot x= -(x-(2/\pi)\arctan(rt))^2+0.1$ upon the variation of the parameter $r$. The solid lines represent the locally pullback attracting and locally pullback repelling solutions of the ODE, namely $x^r_-(t)$ (in red) and  $x^r_+(t)$ (in blue), respectively. The dashed lines represent the curves of quasi-static equilibria $X^{s}(rt)$ (in red) and $X^{u}(rt)$ (in blue) of the associated family of autonomous problems. } \label{fig:figure1}
\end{figure}

Next, we present a sufficient condition for the end-point tracking of the locally pullback attracting solution $x^r_-$ associated to the curve of quasi-static equilibria $X^s$. Although, the result can be easily generalized to differential problems in $\R^N$, for coherence with this work set-up, we present it in the scalar case. 

\begin{prop}
\label{prop:suff-end-point}
Consider assumption~\ref{H0} and assume that $x^r_-$ is globally defined and bounded for some $r>0$. Let $\cK\subset \R$ be any compact interval such that $X^s_+\in \cK$ and $x^r_-(t)\in \cK$ for all $t\in\R$. Then, a  $T_{r, \cK}>0$ exists such that if for every $\ep>0$ there is $t_\ep>T_{r,\cK}$ for which $|x^r_-(t_\ep)-X^s_+|<\ep$ then $x^r_-$ end point tracks the curve of quasi-static equilibria $X^s$, i.e.~$x^r_-(t)\to X^s_+$ as $t\to\infty$.
\end{prop}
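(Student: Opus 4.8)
The plan is to exploit the scalar structure: since $X^s_+$ is a hyperbolic stable equilibrium of the future limit-problem $\dot x = f_+(x)$, there is a neighbourhood $(X^s_+-\eta, X^s_++\eta)$ on which $f_+$ has the sign of $-(x-X^s_+)$, i.e.~$f_+>0$ to the left and $f_+<0$ to the right. Because $f(x,\Lambda(rt))\to f_+(x)$ uniformly on compact sets as $t\to\infty$, the nonautonomous right-hand side inherits these signs for $t$ large: there exist $\eta'\in(0,\eta]$ and $T_{r,\cK}>0$ such that $f(x,\Lambda(rt))>0$ whenever $X^s_+-\eta'\le x\le X^s_+-\eta'/2$ and $t>T_{r,\cK}$, and symmetrically $f(x,\Lambda(rt))<0$ on the mirror strip. (More quantitatively, one can use the bound $\partial_x f(X^s(t),\Lambda(t))<-p<0$ together with continuity to get a genuine strip of inward-pointing flow; the precise shrinking of the strip is a routine compactness argument using $\cK$.) First I would make this choice of $T_{r,\cK}$ and $\eta'$ precise.

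Second, I would use the hypothesis: for every $\ep>0$, in particular for $\ep=\eta'/2$, there is a time $t_\ep>T_{r,\cK}$ with $|x^r_-(t_\ep)-X^s_+|<\eta'/2$. So the trajectory $x^r_-$ enters the interval $I=(X^s_+-\eta'/2, X^s_++\eta'/2)$ at some time past $T_{r,\cK}$. Because the two strips bounding $I$ are inward-pointing for all $t>T_{r,\cK}$, a standard barrier/comparison argument for scalar ODEs shows that once inside $I$ the solution cannot exit: at the right endpoint of the inner interval the velocity is negative, at the left endpoint it is positive, so $x^r_-(t)\in I$ for \emph{all} $t\ge t_\ep$ (this also re-confirms global forward existence on $[t_\ep,\infty)$, though that is assumed). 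This is the step where scalarity is essential — it is exactly the trapping-region argument that fails in higher dimensions without extra structure.

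Third, having confined $x^r_-$ to a small forward-invariant neighbourhood of $X^s_+$ for $t\ge t_\ep$, I would upgrade "trapped" to "convergent". Here I would invoke Proposition~\ref{prop:asympt-stability} (Artstein's theorem): shifting so that $X^s(rt)$ becomes the zero solution, the family of limiting equations of $\dot x=f(x,\Lambda(rt))$ is the single equation $\dot x = f_+(x)$, for which $X^s_+$ is uniformly (indeed exponentially, by hyperbolicity) asymptotically stable; hence $0$ is uniformly asymptotically stable for the nonautonomous equation, so there is a uniform basin of attraction. Choosing $\eta'$ from the first step small enough to lie inside this basin (equivalently, shrinking $T_{r,\cK}$), the trapped trajectory $x^r_-(t)$ satisfies $x^r_-(t)-X^s(rt)\to 0$, and since $X^s(rt)\to X^s_+$ we conclude $x^r_-(t)\to X^s_+$, which is end-point tracking. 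Alternatively, and perhaps more elementarily, one can avoid Artstein entirely: for $t\ge t_\ep$ the scalar field $f(x,\Lambda(rt))$ is strictly negative for $x>X^s_+$ and strictly positive for $x<X^s_+$ within the strip (not just on its boundary), so $x^r_-(t)$ is eventually monotone toward $X^s_+$ and, being bounded and monotone, converges; a short argument rules out convergence to any point other than $X^s_+$ using that the field is bounded away from $0$ off any neighbourhood of $X^s_+$.

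The main obstacle is the first step: pinning down the threshold $T_{r,\cK}$ and the strip width $\eta'$ so that the inward-pointing sign conditions hold \emph{uniformly} for $t>T_{r,\cK}$, using only the uniform-on-compacts convergence $f(\cdot,\Lambda(rt))\to f_+$ and the hyperbolicity of $X^s_+$; everything after that is the standard scalar trapping-plus-convergence argument. The role of the compact interval $\cK$ is precisely to make the convergence uniform along the relevant range of $x$-values, so that a single $T_{r,\cK}$ works.
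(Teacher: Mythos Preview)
Your overall strategy---choose $T_{r,\cK}$ and $\eta'$ so that the strips flanking $(X^s_+-\eta'/2,X^s_++\eta'/2)$ carry the correct sign for $t>T_{r,\cK}$, use the hypothesis with $\ep=\eta'/2$ to enter, trap by a scalar barrier argument, then upgrade to convergence---is valid and substantially more elementary than the paper's. The paper does not use the order structure at all: it invokes a persistence theorem for hyperbolic solutions of nonautonomous equations (Carvalho--Langa--Robinson--Su\'arez), cuts $f$ off outside $\cK$, and for each small $\eta$ manufactures a genuine hyperbolic solution $\widetilde y_\eta$ of a time-truncated problem lying within $\eta$ of $X^s_+$; $T_{r,\cK}$ is the truncation time for a fixed $\bar\eta$, and the First Approximation Theorem then shows that $x^r_-$, once within $\mu$ of $\widetilde y_{\bar\eta}$, is captured. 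That machinery transfers verbatim to $\R^N$, which is the point of the paper's choice; your route is tied to scalarity, as you note, and in exchange needs no external black boxes. Incidentally, your $T_{r,\cK}$ depends only on $r$ and $\eta'$, not on $\cK$; the compact interval earns its keep only in the paper's cutoff construction.

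Two repairs are needed in your convergence step. First, the Artstein route does not go through as written: $X^s(rt)$ is \emph{not} a solution of $\dot x=f(x,\Lambda(rt))$ (it is the frozen equilibrium, and $\frac{d}{dt}X^s(rt)\ne 0$), so shifting by it does not put you in the setting of Proposition~\ref{prop:asympt-stability}; shifting by the constant $X^s_+$ fails for the same reason. To salvage Artstein you would first need a bounded solution near $X^s_+$ to shift by, which is exactly what the paper's persistence machinery produces. Second, in the elementary route your claim that $f(x,\Lambda(rt))$ has the sign of $-(x-X^s_+)$ throughout the inner interval, hence $x^r_-$ is ``eventually monotone toward $X^s_+$'', is false: the instantaneous zero of $f(\cdot,\Lambda(rt))$ sits at $X^s(rt)$, not at $X^s_+$. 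The correct finish is the nested-barrier argument you already gesture at in your final clause: for each $\delta\in(0,\eta'/2)$ there is $T_\delta$ beyond which the field is $\le -c_\delta<0$ on $[X^s_++\delta,\,X^s_++\eta'/2]$ and $\ge c_\delta$ on the mirror strip, so the trapped trajectory is driven into $(X^s_+-\delta,X^s_++\delta)$ in finite time and cannot re-exit; letting $\delta\downarrow 0$ gives $x^r_-(t)\to X^s_+$. With that adjustment your proof is complete.
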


\begin{proof}
The first step is to use a fundamental result of persistence for hyperbolic solutions of non-autonomous dynamical systems. Consider $\alpha>0$ and $h_0\in \mathcal{C}(\R\times\R,\R)$, $(y,t)\mapsto h_0(y,t)$ differentiable in $y$ for every $t\in\R$ such that the differential problem 
\begin{equation}\label{eq:28/10-10:40}
\dot y=-\alpha y+h_0(y,t),
\end{equation}
has an attractive hyperbolic solution $\widetilde y_0:\R\to\R$ in the sense that the corresponding variational equation has an exponential dichotomy on $\R$, i.e.~ there are $k\ge 1 $ and $\beta>0$ such that 
\[
\exp\int_s^t\big[-\alpha +\partial_y h_0\big(\widetilde y(u),u\big)\big]\,  du\le k\, e^{-\beta(t-s)},\quad\text{whenever }t\ge s.
\]
Then, thanks to Proposition 2.1 in~\cite{paper:CLRS} (see also~\cite{paper:CL} or \cite{potz} for a even more general formulation) there is $0<\eta_0\le 1$ such that for every $0<\eta\le\eta_0$ and $\cK'\subset\R$ compact, there is $\delta_{\eta,\cK'}>0$ such that, if $h\in \mathcal{C}(\R\times\R,\R)$, $(y,t)\mapsto h(y,t)$ differentiable in $y$ for every $t\in\R$ satisfies 
\begin{equation}\label{eq:15/10-15:51}
\sup_{y\in \cK'\!,\,t\in\R }|h(y,t)-h_0(y,t)|+ |\partial_y h(y,t)-\partial_yh_0(y,t)|<\delta_{\eta,\cK'},
\end{equation}
then also the equation 
\begin{equation}\label{eq:15/10-16:00}
\dot y=-\alpha y+h(y,t)
\end{equation}
has an attractive hyperbolic solution $\widetilde y:\R\to\R$ and $\|\widetilde y(\cdot)-\widetilde y_0(\cdot)\|_{\mathcal{C}(\R,\R)}<\eta$. In particular, there is a common dichotomy constant pair $(k,\beta)$ valid  for all the hyperbolic solutions obtained for~\eqref{eq:15/10-16:00} whenever $h$ satisfies~\eqref{eq:15/10-15:51}. Moreover, the First Approximation Theorem~\cite[Theorem III.2.4]{book:HALE} guarantees that there is $\mu=\mu(\eta_0)>0$, such that if $|y_0-\widetilde y(\tau)|<\mu$ for some $\tau\in\R$, then $|y(t,h,(\tau,y_0))-\widetilde y(t)|\to0$ as $t\to\infty$, where $y(\cdot,h,(\tau,y_0))$ denotes the solution of~\eqref{eq:15/10-16:00} satisfying $y(\tau,h,(\tau,y_0))=y_0$. \par\smallskip

Now fix $r>0$ and let $\cK_0\subset\R$ be a compact interval containing $X^s_+$ and $x^r_-(t)$ for all $t\in\R$. Notice that if $\cK_0=\{X^s_+\}$ the statement is obvious; therefore, we will assume that $\cK_0$ is strictly larger than $\{X^s_+\}$. Additionally, let  $\cK$ be any compact interval strictly containing $\cK_0$, and consider a function $f_0:\R\times[\lambda_-,\lambda_+]\to\R$, continuously differentiable in $x$ for every $\lambda\in[\lambda_-,\lambda_+]$ such that $f_0(x,\lambda)=f(x,\lambda) $ if $x\in \cK_0$ and  $f_0(x,\lambda)=0 $ if $\R\setminus \cK$. Notice that such a function can always be constructed for example by considering a function which is equal to $f$ in an open neighborhood of $\cK_0$, decays linearly to $0$ towards the border of $\cK$ and realizing its convolution with a suitable mollifier~\cite{book:AFP}. Notably, $x^r_-$ is still a solution of $\dot x = f_0\big(x,\Lambda(rt)\big)$ and  $X^s_+$ is a stable hyperbolic equilibrium for $\dot x =f_0(x,\lambda_+)$ that can be equivalently written as the equation of perturbed motion \eqref{eq:28/10-10:40} with  $-\alpha=\partial_xf(X^s_+,\lambda_+)$, $y=x-X^s_+$ and $h_0(y,t)= f_0(y+X^s_+,\lambda_+)+\alpha y$.  At this point, notice that for every $0<\eta\le \overline \eta=\min\{\eta_0,\mu/3\}$, there is $T_{\eta,r,\cK}>0$ such that for all $ t\ge T_{\eta,r,\cK}$,
\[
\sup_{x \in \R}\big|f_0\big(x,\Lambda(rt)\big)-f_0(x,\lambda_+)\big|+ \big|\partial_xf_0\big(x,\Lambda(rt)\big)-\partial_xf_0(x,\lambda_+)\big|<\delta_{\eta,\cK}.
\]
Therefore, denoted by $h_\eta(y,t)$ the function from $\R\times\R$ into $\R$ defined by 
\[
h_\eta(y,t)=\begin{cases}
f_0\big(y+X^s_+,\Lambda(rT_{\eta,r,\cK})\big)+\alpha y&\text{if }t<T_{\eta,r,\cK},\\
f_0\big(y+X^s_+,\Lambda(rt)\big)+\alpha y &\text{if } t\ge T_{\eta,r,\cK},
\end{cases}
\]
we have that $h_\eta$ satisfies~\eqref{eq:15/10-15:51}, and thus~\eqref{eq:15/10-16:00} (with $h=h_\eta$) has an attractive hyperbolic solution $\widetilde y_\eta:\R\to\R$ such that $\|\widetilde y_\eta(\cdot)\|_{\mathcal{C}(\R,\R)}<\eta$. Notice also that $\widetilde y_\eta$ is a solution of $\dot y= f(y+X^s_+,\Lambda(rt))$ for $t>T_{\eta,r,\cK}$. Furthermore, if $0<\eta_1<\eta_2\le \overline \eta$,  then
\begin{equation}\label{eq:15/10-17:35}
T_{\eta_2,r,\cK}\le T_{\eta_1,r,\cK},\qquad\text{and}\quad\lim_{t\to\infty}|\widetilde y_{\eta_1}(t)-\widetilde y_{\eta_2}(t)|=0,
\end{equation}
because $\widetilde  y_{\eta_2}$ is also a solution of~\eqref{eq:15/10-16:00} with $h=h_{\eta_1}$ for $t>T_{\eta_1,r,\cK}$ and thanks also to the hyperbolicity of $\widetilde y_{\eta_1}$ and the fact that $|\widetilde  y_{\eta_2}(T_{\eta_1,r,\cK})-\widetilde  y_{\eta_1}(T_{\eta_1,r,\cK})|\le \eta_1+\eta_2<\mu$.
Hence, for all $0<\eta\le\overline \eta$, $\widetilde y_\eta(t)\to 0$ as $t\to\infty$. Let $T_{r,\cK}=T_{\overline \eta,r,\cK}$ and assume that for every  $\ep>0$ there is $t_\ep>T_{r,\cK}$ for which $|y^r_-(t_\ep)|=|x^r_-(t_\ep)-X^s_+|<\ep$. Therefore, we have that for any  $0<\eta\le\overline \eta$,
$|y^r_-(t_\eta)- \widetilde y_{\smash{\overline\eta}}(t_\eta)|<\mu.
$
Consequently, 
\[
\lim_{t\to\infty} y^r_-(t)=\lim_{t\to\infty} \widetilde y_{\smash{\overline\eta}}(t)=0,
\]
which concludes the proof.
\end{proof}

Lemma 2.3 in~\cite{paper:APW} proves that there is $\overline r>0$ such that the locally pullback attracting solution $x^r_-$ proceeding from the stable hyperbolic equilibrium $X^s_-$ of the past limit-problem is globally defined and end-point tracks the curve of quasi-static equilibria $X^s(rt)$ for all $0<r<\overline r$.
As follows, we recall the basic ideas for the global existence of $x^r_-$ (see~\cite{paper:APW} for more details) which make use of the theory of persistence for normally hyperbolic invariant non-compact manifolds (see~\cite{book:E}). We also include an alternative argument to prove the end-point tracking condition motivated by the fact that it will be useful in the rest of the paper. Most importantly, we show that for all $0<r<\overline r$, $x^r_-$ is uniformly asymptotically stable.  

\begin{prop}\label{prop:hyperbolic}
Under assumption~\ref{H0}, there is $\overline r>0$ such that if $0<r<\overline r$ then the solutions $x^r_-$ provided by {\rm Proposition~\ref{prop:pbk-attr-rep}} is globally defined, end-point tracks the curve of quasi-static equilibria $X^s(rt)$
and it is uniformly asymptotically stable. 
\end{prop}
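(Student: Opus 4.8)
The plan is to derive all three conclusions from a single fact: for $r>0$ small enough, $x^r_-$ is globally defined and is a uniformly exponentially attractive hyperbolic solution of~\eqref{eq:init-prob}. First I would recall, as in Lemma~2.3 of~\cite{paper:APW}, the geometric-singular-perturbation construction: after rescaling~\eqref{eq:init-prob} to the slow time $\tau=rt$ one gets the fast--slow system $\dot x=f(x,\Lambda(\tau))$, $\dot\tau=r$, whose critical set contains the branch $\mathcal S=\{(X^s(\Lambda(\tau)),\tau):\tau\in\R\}$ of quasi-static stable equilibria attached to $X^s_-$; since $\partial_xf(X^s(\Lambda(\tau)),\Lambda(\tau))<-p$ for all $\tau$ and $f\in\mathcal C^2$ is bounded, $\mathcal S$ is uniformly normally hyperbolic and attracting, so persistence of normally hyperbolic non-compact manifolds~\cite{book:E} gives, for all sufficiently small $r>0$, a locally invariant manifold lying in an $O(r)$-tube around $\mathcal S$ which is forward-trapping by the attractivity of $\mathcal S$. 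Since $x^r_-(t)\to X^s_-$ and $X^s(\Lambda(rt))\to X^s_-$ as $t\to-\infty$, $x^r_-$ enters this tube in the far past, hence remains in it, is therefore bounded on its maximal interval, and so is globally defined with $\sup_{t\in\R}|x^r_-(t)-X^s(\Lambda(rt))|=O(r)$.

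Next I would use the tube estimate to show that $x^r_-$ is a hyperbolic solution. For $r$ small the pair $(x^r_-(t),\Lambda(rt))$ stays in a fixed compact set for all $t$, so continuity of $\partial_xf$ together with $\partial_xf(X^s(\Lambda(rt)),\Lambda(rt))<-p$ gives $\partial_xf(x^r_-(t),\Lambda(rt))\le-p/2$ for every $t\in\R$; hence the variational equation $\dot\xi=\partial_xf(x^r_-(t),\Lambda(rt))\,\xi$ has an exponential dichotomy on $\R$ with projector the identity and $t$-independent constants. From here uniform asymptotic stability is a routine application of the First Approximation Theorem~\cite[Theorem III.2.4]{book:HALE}, argued exactly as in the proof of Proposition~\ref{prop:suff-end-point}: writing the equation of perturbed motion $\dot y=\partial_xf(x^r_-(t),\Lambda(rt))\,y+g(y,t)$, Taylor's theorem and the uniform continuity of $\partial_xf$ on the compact region give $g(y,t)=o(|y|)$ uniformly in $t$, so the theorem makes $y\equiv 0$ uniformly asymptotically stable.

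For end-point tracking I would combine the tube estimate with $\Lambda(rt)\to\lambda_+$ to get $\limsup_{t\to\infty}|x^r_-(t)-X^s_+|=O(r)$, hence, for $r$ small, there is $T>0$ such that $x^r_-(t)$ lies in any prescribed neighbourhood of $X^s_+$ for $t\ge T$; on such a neighbourhood the argument already carried out in the proof of Proposition~\ref{prop:suff-end-point}---persistence of $X^s_+$ as a hyperbolic solution $\widetilde y$ of the future-asymptotically-autonomous equation, with $\widetilde y(t)\to X^s_+$, followed by the First Approximation Theorem---forces $x^r_-(t)\to X^s_+$. (Alternatively one may read convergence off the slow manifold, since there $x=\psi_r(\tau)$ solves $r\psi_r'(\tau)=f(\psi_r(\tau),\Lambda(\tau))$ with $\Lambda(\tau)\to\lambda_+$ and $\partial_xf<0$ near $X^s_+$, which pins $\psi_r(\tau)\to X^s_+$.) One then takes $\overline r$ to be the smallest of the finitely many smallness thresholds used above.

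The hard part will be the uniformity over the non-compact time axis: the slow manifold must persist and stay $O(r)$-close to $\mathcal S$ for all $\tau\in\R$ at once (not merely on compact $\tau$-intervals), and, correspondingly, the variational equation along $x^r_-$ must carry an exponential dichotomy with $t$-independent constants. Both rest squarely on the two structural hypotheses of~\ref{H0}: the uniform gap $\partial_xf(X^s(\Lambda(\cdot)),\Lambda(\cdot))<-p$ along the whole branch, and the decay $\Lambda'(t)\to0$ as $t\to\pm\infty$, which controls the slow drift at the ends. Granting these, the geometric persistence step and the identification of $x^r_-$ with the trapped trajectory are the delicate points, while the passage from the uniform dichotomy to uniform asymptotic stability and end-point tracking is standard.
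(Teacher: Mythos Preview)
Your proposal is correct and shares the paper's first step (persistence of the normally hyperbolic branch $\mathcal S$ via~\cite{book:E}, giving global existence and the $O(r)$-tube around $X^s(rt)$), but thereafter the arguments diverge.

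For uniform asymptotic stability you linearize along $x^r_-$, obtain an exponential dichotomy from $\partial_xf(x^r_-(t),\Lambda(rt))\le -p/2$, and invoke the First Approximation Theorem. The paper instead works directly with comparison: it defines $\overline r$ so that $\partial_xf(x^r_-(t),\Lambda(rt))<0$ for all $t$, and checks by Taylor expansion that $\beta^r_\ep=x^r_-+\ep$ is a strict supersolution and $\alpha^r_\ep=x^r_-\,-\,\ep$ a strict subsolution for every small $\ep$, so the $\ep$-tube about $x^r_-$ is forward invariant and shrinking. Your route yields more (hyperbolicity, with explicit exponential rate) at the cost of slightly more machinery; the paper's route is entirely elementary and exploits the scalar order structure.

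For end-point tracking you feed the $O(r)$-tube estimate into the mechanism of Proposition~\ref{prop:suff-end-point} (persistence of $X^s_+$ as a hyperbolic solution of the forward-asymptotic problem, plus the First Approximation Theorem). The paper instead argues directly with Lemma~\ref{lem:kamke}: the tube pins $\underline X=\liminf x^r_-$ and $\overline X=\limsup x^r_-$ in $[X^s_+-\ep,X^s_++\ep]$, then Kamke continuity against $f_+$ together with the hyperbolic attractivity of $X^s_+$ forces both limits to equal $X^s_+$. The paper notes it uses this Kamke-based variant deliberately, as the same manoeuvre recurs later (e.g.\ in the proof of Theorem~\ref{thm:fundamentals}); your approach is self-contained but does not set up that tool.
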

\begin{proof}
We only sketch the proof of the global existence of $x^r_-$ (as given in~\cite[Lemma 2.3]{paper:APW})  in order to set some notation. Set $\tau=r t$  and increase the dimension of~\eqref{eq:init-prob}  by adding the equation $\txtd \tau/\txtd t=r$, that is,
\begin{equation}\label{eq:planar_system}
\begin{cases}
\frac{\txtd x}{\txtd t} =f\big(x,\Lambda(\tau)\big),\\
\frac{\txtd \tau}{\txtd t}=r.\\
\end{cases}
\end{equation}
When $r=0$, the flow induced by~\eqref{eq:planar_system} on $\R^2$ is the union over $\tau \in\R$ of the flows on $\R$ induced by the autonomous scalar problems $\frac{\txtd x}{\txtd t}=f(x,\Lambda(\tau))$ in each fiber.
Then, it is possible to prove that $(X^{s}(t),t)$ (which is a smooth connected and complete submanifold of $\R^{N+1}$) is a non-compact \emph{normally hyperbolic invariant manifold} (see~\cite[Definition 1.8]{book:E}) for the flow induced by~\eqref{eq:planar_system} at $r=0$. Furthermore, $(X^{s}(t),t)$ trivially has \emph{empty unstable bundle}. Then, Theorem~3.1  in~\cite{book:E} (with additional details in Section 1.6.1 for the non-autonomous perturbation) guarantees that for each $\ep>0$ there is a $r_\ep>0$ such that for every $0<r<r_\ep$ there is a unique submanifold $(\widetilde x^r(t),rt)$ in the $\ep$-neighborhood of $(X^{s}(rt),rt)$ that is diffeomorphic to $(X^{s}(rt),rt)$ and invariant under the flow induced by~\eqref{eq:planar_system} for the given value of $r$ (see also Section 4.1 in~\cite{book:E}). From Proposition~\ref{prop:pbk-attr-rep} we immediately have that it must be $\widetilde x^r=x^r_-$, and therefore $x^r_-$ is globally defined and bounded. 
\par\smallskip

We shall now prove that for $\ep>0$ sufficiently small,  $x^r_-$ end point tracks $X^s$  for all  $0<r<r_{\ep}$. Fix $\ep>0$, from the first part of the proof we have that 
\[
X^s(rt)-\ep<x^r_-(t)<X^s(rt)+\ep,\quad\text{for all } 0<r<r_\ep, t\in\R.
\]
Denoted by $\underline X:=\liminf_{t\to \infty} x^r_-(t)$ and  $\overline X:=\limsup_{t\to \infty} x^r_-(t)$, there are sequences $(t_n)_\nin,(t_m)_{m\in\N}$ of real numbers so that $t_n\to\infty,t_m\to\infty$ and for all $0<r<r_\ep$,
\[
X^s_+-\ep\le \underline X:=\lim_{m\to \infty} x^r_-(t_m)\le \overline X:=\lim_{n\to \infty} x^r_-(t_n)\le X^s_++\ep.
\]
Since $\ep $ can be taken as small as desired and $X^s_+$ is hyperbolic stable, there is $s>0$ such that 
\[
\max\big\{\big| x\big(s,f_+,\overline X\big)-X^s_+\big|,\big| x \big(s,f_+,\underline X\big)-X^s_+\big|\big\}<\frac{\ep}{2}.
\]
On the other hand, recall that $f_{t_n},f_{t_m}\to f_+$ as $n\to\infty$. Therefore, from Lemma~\ref{lem:kamke}, for the fixed  $s>0$ we have that there is $n_\ep\in\N$ such that for all $n,m\in\N$ with $n,m>n_\ep$,
\[
\big|x\big(s,f_{t_n},x^r_-(t_n)\big)-x\big(s,f_+,\overline X\big)\big|<\frac{\ep}{2},\ \text{ and }\  \big|x\big(s,f_{t_m},x^r_-(t_m)\big)-x\big(s,f_+,\underline X\big)\big|\big\}<\frac{\ep}{2}.
\]
Then, using the cocycle property and the triangular inequality, one has that
\begin{equation}\label{eq:15/09-17:00}
|x^r_-(t_n+s)-X^s_+|\le \big|x\big(s,f_{t_n},x^r_-(t_n)\big)-x\big(s,f_+,\overline X\big)\big|+ \big|x\big(s,f_+,\overline X\big)-X^s_+\big|<\ep
\end{equation}
and analogously, $|x^r_-(t_m+s)-X^s_+|<\ep$. From the arbitrariness of $\ep>0$ we deduce that $\lim_{t\to\infty}x^r_-(t)=X^s_+$. In other words, $x^r_-$ end-point tracks the curve of quasi-static equilibria $X^s(rt)$.
\par\smallskip

In order to prove the uniform asymptotic stability, let us recall that by assumption $\partial_xf\big(X^s(rt),\Lambda(rt)\big)<-p<0$ for all $t\in\R$. Hence, from the first part of this proof and the continuity of $\partial_xf$, we can consider 
\[
0<\overline r := \sup\left\{r>0\ \big|\
\partial_xf\big( x^r_-(t),\Lambda(rt)\big)<0\,\text{ for all }t\in\R\right\}.
\]
For any fixed $0<r<\overline r$ and  $\ep>0$, consider the two continuous functions $\alpha^r_\ep,\beta^r_\ep\colon\R\to\R$ defined by
\[
t\mapsto\alpha^r_\ep(t)=x^r_-(t)-\ep,\quad\text{and}\quad t\mapsto\beta^r_\ep(t)=x^r_-(t)+\ep.
\]
Notice that 
\[\begin{split}
f\big(\beta^r_\ep(t),\Lambda(rt)\big)< \dot\beta^r_\ep(t)\ &\Leftrightarrow\  f\big(x^r_-(t)+\ep,\Lambda(rt)\big)<\dot x^r_-(t)\\
&\Leftrightarrow\ \ep \partial_xf\big(x^r_-(t),\Lambda(rt)\big)+ O(\ep^2)<0,
\end{split}
\]
and analogously,
\[
\ \ f\big(\alpha^r_\ep(t),\Lambda(rt)\big)> \dot \alpha^r_\ep(t)  \ \Leftrightarrow\  -\ep \partial_xf\big(x^r_-(t),\Lambda(rt)\big)+ O(\ep^2)> 0.
\]
Both the inequalities at the right-hand side of the previous chain of implications are always true if $\ep>0$ is sufficiently small. Therefore, $x^r_-(t)$ is uniformly asymptotically stable if $0<r<\overline r$. 
\end{proof}
Next we study a bit more in depth the region of the phase space which asymptotically converges to $x^r_-$ if it is uniformly asymptotically stable. In particular, we show that, under assumption~\ref{H0}, if there is an unstable hyperbolic  equilibrium $X^u_+$ for the future limit-problem which belongs to the boundary of the basin of attraction of the considered stable hyperbolic equilibrium $X^s_+$ and the locally pullback attracting solution $x^r_-$ converges to $X^s_+$, then the locally pullback repelling solution associated to $X^u_+$ (provided by Proposition~\ref{prop:pbk-attr-rep}) determines a region of the phase space which converges to $x^r_-$ as $t\to\infty$.
\begin{prop}
\label{prop:go-up}
Under assumption~\ref{H0}, let $x^r_-$ be the locally pullback attracting solution provided by {\rm Proposition~\ref{prop:pbk-attr-rep}} and assume that $x^r_-$ end-point tracks the associated curve of quasi-static equilibria and it is uniformly asymptotically stable for some $r>0$. Assume $\dot x =f(x,\lambda_+)$ has also an unstable hyperbolic equilibrium $X^u_+<X^s_+$ (resp.~$X^s_+<X^u_+$) such that there are no other equilibria between $X^u_+$ and $X^s_+$. Considering the pullback repelling solution $x^r_+(\cdot)\in \mathcal{C}((\beta^r_+,\infty),\R^N)$ of~\eqref{eq:init-prob} provided by {\rm Proposition~\ref{prop:pbk-attr-rep}}, we have that for every $t_0>\beta^r_+$, if $x^r_+(t_0)<x_0<x^r_-(t_0)$ (resp.~$x^r_-(t_0)<x_0<x^r_+(t_0)$), then
\begin{equation*}
 |x(t,t_0,x_0)-x^r_-(t)|\to0 \quad\text{ as $t\to\infty$.}
\end{equation*}
\end{prop}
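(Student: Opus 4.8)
Fix $t_0>\beta^r_+$ and $x_0$ with $x^r_+(t_0)<x_0<x^r_-(t_0)$, and set $y(t):=x(t,t_0,x_0)$. The plan is to use that solutions of the scalar problem \eqref{eq:init-prob} cannot cross, so that $y$ stays trapped between $x^r_+$ and $x^r_-$, and then to pin down the forward limit set of $y$ from the asymptotically autonomous structure of \eqref{eq:init-prob} together with the uniform asymptotic stability of $x^r_-$. I will only treat the case $X^u_+<X^s_+$; the case $X^s_+<X^u_+$ is symmetric. First I would observe that $y$ is defined on all of $[t_0,\infty)$ (recall that $f$ is bounded, by~\ref{H0}), and that uniqueness of solutions forces $x^r_+(t)<x^r_-(t)$ for all $t\in(\beta^r_+,\infty)$: their graphs cannot meet, and they are ordered this way for large $t$ since $x^r_+(t)\to X^u_+<X^s_+$ while $x^r_-(t)\to X^s_+$. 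The same no-crossing argument, starting from $x^r_+(t_0)<x_0<x^r_-(t_0)$, gives $x^r_+(t)<y(t)<x^r_-(t)$ for every $t\ge t_0$, and letting $t\to\infty$ yields $X^u_+\le\liminf_{t\to\infty}y(t)\le\limsup_{t\to\infty}y(t)\le X^s_+$; in particular $y$ is bounded.

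The step I expect to be the crux is ruling out $y(t)\to X^u_+$. Suppose it holds, and set $w:=y-x^r_+$, which is strictly positive on $[t_0,\infty)$ by no-crossing. By the mean value theorem, $\dot w(t)=\partial_xf\big(\xi(t),\Lambda(rt)\big)\,w(t)$ for some $\xi(t)$ between $x^r_+(t)$ and $y(t)$. Since $x^r_+(t),y(t)\to X^u_+$ and $\Lambda(rt)\to\lambda_+$, and since $X^u_+$ is an \emph{unstable} hyperbolic equilibrium of $\dot x=f_+(x)$, i.e.\ $\partial_xf(X^u_+,\lambda_+)=f_+'(X^u_+)>0$, continuity of $\partial_xf$ provides $\gamma>0$ and $T_1\ge t_0$ with $\partial_xf\big(\xi(t),\Lambda(rt)\big)\ge\gamma$ for all $t\ge T_1$. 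A Gronwall estimate then gives $w(t)\ge w(T_1)\,\txte^{\gamma(t-T_1)}\to\infty$, contradicting the boundedness of $w$. Hence $y(t)\not\to X^u_+$.

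Next I would prove $\limsup_{t\to\infty}y(t)=X^s_+$. Set $L:=\limsup_{t\to\infty}y(t)\in[X^u_+,X^s_+]$ and pick $t_n\to\infty$ with $y(t_n)\to L$. By the translation identity $x(t,t_0,x_0)=x(t-t_0,f_{t_0},x_0)$, the cocycle property and uniqueness, $y(t_n+s)=x\big(s,f_{t_n},y(t_n)\big)$ for every $s\ge0$, where $f_{t_n}$ denotes the time-$t_n$ shift of the right-hand side of \eqref{eq:init-prob}. As observed in the proof of Proposition~\ref{prop:hyperbolic}, $f_{t_n}\to f_+$ in $\mathcal{C}$, because $\Lambda(r(\cdot+t_n))\to\lambda_+$ uniformly on compact sets; moreover $x(\cdot,f_+,L)$ is defined for all $s\ge0$, since it stays in $[X^u_+,X^s_+]$ (there is no equilibrium of $f_+$ strictly between $X^u_+$ and $X^s_+$). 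Lemma~\ref{lem:kamke} then gives $y(t_n+s)\to x(s,f_+,L)$ for each fixed $s\ge0$; since $\limsup_{t\to\infty}y(t)=L$, passing to the limit yields $x(s,f_+,L)\le L$ for all $s\ge0$. But $f_+>0$ on $(X^u_+,X^s_+)$ (as $f_+'(X^u_+)>0$, $f_+'(X^s_+)<0$ and $f_+$ has no zero in between), hence $x(s,f_+,L)>L$ for small $s>0$ unless $L\in\{X^u_+,X^s_+\}$. Since $L=X^u_+$, together with $\liminf_{t\to\infty}y(t)\ge X^u_+$, would force $y(t)\to X^u_+$, which was excluded, we conclude $L=X^s_+$.

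Finally I would conclude using the hypothesis that $x^r_-$ is uniformly asymptotically stable; let $b>0$ be the associated attraction radius. Since $\limsup_{t\to\infty}y(t)=X^s_+$ and $x^r_-(t)\to X^s_+$, there is $s_\ast>t_0$ with $|y(s_\ast)-x^r_-(s_\ast)|<b$; as $y(t)=x(t,s_\ast,y(s_\ast))$ by the cocycle property, uniform asymptotic stability of $x^r_-$ yields $|y(t)-x^r_-(t)|\to0$ as $t\to\infty$, which is the assertion. The main obstacle is the second step: turning the unstable hyperbolicity of $X^u_+$ into a genuine contradiction with $y\to X^u_+$. It is essential there that $x^r_+$ is an actual solution lying strictly below $y$, so that $w=y-x^r_+$ satisfies an exact linear equation with a coefficient that is eventually bounded below by a positive constant. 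A minor additional point is the verification that the Kamke limit $x(s,f_+,L)$ is globally defined forward in time, which again rests on the absence of intermediate equilibria of $f_+$.
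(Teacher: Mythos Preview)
Your proof is correct. The overall architecture matches the paper's: sandwich $y$ between $x^r_+$ and $x^r_-$, use the asymptotically autonomous structure via Lemma~\ref{lem:kamke} to push $y$ close to $X^s_+$ along a subsequence, and then invoke the uniform asymptotic stability of $x^r_-$.

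The difference lies in how the possibility that $y$ hangs up near $X^u_+$ is eliminated. The paper works with $\underline x=\liminf_{t\to\infty}y(t)$ and asserts $X^u_+<\underline x$ directly from Proposition~\ref{prop:pbk-attr-rep} (namely, the fact that $x^r_+$ is the \emph{only} trajectory that remains in a neighbourhood of $X^u_+$ for all large $t$); it then uses that the autonomous solution $x(\cdot,f_+,\underline x)$ starting in $(X^u_+,X^s_+]$ flows to $X^s_+$ to produce a time at which $y$ is $\ep$-close to $x^r_-$. You instead work with $L=\limsup_{t\to\infty}y(t)$, first rule out $y(t)\to X^u_+$ by an explicit Gronwall estimate on $w=y-x^r_+$ using $\partial_xf(X^u_+,\lambda_+)>0$, and then force $L=X^s_+$ from the sign of $f_+$ on $(X^u_+,X^s_+)$ together with the inequality $x(s,f_+,L)\le L$ obtained from Kamke. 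Your route is a bit longer but more self-contained: it does not rely on the uniqueness clause of Proposition~\ref{prop:pbk-attr-rep}, and it makes transparent exactly where the instability of $X^u_+$ enters. The paper's route is shorter but leans on that uniqueness statement at a point where the implication is stated rather tersely.
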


\begin{proof}
We shall complete the proof for the case $X^u_+<X^s_+$, the other case being similar.  Let us fix $\ep>0$ and consider $t_\ep>0$ such that
\[
|X^s_+- x^r_-(t)|<\frac{\ep}2,\quad\text{for all }t\ge t_\ep.
\]
Notice also that, chosen $(x_0,t_0)\in\R^2$  as in the assumptions, the solution $x(t,t_0,x_0)$ of~\eqref{eq:init-prob} is defined for all $t>t_0$ since
\[
x^r_+(t)<x(t,t_0,x_0)<x^r_-(t),\quad\text{for all }t>t_0.
\]
Denote by $\underline x=\liminf_{t\to\infty} x(t,t_0,x_0)$ and consider a sequence $(t_n)_\nti$, realizing $\underline x=\lim_\nti x(t_n,t_0,x_0)$. Due to Proposition~\ref{prop:pbk-attr-rep} we have that $X^u_+<\underline x\le X^s_+$. Therefore, for the fixed $\ep>0$, we have that there is $s>t_\ep$ such that 
\[
\big| x \big(s,f_+,\underline x\big)-X^s_+\big|<\frac{\ep}{4}.
\]
On the other hand, recall that $f_{t_n}\to f_+$ as $n\to\infty$. Therefore, from Lemma~\ref{lem:kamke}, for the fixed  $s>t_\ep$ we have that there is $n_\ep\in\N$ such that for all $n\in\N$ with $n>n_\ep$,
\[
  \big|x\big(s,f_{t_n},x(t_n,t_0,x_0)\big)-x\big(s,f_+,\underline x\big)\big|<\frac{\ep}{4}.
\]
Then, using the cocycle property and the triangular inequality, one has that
\[
\begin{split}
|x(t_n+s,t_0,x_0)-x^r_-(t_n+s)|\le
|x(t_n+s,t_0,x_0)-X^s_+|+|X^s_+-x^r_-(t_n+s)|&\\
\le \big|x\big(s,f_{t_n},x(t_n,t_0,x_0)\big)-x\big(s,f_+,\underline x\big)\big|
+ \big|x\big(s,f_+,\underline x \big)-X^s_+\big|+\frac{\ep}{2}<\ep.&    
\end{split}
\]
Finally, since $x^r_-$ is assumed to be uniformly asymptotically stable and from the arbitrariness on $\ep>0$, we obtain the desired conclusion.
\end{proof}

The following result characterizes the occurrence of a rate-induced tipping with the loss of uniform asymptotic stability of a locally pullback attracting solution limiting at a stable hyperbolic equilibrium of the past limit-problem (see Proposition~\ref{prop:hyperbolic}). Many works have shown that a rate-induced tipping coincides with a collision with an orbit which is defined up to $+\infty$ and limits at a repelling solution of the future limit-problem~\cite{paper:WXJ, paper:Ca, paper:LNOR}. This result contains also this fact  in the context of scalar differential problems using standard techniques for non-autonomous dynamical systems.  

\begin{thm}\label{thm:fundamentals}
Consider assumption~\ref{H0} and denote 
\[
r^*:=\sup\left\{r>0\mid
  x^\rho_-(\cdot) \text{ is uniformly asymptotically stable  for all }0<\rho\le r
\right\}\!.
\]
Then, $x^r_-$ end point tracks the respective curve of quasi-static equilibria $X^s$ for all  $0<r<r^*$. Moreover, the following statements are equivalent.
\begin{itemize}[leftmargin=20pt,itemsep=2pt]
\item[\rm (i)] $r^*<\infty$;
\item[\rm (ii)] $x^{r^{\scaleto{*}{3pt}}}_-$ is globally defined but not uniformly asymptotically stable and it does not end-point track $X^s$. In particular, the system~\eqref{eq:init-prob} undergoes a rate-induced tipping at $r=r^*$;
\item[\rm (iii)] there is an unstable hyperbolic equilibrium $X^u_+$ of $\dot x =f(x,\lambda_+)$ and a value $r^*>0$ such that the  pullback repelling solution $x^{r^{\scaleto{*}{3pt}}}_+$ of {\rm~\eqref{eq:init-prob}} associated to  $X^u_+$ (see {\rm Proposition~\ref{prop:pbk-attr-rep}}) satisfies $x^{r^{\scaleto{*}{3pt}}}_-(t)=x^{r^{\scaleto{*}{3pt}}}_+(t)$ for all $t\in\R$.
\end{itemize}
\end{thm}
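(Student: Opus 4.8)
The plan is to establish the ``end-point tracking'' claim first, then close the loop (i)$\Rightarrow$(iii)$\Rightarrow$(ii)$\Rightarrow$(i). The first sentence is immediate from Proposition~\ref{prop:hyperbolic}: for each $0<r<r^*$ the solution $x^r_-$ is uniformly asymptotically stable by definition of $r^*$, and Proposition~\ref{prop:hyperbolic} shows that uniform asymptotic stability of $x^r_-$ forces it to be globally defined and to end-point track $X^s$---indeed one should be a little careful here and argue that the supremum defining $r^*$ coincides with (or is bounded by) the $\overline r$ of Proposition~\ref{prop:hyperbolic}, so that the conclusion of that proposition applies on the whole interval $(0,r^*)$.

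For (i)$\Rightarrow$(iii): assume $r^*<\infty$. I would first argue that $x^{r^*}_-$ is still globally defined---this uses the normally-hyperbolic-invariant-manifold persistence from the proof of Proposition~\ref{prop:hyperbolic} together with a limiting/closedness argument as $r\uparrow r^*$ (the invariant manifold $(\widetilde x^r(t),rt)$ persists and stays in an $\ep$-tube of $(X^s(rt),rt)$, and one passes to the limit using Kamke's lemma, Lemma~\ref{lem:kamke}). Next, $x^{r^*}_-$ cannot be uniformly asymptotically stable: if it were, then by the characterization in Proposition~\ref{prop:asympt-stability} (applied to the equation of perturbed motion around $x^{r^*}_-$, whose limiting equations are governed by $\partial_x f(\cdot,\lambda_\pm)$) uniform asymptotic stability is an open condition in $r$, contradicting maximality of $r^*$. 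Since $x^{r^*}_-$ is globally defined but not uniformly asymptotically stable, Proposition~\ref{prop:hyperbolic} (in its contrapositive form: $r^*\le\overline r$ would give uniform asymptotic stability) forces $r^*\ge\overline r$ and in fact shows that $\partial_x f(x^{r^*}_-(t),\Lambda(r^*t))$ must reach $0$ somewhere; combined with the scalar structure and the fact that $x^{r^*}_-\to X^s_-$ in the past but cannot converge to $X^s_+$ (otherwise Proposition~\ref{prop:suff-end-point} would give end-point tracking, hence---via persistence of the hyperbolic solution $X^s_+$, cf.~the proof of Proposition~\ref{prop:suff-end-point}---uniform asymptotic stability), one deduces that $\omega$-limit behaviour of $x^{r^*}_-$ must involve an unstable equilibrium $X^u_+$ of the future limit-problem. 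Here the scalar (one-dimensional) setting is essential: a bounded solution on $\R$ whose forward limit set is not the stable equilibrium must limit onto a single equilibrium of $f_+$, and monotonicity of the autonomous scalar flow between consecutive equilibria forces that equilibrium to be unstable. By the uniqueness clause of Proposition~\ref{prop:pbk-attr-rep} for the locally pullback repelling solution $x^{r^*}_+$ attached to $X^u_+$, and the fact that $x^{r^*}_-$ is the \emph{only} trajectory with $x^{r^*}_-(t)\to X^u_+$ as $t\to\infty$ remaining in a neighbourhood $V$ of $X^u_+$, we conclude $x^{r^*}_-\equiv x^{r^*}_+$ on $\R$.

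For (iii)$\Rightarrow$(ii): if $x^{r^*}_-=x^{r^*}_+$ on $\R$, then $x^{r^*}_-$ is globally defined and $x^{r^*}_-(t)\to X^u_+$ as $t\to\infty$, so it does not end-point track $X^s$ (it limits at an unstable, not the expected stable, equilibrium); it cannot be uniformly asymptotically stable, since a uniformly asymptotically stable solution of a scalar asymptotically autonomous equation has a whole one-sided neighbourhood of nearby solutions converging to it, which is incompatible with $x^{r^*}_-$ coinciding with a locally pullback \emph{repelling} solution (use Proposition~\ref{prop:go-up}: on one side of $x^{r^*}_+=x^{r^*}_-$ orbits are attracted, but on the other side, by the repelling property, nearby orbits leave every fixed neighbourhood as $t\to\infty$). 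Finally one checks the tipping definition, Definition~\ref{def:R-tipping}: for $0<r<r^*$ end-point tracking holds by the first part, and at $r=r^*$ it fails, so~\eqref{eq:init-prob} undergoes a rate-induced tipping at $r=r^*$. The implication (ii)$\Rightarrow$(i) is trivial, since (ii) explicitly asserts $r^*$ is a finite value at which the stated failure occurs, so $r^*<\infty$.

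The main obstacle will be the heart of (i)$\Rightarrow$(iii): showing that at the critical rate the globally defined solution $x^{r^*}_-$ actually forward-converges to an unstable equilibrium of $f_+$, rather than, say, remaining globally defined and oscillating or converging to $X^s_+$ without being uniformly asymptotically stable. Ruling out convergence to $X^s_+$ is exactly where Proposition~\ref{prop:suff-end-point} is needed (end-point tracking would, by persistence of the hyperbolic solution $X^s_+$, reinstate uniform asymptotic stability and contradict maximality of $r^*$), and ruling out non-convergent forward behaviour is where the scalar structure (every bounded forward orbit limits onto an equilibrium of the future limit-problem, via the Poincaré--Bendixson-type triviality of one-dimensional flows together with the asymptotically autonomous structure) does the work. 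Tying these threads together cleanly---in particular making the ``openness of uniform asymptotic stability in $r$'' argument rigorous through Proposition~\ref{prop:asympt-stability} and the uniform dichotomy estimates---is the step that requires the most care.
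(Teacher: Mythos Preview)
Your proposal has the right overall architecture, but there is a genuine gap in the very first step, and it propagates.

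\textbf{The first assertion.} Proposition~\ref{prop:hyperbolic} does \emph{not} prove ``uniform asymptotic stability of $x^r_-$ implies end-point tracking''. It proves that there exists $\overline r>0$ (defined via the sign of $\partial_x f$ along $x^r_-$) such that for $0<r<\overline r$ all three conclusions hold simultaneously. Since Proposition~\ref{prop:hyperbolic} already yields uniform asymptotic stability on $(0,\overline r)$, the definition of $r^*$ gives $\overline r\le r^*$, not the reverse; your suggestion that one ``argue that $r^*$ coincides with (or is bounded by) $\overline r$'' would require $r^*\le\overline r$, which is neither stated nor true in general. The paper closes this gap with a separate argument: for $r\in[\overline r,r^*)$ one looks at $\underline X^r=\liminf_{t\to\infty}x^r_-(t)$ and $\overline X^r=\limsup_{t\to\infty}x^r_-(t)$, shows by contradiction (via continuous variation in $r$ and the existence of the locally pullback repelling solution $x^r_+$) that both must lie in the basin of attraction of $X^s_+$, and then concludes $\underline X^r=\overline X^r=X^s_+$ as in~\eqref{eq:15/09-17:00}. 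This argument is not long, but it is not a consequence of Proposition~\ref{prop:hyperbolic}.

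\textbf{Global definedness of $x^{r^*}_-$.} Your route via normally hyperbolic persistence does not work here: that persistence is only guaranteed for small $r$, not up to $r^*$. The paper's argument is simpler and depends on the first assertion: once end-point tracking is known for all $r<r^*$, the solutions $x^r_-$ are uniformly bounded, and Lemma~\ref{lem:kamke} gives global definedness of $x^{r^*}_-$ directly. So the gap above feeds into this step as well.

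\textbf{Order of implications and the forward limit.} The paper runs (i)$\Rightarrow$(ii)$\Rightarrow$(iii)$\Rightarrow$(i); you run (i)$\Rightarrow$(iii)$\Rightarrow$(ii)$\Rightarrow$(i). Either order is fine, but your handling of the forward limit is too quick. It is not automatic in the asymptotically autonomous setting that a bounded solution converges to a single equilibrium of $f_+$; one must exclude the possibility that $\liminf$ and $\limsup$ differ. The paper treats $\underline X^{r^*}$ and $\overline X^{r^*}$ separately and uses Proposition~\ref{prop:suff-end-point} to rule out the scenario where the orbit repeatedly visits any neighbourhood of $X^s_+$ (which would force end-point tracking, hence uniform asymptotic stability via Proposition~\ref{prop:asympt-stability}, contradicting the maximality of $r^*$). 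Your remark that ``$\partial_x f(x^{r^*}_-(t),\Lambda(r^*t))$ must reach $0$ somewhere'' is not needed and does not appear in the paper's argument.

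In short: the equivalence cycle is essentially what the paper does, and your (iii)$\Rightarrow$(ii) and openness-of-stability arguments are correct in spirit (the paper cites \cite[Theorem X.5.2]{book:HALE} for the latter). But the opening claim---end-point tracking on all of $(0,r^*)$---needs its own proof, and the one you sketch does not supply it.
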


\begin{proof}
From Proposition~\ref{prop:hyperbolic}, we already know that $\lim_{t\to\infty}x^r_-(t)=X^s_+$ for all $0<r<\overline r$. If $r^*=\overline r$ we already have the first assertion proved. Otherwise, consider $r\in[\overline r,r^*)$ and let
\begin{equation}\label{eq:16/09-17:52}
\underline X\vphantom{X}^r:=\liminf_{t\to \infty} x^r_-(t)\le \smash{\overline X}\vphantom{X}^r:=\limsup_{t\to \infty} x^r_-(t).
\end{equation}
If we prove that both $\underline X\vphantom{X}^r$ and $\smash{\overline X}\vphantom{X}^r$ must belong to the basin of attraction of $X^s_+$ for all $0<r<r^*$, then  reasoning as for~\eqref{eq:15/09-17:00}, we immediately obtain that $\underline X\vphantom{X}^r=\smash{\overline X}\vphantom{X}^r=X^s_+$ which is the claimed assertion. By contradiction, assume that this is not true and there is $r_0\in[\overline r,r^*)$ and a sequence $(t_n)_\nin$, $t_n\xrightarrow{\nti}\infty$ such that $\underline X\vphantom{X}^{r_0}=\lim_{\nti}x^{r_0}_-(t_n)$ does not belong to the basin of attraction of $X^s_+$. Then, due to~\ref{H0}, there must be an unstable hyperbolic equilibrium $X^u_+$ in between $\underline X\vphantom{X}^{r_0}$ and $X^s_+$ for the future limit-problem $\dot x=f_+(x)$. For the sake of notation, let us assume  that $\underline X\vphantom{X}^{r_0}\le X^u_+<X^s_+$ (the other case being similar), and denote by $x^{r_0}_+$ the locally pullback repelling solution of~\eqref{eq:init-prob} provided by Proposition~\ref{prop:pbk-attr-rep}.  Then, one has that, up to considering $n\in\N$ sufficiently big, $x^r_+(t_n)<x^r_-(t_n)$ for all $0<r<\overline r$,  and  $x^{r_0}_+(t_n)\ge x^{r_0}_-(t_n)$. Due to the continuous variation of the solution there must be $r_1\in [\overline r, r_0]$ such that  $x^{r_1}_+(t_n)=x^{r_1}_-(t_n)$. But this is in contradiction with the fact that $x^{r}_-$ is uniformly asymptotically stable for all $0<r<r^*$ because, by Proposition~\ref{prop:pbk-attr-rep}, $x^{r}_+$ is locally pullback repelling for all $r>0$. An analogous reasoning holds true for $\smash{\overline X}\vphantom{X}^r$. Hence, both $\underline X\vphantom{X}^r$ and $\smash{\overline X}\vphantom{X}^r$ belong to the basin of attraction of $X^s_+$ for all $0<r<r^*$, and, in particular, coincide with $X^s_+$ (see~\eqref{eq:15/09-17:00}), which ends the proof of the first assertion.\par\smallskip

 (i) $\Rightarrow$ (ii).  First of all, notice that due to Lemma~\ref{lem:kamke}, $x^{r^{\scaleto{*}{3pt}}}_-$ must be globally defined.  Moreover,  due to assumption~\ref{H0}, $f\big(x,\Lambda(rt)\big)$ is locally Lipschitz-continuous in $x$ uniformly in $t\in\R$. Therefore, $x^{r^{\scaleto{*}{3pt}}}_-$ can not be uniformly asymptotically stable or else  $x^{r^{\scaleto{*}{3pt}}+\delta}_-$ would be so for $\delta>0$ sufficiently small (see~\cite[Theorem X.5.2]{book:HALE}), and this is in contradiction with the definition of $r^*$. On the other hand, this implies that $x^{r^{\scaleto{*}{3pt}}}_-$ does not end-point track $X^s$, otherwise this would contradict Proposition~\ref{prop:asympt-stability}. 
 \par\smallskip
 
 (ii) $\Rightarrow$ (iii). Using the notation of~\eqref{eq:16/09-17:52}, we have that either $\underline X\vphantom{X}^{r^{\scaleto{*}{3pt}}}$, $\smash{\overline X}\vphantom{X}^{r^{\scaleto{*}{3pt}}}$, or both must not belong to the basin of attraction of $X^s_+$. Otherwise reasoning as for~\eqref{eq:15/09-17:00}, we obtain that $\underline X\vphantom{X}^{r^{\scaleto{*}{3pt}}}\!=\smash{\overline X}\vphantom{X}^{r^{\scaleto{*}{3pt}}}\!=X^s_+$ which is in contradiction with the assumptions in~(ii). Therefore, in agreement with~\ref{H0}, there must be an unstable hyperbolic equilibrium $X^u_+$ for $\dot x= f_+(x)$ such that the only one of the following two cases is possible
 \begin{equation}\label{eq:03/10-17:24}
 \smash{\overline X}\vphantom{X}^{r^{\scaleto{*}{3pt}}}\ge X^u_+>X^s_+,\quad\text{ or }\quad X^s_+> X^u_+\ge\underline X\vphantom{X}^{r^{\scaleto{*}{3pt}}}.
\end{equation}
  Let us assume the latter (the other case being similar) and  denote by $x^{r^{\scaleto{*}{3pt}}}_+$ the locally pullback repelling solution of~\eqref{eq:init-prob} associated to $X^u_+$ at $r=r^*$  (see Proposition~\ref{prop:pbk-attr-rep}). Thanks to Lemma~\ref{lem:kamke}, it is easy to deduce that it must be $x^{r^{\scaleto{*}{3pt}}}_-(t)\ge x^{r^{\scaleto{*}{3pt}}}_+(t)$ for all $t\in\R$ where they are both defined. 

Consequently, and due also to~\eqref{eq:03/10-17:24}, it must be $\underline X\vphantom{X}^{r^{\scaleto{*}{3pt}}}=X^u_+$. We shall now prove that also $ \smash{\overline X}\vphantom{X}^{r^{\scaleto{*}{3pt}}}= X^u_+<$. Let us assume that this were not true. Then, two cases are possible: either $ \smash{\overline X}\vphantom{X}^{r^{\scaleto{*}{3pt}}}$ belongs to the basin of attraction of $X^s_+$ (and therefore reasoning as in ~\eqref{eq:15/09-17:00}, we would have that $\smash{\overline X}\vphantom{X}^{r^{\scaleto{*}{3pt}}}=X^s_+$) or there is a second unstable equilibrium $Y^u_+$  for $\dot x= f_+(x)$ such that $\smash{\overline X}\vphantom{X}^{r^{\scaleto{*}{3pt}}}\ge Y^u_+>X^s_+$. 
Either way, for every $\ep>0$, there is a sequence $(t_n)_\nin$, $t_n\to\infty$ such that $|x^{r^{\scaleto{*}{3pt}}}_-(t_n)-X^s_+|<\ep$. Then, by Proposition~\ref{prop:suff-end-point} one would have that $x^{r^{\scaleto{*}{3pt}}}_-(t)\to X^s_+$ as $t\to\infty$ which is in contradiction with~\eqref{eq:03/10-17:24}. Therefore, it must be $ \smash{\overline X}\vphantom{X}^{r^{\scaleto{*}{3pt}}}= X^u_+$ and thus $x^{r^{\scaleto{*}{3pt}}}_-(t)\to X^u_+$ as $t\to \infty$. However, in this case, due to Proposition~\ref{prop:pbk-attr-rep}, we have that $x^{r^{\scaleto{*}{3pt}}}_-= x^{r^{\scaleto{*}{3pt}}}_+$ as claimed.
\par\smallskip
(iii) $\Rightarrow$ (i) is trivial.
\end{proof}

\section{Asymptotic series expansions and rate-induced tipping}\label{sec:asympt_series_exp}

The role played by the (locally) pullback attractive and repelling solutions $x^r_-$ and $x^r_+$ in the phenomenology of a rate-induced tipping has been stressed in several works~(see~\cite{paper:APW, paper:LNOR,paper:WXJ} and references therein).  In this section, we provide a method of approximation of  $x^r_-$ and $x^r_+$ via asymptotic series expansions (in the sense of Definition~\ref{def:asymptoticseries} below), which, to our knowledge, has not been used in the context of rate-induced tipping. Nevertheless, we show that such approximations, which are always numerically calculable, also provide valuable information on the occurrence or absence of rate-induced tipping for scalar differential problems. On the one hand, we immediately obtain a sufficient condition for end-point tracking in Proposition~\ref{prop:suff-end-point} if the series approximation holds on the whole real line and the relative error is suitably bounded although possibly not small. Theorem~\ref{thm:melnikov}, on the other hand, provides a characterization of the occurrence of a rate-induced tipping as well as a asymptotic approximation of the tipping value if the asymptotic series expansions are reliable only on half-lines (in the sense of Proposition~\ref{prop:expansions}) but the solutions $x^r_-$ and $x^r_+$ are, respectively, locally pullback attractive and repelling at least up to $t=0$.  At the end of the section, we sum up the advantages and limitations offered by our results.

\begin{defn}
\label{def:asymptoticseries}
Consider $\cI \subseteq\R$, $\ep_0>0$ and a function $v:\cI\times [0,\ep_0)\to\R$. If for all $\nin$ there is $\delta_n\colon\R^+\to\R^+$ continuous and vanishing at zero, a constant $C_n>0$ and a function $a_n:\cI\to\R$, such that
\[
\Big\|v(t,\ep)-\sum_{i=0}^na_i(t)\delta_i(\ep)\Big\|_{\mathcal{C}(\cI,\R)}\!\!<C_n\delta_n(\ep),\quad\text{then}\quad v(t,\ep)\sim\sum_{i=0}^{\infty}a_i(t)\delta_i(\ep),
\]
is called an \emph{asymptotic series of} $v$.
\end{defn}

It is important to keep in mind that an asymptotic series is in general not convergent in the classical sense. The term asymptotic suggests that the approximation provided by a finite sum of its elements becomes accurate as $\ep\to0$.\par\smallskip

Besides the assumptions~\ref{H0} considered in Section~\ref{sec:prelim}, in the following we shall assume also that: \par\smallskip

\setword{\upshape(\textbf{H1})}{H1} the  future limit-problem $\dot x=f(x,\lambda_+)$ has an unstable hyperbolic fixed point $X^u_+$ such that, upon the variation on $ \lambda$, $X^u_+$ is (continuously) perturbed into a hyperbolic fixed point $X^u_\lambda$ of $\dot x=f(x,\lambda)$, with $ \lambda\in [\lambda_-,\lambda_+]$. Again, we simplify the notation at $\lambda=\lambda_-$ by writing $X^{u}_-:=X^{u}_{\lambda_-}$. We shall call the function $X^u:[\lambda_-,\lambda_+]\to\R$ defined by 
\[
\lambda\mapsto X^u(\lambda)=X^u_\lambda,
\]
the family of \emph{quasi-static unstable equilibria} associated to $X^u_+$. Moreover, assume that, for all $ \lambda\in [\lambda_-,\lambda_+]$, there are no further fixed points in the open interval of  end-points  $X^u(\lambda)$ and $X^s(\lambda)$. In particular, we will assume that there exists $d_0>0$ such that given any pair of curves of quasi-static equilibria $X,Y:[\lambda_-,\lambda_+]\to\R$,  if $X\neq Y$,  then
\[
\min_{\lambda\in[\lambda_-,\lambda_+]}|X(\lambda)-Y(\lambda)|>d_0.
\]
From the previous assumptions one immediately has that for all $\lambda\in[\lambda_-,\lambda_+]$, either $X^s(\lambda)<X^u(\lambda)$ or $X^s(\lambda)>X^u(\lambda)$. We will assume the latter (the other case being similar). \par\smallskip

 We advice the reader that for the upcoming results, anytime \ref{H1}  is in force, so is \ref{H0}. This justifies the absence of assumptions on the regularity of $f$ in \ref{H1}. We shall see, however, that the results in this section will require that the considered vector field admits partial derivatives of any order with respect to $x$.
\par\smallskip
The following result shows that, under appropriate asssumptions, the locally pullback solutions of~\eqref{eq:init-prob} provided by \rm Proposition~\ref{prop:pbk-attr-rep} can be written as asymptotic series. 

\begin{prop}
\label{prop:expansions}
Consider the problem {\rm~\eqref{eq:init-prob}}, with its relative assumptions~\ref{H0}. Assume also that $f$  admits partial derivatives of any order with respect to $x$ and they are bounded along $\big(X^s(\tau),\Lambda(\tau)\big)$, $\tau\in \R$. The following statements are true.
\begin{itemize}[leftmargin=18pt,itemsep=2pt]
\item[\rm (i)]There exists a sequence $(a_m^{s})_{n\in\N}$ of bounded continuous functions from $\R$ into itself, and sequences  $(C^s_n)_\nin$ and $(\overline r^s_n)_\nin$ of positive real numbers, such that for every fixed $\nin$, if  $0<r<\overline r^s_n$,
\begin{equation}\label{eq:approx_are_close}
\Big\|\sum_{i=0}^n a_i^s(r\,\cdot\,)r^i -x^r_-(\cdot)\Big\|_{\mathcal{C}(\R)}\!\!\!<C^s_nr^{n+1},
\end{equation}
that is, the solution $x^r_-$ of~\eqref{eq:multisc-slow} provided by {\rm Proposition~\ref{prop:pbk-attr-rep}} can be written as asymptotic series. In particular, one has that
\begin{equation*}
x^{r}_-(t)\sim X^s(rt)+\sum_{i=1}^\infty a_i^s(rt)r^i, 
\end{equation*}
where all the coefficients $a_i^{s}(\cdot)$ can be calculated using function values and derivatives of $f$ and $\Lambda$. Moreover, for any fixed $\ep>0$, $\nin$ and $r>0$ there exists $\beta=\beta(\ep,n,r)\in\R\cup\{\infty\}$ so that
\begin{equation}\label{eq:approx-half-lines1}
\Big\|\sum_{i=0}^n a_i^s(r\,\cdot\,)r^i -x^r_-(\cdot)\Big\|_{\mathcal{C}((-\infty,\beta))}<\ep
\end{equation}
In particular for all $\nin$, $\beta(\ep,n,r)=\infty$ whenever $r<\min\big\{\overline r^s_n,\sqrt[n+1]{\ep/C^s_n}\big\}$.

\item[\rm (ii)] If additionally {\rm~\eqref{eq:init-prob}} satisfies also~\ref{H1} and all the partial derivatives of $f$ with respect to $x$ are bounded along  $\big(X^u(\tau),\Lambda(\tau)\big)$, $\tau\in \R$,  then an analogous result is valid for $x^r_+$. That is, there is a sequence $(a_n^{u})_{n\in\N}$ of bounded continuous functions from $\R$ into itself,  and sequences  $(C^u_n)_\nin$ and $(\overline r^u_n)_\nin$ of positive real numbers such that for every fixed $\nin$, if  $0<r<\overline r^u_n$,
\[
\ \quad\Big\|\sum_{i=0}^n a_i^u(r\,\cdot\,)r^i -x^r_+(\cdot)\Big\|_{\mathcal{C}(\R)}\!\!\!<C^u_nr^{n+1}
\quad\text{and}\quad 
x^{r}_+(t)\sim X^u(rt)+\sum_{i=0}^\infty a_i^u(rt)r^i,
\]
where all the coefficients $a_i^{u}(\cdot)$ can be calculated using function values and derivatives of $f$ and $\Lambda$. Moreover, for any fixed $\ep>0$, $\nin$ and $r>0$ there exists $\alpha=\alpha(\ep,n,r)\in\R\cup\{-\infty\}$  such that
\begin{equation}\label{eq:approx-half-lines}
\Big\|\sum_{i=0}^n a_i^u(r\,\cdot)r^i -x^r_+(\cdot)\Big\|_{\mathcal{C}((\alpha,\infty))}<\ep.
\end{equation}
In particular, for all $\nin$, $\alpha(\ep,n,r)=-\infty$  whenever  $r< \min\big\{\overline r^u_n,\sqrt[n+1]{\ep/C^u_n}\big\}$.
\end{itemize}
\end{prop}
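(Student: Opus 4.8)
The plan is to build a formal power series in the rate $r$ by inserting the ansatz $\sum_{i\ge0}a^s_i(rt)\,r^i$ into \eqref{eq:init-prob}, read off a recursion for the coefficients, and then control the remainder \emph{uniformly in time} by exploiting the uniform contraction $\partial_xf\big(X^s(\tau),\Lambda(\tau)\big)<-p<0$. Concretely, writing $\tau=rt$ and noting $\frac{d}{dt}a^s_i(rt)=r\,(a^s_i)'(rt)$, the left side of \eqref{eq:init-prob} becomes $\sum_{m\ge1}(a^s_{m-1})'(\tau)\,r^m$, while Taylor-expanding $f(\cdot,\Lambda(\tau))$ around $a^s_0(\tau)$ contributes, at order $r^m$, the term $\partial_xf\big(a^s_0,\Lambda\big)a^s_m+P_m$, where $P_m$ is a universal polynomial in $a^s_1,\dots,a^s_{m-1}$ with coefficients $\tfrac1{k!}\partial_x^kf\big(a^s_0,\Lambda\big)$, $2\le k\le m$. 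Matching powers of $r$ gives $f\big(a^s_0(\tau),\Lambda(\tau)\big)=0$ at order $r^0$ — hence, since $\partial_xf<-p<0$ along the branch, $a^s_0$ is exactly the family of quasi-static stable equilibria $X^s$ — and, at order $r^m$ ($m\ge1$), the purely \emph{algebraic} relation
\[
a^s_m=\big(\partial_xf(a^s_0,\Lambda)\big)^{-1}\big((a^s_{m-1})'-P_m\big).
\]
Thus each $a^s_m$ is an explicit expression in the values and $x$-derivatives of $f$, the function $\Lambda$ and their derivatives; by induction it is continuous and bounded (using that $|\partial_xf(a^s_0,\Lambda)|^{-1}$ is bounded and that $X^s$ has bounded derivatives, obtained from \ref{H0} by implicit differentiation of $f(X^s(\lambda),\lambda)=0$), and since $(a^s_0)'(\tau)=(X^s)'(\Lambda(\tau))\,\Lambda'(\tau)\to0$ as $|\tau|\to\infty$, one checks inductively — using $\Lambda'\to0$ together with the decay of the finitely many lower-order coefficients that enter — that $a^s_m(\tau)\to0$ as $|\tau|\to\infty$ for every $m\ge1$.

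Next I would show the partial sum $\phi^r_n(t):=\sum_{i=0}^na^s_i(rt)r^i$ is a good approximate solution. By the previous paragraph $\phi^r_n$ stays within $O(r)$ of $X^s(rt)$ and $\phi^r_n(t)\to X^s_-$ as $t\to-\infty$. Taylor-expanding $f\big(\phi^r_n(t),\Lambda(rt)\big)$ in its first argument around $a^s_0(rt)$ to order $n$ with integral remainder, and using boundedness of the $x$-derivatives of $f$ on a fixed tube about the branch, all contributions through $r^n$ cancel against $\dot\phi^r_n=\sum_{m=1}^{n+1}(a^s_{m-1})'(rt)r^m$ by the recursion, leaving
\[
\dot\phi^r_n(t)-f\big(\phi^r_n(t),\Lambda(rt)\big)=g^r_n(t),\qquad \big\|g^r_n\big\|_{\mathcal{C}(\R)}\le C_n\,r^{n+1},
\]
for $r$ below an $n$-dependent threshold. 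To get \eqref{eq:approx_are_close} I would fix $r$ smaller than the $\overline r$ of Proposition~\ref{prop:hyperbolic}, so that $x^r_-$ is globally defined, bounded, within $O(r)$ of $X^s(rt)$ and tends to $X^s_-$ as $t\to-\infty$; shrinking the threshold one may also assume $\partial_xf\big(y,\Lambda(rt)\big)\le-p/2$ for all $y$ in the relevant $O(r)$-tube (uniform continuity of $\partial_xf$ on the compact set containing the branch). Setting $e(t):=x^r_-(t)-\phi^r_n(t)$ one has $e(t)\to0$ as $t\to-\infty$ and $\dot e=A(t)e+g^r_n(t)$ with $A(t):=\int_0^1\partial_xf\big(\phi^r_n(t)+\sigma e(t),\Lambda(rt)\big)\,d\sigma\le-p/2$; variation of parameters with vanishing data at $-\infty$ (legitimate since $e(-\infty)=0$ and the homogeneous propagator $\exp\int_s^tA$ decays) yields $e(t)=\int_{-\infty}^t\exp\!\big(\int_u^tA\big)g^r_n(u)\,du$, hence $\|e\|_{\mathcal{C}(\R)}\le\tfrac2pC_nr^{n+1}=:C^s_nr^{n+1}$. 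This is \eqref{eq:approx_are_close}, and rewriting it gives the asymptotic series in the sense of Definition~\ref{def:asymptoticseries} with $\delta_i(r)=r^i$.

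For the half-line statement \eqref{eq:approx-half-lines1} with \emph{arbitrary} $r>0$: since $\phi^r_n(t)$ and $x^r_-(t)$ both tend to $X^s_-$ as $t\to-\infty$ (the latter by Proposition~\ref{prop:pbk-attr-rep}, the former by the decay of the $a^s_i$ established above), their difference is $<\ep$ on some half-line $(-\infty,\beta(\ep,n,r))$; and if in addition $r<\min\{\overline r^s_n,\sqrt[n+1]{\ep/C^s_n}\}$, then \eqref{eq:approx_are_close} forces the difference $<\ep$ on all of $\R$, i.e.\ $\beta=\infty$. For part (ii), the same recursion applies verbatim with $X^u$ in place of $X^s$: this is legitimate because $\partial_xf(X^u_\lambda,\lambda)$ is continuous and never $0$ by hyperbolicity, hence of constant positive sign on $[\lambda_-,\lambda_+]$, so $\partial_xf(a^u_0,\Lambda)$ is bounded away from $0$ and the coefficients $a^u_i$ with $a^u_0=X^u$ are well defined. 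Since the linearization along $X^u$ is now a uniform \emph{expansion}, the closeness estimate is obtained by running the argument just sketched on the time-reversed problem of Proposition~\ref{prop:pbk-attr-rep} — in which $X^u_+$ is a \emph{stable} hyperbolic equilibrium of the past limit-problem, restoring the uniform contraction — on a range of $r$ for which $x^r_+$ is globally defined; translating back gives \eqref{eq:approx-half-lines} exactly as \eqref{eq:approx-half-lines1} was obtained.

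I expect the genuine difficulty to be precisely the step carried out for (i): upgrading the order-by-order cancellation to a bound that is uniform in $t\in\R$. This is where the standing sign conditions on $\partial_xf$ along the two branches are indispensable — they furnish the uniform exponential dichotomy which both makes the variation-of-parameters integral converge and keeps the constants $C^s_n$, $C^u_n$ finite over the infinite time interval; without it an $O(r^{n+1})$ local defect could be amplified over $\R$. Two secondary points need care: for each fixed order $n$ the coefficient $a^s_n$ costs $n$ derivatives of $X^s\circ\Lambda$, so enough (but, for each $n$, only finitely much) regularity of $\Lambda$ must be available; and one must know \emph{a priori}, via Proposition~\ref{prop:hyperbolic} (Fenichel/Eldering persistence), that $x^r_-$ itself lies in the $O(r)$-tube for small $r$, so that $A(t)\le-p/2$ holds along the comparison before the Gronwall-type estimate can be launched.
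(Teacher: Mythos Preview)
Your construction of the coefficients $a^s_i$ via the formal ansatz and the recursion is exactly what the paper does, and your treatment of the half-line statement \eqref{eq:approx-half-lines1} and of part (ii) by time reversal matches the paper as well. The genuine difference lies in how the uniform-in-$t$ error bound \eqref{eq:approx_are_close} is obtained. You treat $\phi^r_n$ as an approximate solution with defect $O(r^{n+1})$ and close via variation of parameters, using the uniform contraction $A(t)\le -p/2$ along the $O(r)$-tube (justified a priori by Proposition~\ref{prop:hyperbolic}). The paper instead exploits the scalar order structure directly: it sets $\gamma^r_{1,2}(t)=S^s_n(r,t)\mp C r^{n+1}$ and checks the differential inequalities $f(\gamma_1^r,\Lambda)>\dot\gamma_1^r$, $f(\gamma_2^r,\Lambda)<\dot\gamma_2^r$, so that the strip $\{\gamma_1^r\le x\le\gamma_2^r\}$ is positively invariant; since $x^r_-$ enters this strip near $t=-\infty$, it is trapped for all time. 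This comparison argument is more elementary, avoids the mild bootstrap you flag (no need to know in advance that $x^r_-$ stays in the tube), and delivers global existence of $x^r_-$ as a byproduct without invoking Proposition~\ref{prop:hyperbolic}. Your Gronwall route is equally valid and has the advantage that it is not tied to the scalar setting---it would carry over to $\R^N$ once one has an exponential dichotomy along the slow manifold---whereas the paper's sub/super-solution argument is specific to dimension one.
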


\begin{proof}
The proof uses several ideas presented in~\cite[Ch.~5]{book:K}. We shall prove (i) and omit the proof of (ii) because it is analogous. Before proceeding, let us highlight the multi-scale structure of the considered differential problem. Set $\tau=r t$  and increase the dimension of~\eqref{eq:init-prob}  by adding the equation $\txtd \tau/\txtd t=r$ as in~\eqref{eq:planar_system}. Then, on the slow time-scale, obtained via the change of variable $s=rt$, one has that~\eqref{eq:init-prob} can be rewritten as 
\begin{equation}\label{eq:multisc-slow}
\begin{array}{rcl}
r\frac{\txtd x}{\txtd s} &=&f\big(x,\Lambda(\tau)\big),\\
\frac{\txtd \tau}{\txtd s}&=&1.\\
\end{array}
\end{equation}
Now, consider a formal series solution of~\eqref{eq:multisc-slow}, of the form,
\[
\sum_{i=0}^\infty a_i(\tau)r^i\,,
\]
and plug it in~\eqref{eq:multisc-slow}. On the left-hand side, differentiate term-by-term, while on the right-hand side use the multidimensional Taylor formula 
\begin{equation}\label{eq:TaylorFormula}
F\Big(z_0+\sum_{i=1}^\infty z_ir^i\Big)=F(z_0)+\sum_{i=1}^\infty r^i\sum_{j=1}^i\frac{F^{(j)}(z_0)}{j!}\sum_{k_1+\dots+k_j=i,\,k_l\ge1}z_{k_1}\dots z_{k_j}\,.
\end{equation}
Assuming that everything is well-defined and gathering together the terms on each side which are multiplied by the same power of $r$, one obtains a family of algebraic equations.
Specifically, comparing the coefficient of the powers of $r^0=1$ on both sides we obtain
\[
0=f\big(a_0(\tau),\Lambda(\tau)\big)\,,
\]
which trivially holds true when $a_0(\cdot)=X^s(\cdot)$  Note that if~\ref{H1} is satisfied then also $a_0(\cdot)=X^u(\cdot)$ would satisfy the previous equality). We proceed by considering $a_0(\cdot)=X^s(\cdot)$ not as a variable but as a known term and therefore we rename it as $a_0^s(\cdot)$. 
Then, regarding the coefficient of the term $r$ one has to solve the algebraic equation
\[
\dot a^s_0(\tau)= \partial_xf\big(a^s_0(\tau),\Lambda(\tau)\big)\,a_1(\tau),\quad\text{where } a^s_0(\cdot)=X^s(\cdot).
\] 
Notice that, since for any fixed $\tau\in\R$, $X^s(\tau)$ is an hyperbolic fixed point, then $\partial_xf\big(X^{s}(\tau),\Lambda(\tau)\big)\neq0$ for all $\tau \in\R$.  We shall call $a_1^s(\cdot)$ the obtained coefficient, recalling that it depends on the fact that $a_0^s(\cdot)=X^s(\cdot)$. This reasoning can be iterated so that one obtains a sequence of coefficients $(a^s_i)_{i\in\N}$ such that  for all $i\ge2$ one has 
\begin{equation*}\label{eq:21/05-12:21}
a_i^s(\tau)=\partial_xf^{-1}\Bigg[\dot a_{i-1}^s(\tau)- \sum_{j=2}^i\frac{1}{j!}\frac{\partial^j f}{\partial x^j}\sum_{\substack{k_1+\dots+k_j=i,\\ k_l\ge1}}a_{k_1}^s(\tau)\dots a_{k_j}^s(\tau)\Bigg],
\end{equation*}
where all the derivatives of $f$ are evaluated at $(a^s_0(\tau),\Lambda(\tau))$, and $ a_0^s(\cdot)=X^s(\cdot)$.  The notation $a^s_i$ is used to remind the reader that the obtained coefficients depend upon the initial choice 
 $a_0^s(\cdot)=X^s(\cdot)$ (for example, if \ref{H1} holds and one sets $a_0(\cdot)=X^u(\cdot)$ then a different sequence of coefficients $(a^u_i)_{i\in\N}$ can be constructed using the same method).
Reasoning by induction, it is easy to prove that for every $\nin$ there is $M_n>0$ such that $\|a^s_{n}\|_{\mathcal{C}(\R,\R)}<M_n$ and also that if $n\ge 1$, $\lim_{\tau\to\pm\infty}\dot a_{n-1}(\tau)=\lim_{\tau\to\pm\infty} a_{n}(\tau)=0$.\par\smallskip

We now prove that the above-constructed coefficients $(a^s_{n})_\nin$ allow us to obtain an asymptotic series expansion of $x^r_-$. In order to compare the so-constructed series with $x^r_-$, a new change of variable to the fast time-scale is necessary. There is no problem in doing so, since the coefficients $a^s_i$ are defined on the whole real line. Moreover, in order to simplify the notation, the symbol $S_n^s(r,t)$ will represent the partial sum
\[
S_n^s(r,t)=\sum_{i=0}^na_i^s(rt)r^i.
\]
Let us fix any $\nin$. Since $\partial_xf\big(X^s\big(\Lambda(rt)\big),\Lambda(rt)\big)<0$  for all $t\in\R$,  all the coefficients $(a^s_{n})_\nin$ are continuous and bounded, and since $\partial_xf$ is continuous, there exists $\overline r^s_n>0$ such that, 
\begin{equation}\label{eq:r_n}
\overline r^s_n=\sup\left\{r>0\ \Bigg|\
\partial_xf\big( S_n^s(r,t),\Lambda(rt)\big)<0\,\text{ for all }t\in\R\right\}.
\end{equation}
For any fixed $0<r<\overline r^s_n$ and  $C>0$, consider the two continuous functions $\gamma_1^r,\gamma_2^r\colon\R\to\R$ defined by
\[
t\mapsto\gamma_1^r(t)=S_n^s(r,t)-Cr^{n+1},\quad\text{and}\quad t\mapsto\gamma_2^r(t)=S_n^s(r,t)+Cr^{n+1}.
\]
Notice that, since  by construction
\begin{equation}\label{eq:31/03-19:40}
\lim_{t\to\pm\infty}S_n^s(r,t)=\lim_{t\to\pm\infty} X^s(rt)=X^s_\pm,
\end{equation}
then  there is $T(n,r,C)\in\R$ such that $x^r_-$ is defined at least until $t=T(n,r,C)$ and  $\gamma_1^r(t)\le x^r_-(t)\le\gamma_2^r(t)$ for all $t<T(n,r,C)$. In fact, we aim at finding a constant $C^s_n>0$ so that for all $s\in\R$ if  $\gamma_1^r(s)\le x_0\le\gamma_2^r(s)$ then $\gamma_1^r(t)\le x(t,s,x_0)\le\gamma_2^r(t)$ for all $t\ge s$ and $0<r<\overline r^s_n$ which would end the proof of the first statement.

To the aim, notice that $f(\gamma_1^r(t),\Lambda(rt))>\dot \gamma_1^r(t)$ if and only if
\[
f\big(S_n^s,\Lambda\big)-Cr^{n+1}\partial_xf\big(S_n^s,\Lambda\big)+ O(r^{n+2})>\frac{\txtd }{\txtd t}S_n^s= \sum_{i=0}^n \frac{\txtd  a_i^s}{\txtd t}r^{i+1},
\]
where $S_n^s$ is always evaluated at $(r,t)$ and the remaining functions at $rt$. 
On the other hand, from the fact that $\sum_{i=0}^\infty a_i^s(rt)r^i$ is assumed to be a formal solution of ~\eqref{eq:multisc-slow} and using~\eqref{eq:TaylorFormula}, we can write
\[
\sum_{i=0}^n \frac{\txtd  a_i^s}{\txtd t}r^{i+1}+ O(r^{n+2})=f\big(S_n^s,\Lambda\big)+a^s_{n+1}r^{n+1}\partial_xf\big(S_n^s,\Lambda\big)+O(r^{n+2}).
\]
where, once again, $S_n^s$ is evaluated at $(r,t)$ and all the other functions at $rt$.
\begin{figure}[htbp]
\centering
\begin{overpic}[trim={3.1cm 0.7cm 2.7cm 0.2cm},clip,width=\textwidth]{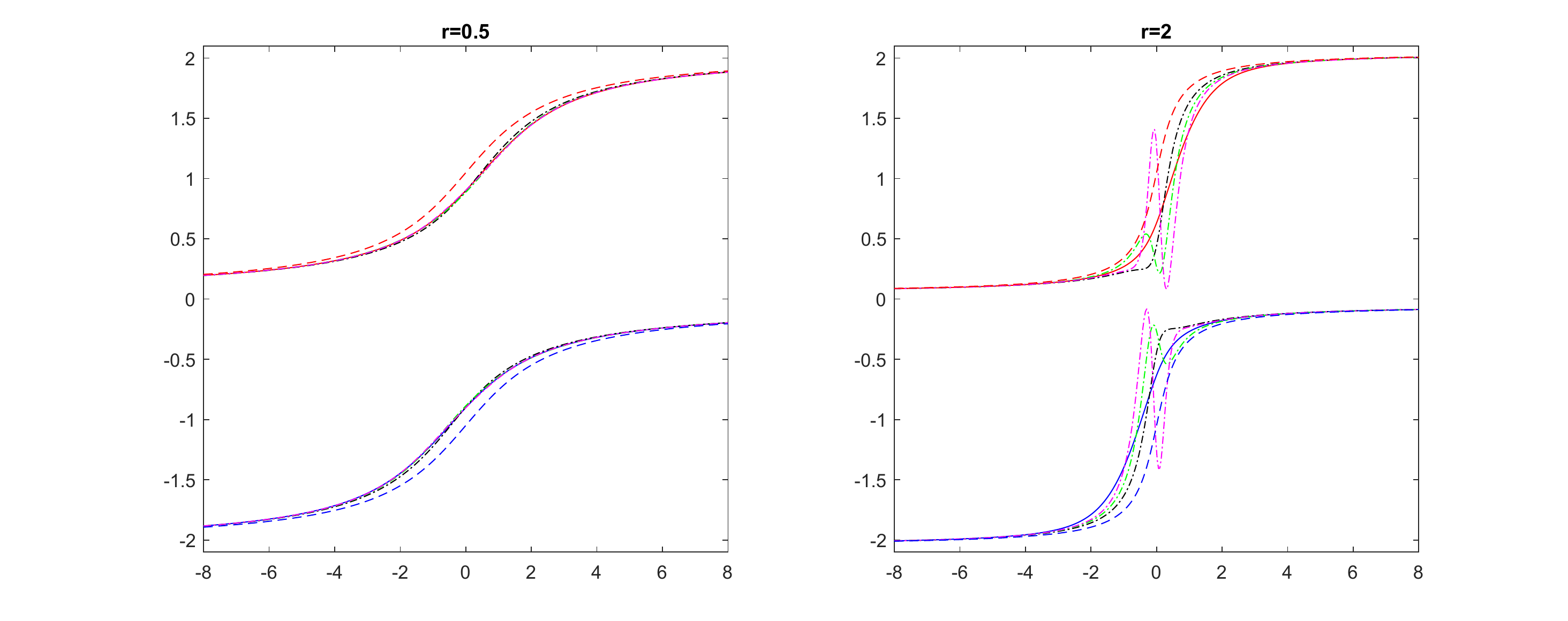}   
\put(36,-1){$t$}	
\put(91,-1){$t$}
\put(-2,40){$x$}
\put(51,40){$x$}
\end{overpic}

\caption{\label{fig:2}A numerical simulation showing the approximations of order up to three for the pullback attracting and repelling solutions of  $\dot x= -(x-(2/\pi)\arctan(rt))^2+1.1$ for $t\in[-8,8]$ and $r=0.5$ on the left and $r=2$ on the right. The solid lines represent the locally pullback attracting solution $x^r_-(t)$ (in red) and locally pullback repelling solution $x^r_+(t)$ (in blue). The dashed lines represent the curves of quasi-static equilibria $X^{s}(rt)$ (in red) and $X^{u}(rt)$ (in blue) of the associated family of autonomous problems. The dot-dashed lines are the approximations $S^s_n(r,t)$ and $S^u_n(r,t)$ calculated using Proposition~\ref{prop:expansions}; $n=1$ in black, $n=2$ in green and $n=3$ in magenta. The picture on the right-hand side shows how the approximations behave always correctly on suitable half-lines but  cease to be reliable over the whole real line as $r$ increases.}
\end{figure} 
Then, gathering the previous information one has that $f(\gamma_1^r(t),\Lambda(rt))>\dot \gamma_1^r(t)$ if
\begin{equation}\label{eq:26/10-18:24}
-(C+a^s_{n+1})r^{n+1}\partial_xf\big(S_n^s,\Lambda\big)+O(r^{n+2})>0.
\end{equation}
The first term in this expression determines its sign as $r\to0$. Reasoning analogously for the curve $\gamma_2^r$ we have that $f(\gamma_2^r(t),\Lambda(rt))<\dot \gamma_2^r(t)$ if
\[
(C+a^s_{n+1})r^{n+1}\partial_xf\big(S_n^s,\Lambda\big)+O(r^{n+2})<0,
\]
From~\eqref{eq:r_n} and $\|a^s_{n}\|_{\mathcal{C}(\R)}<M_n$, it is sufficient to consider $C^s_n$ big enough so that the previous inequalities hold true for all $0<r<\overline r^s_n$.  Therefore, for any fixed $0<r<\overline r^s_n$, the set of initial conditions $\{(s,x_0)\mid \gamma_1^r(s)\le x_0\le\gamma_2^r(s),\, s\in\R\}$ is positively invariant. Moreover, since  $\gamma_1^r(t)\le x^r_-(t)\le\gamma_2^r(t)$ at least for $t<T(n,r,C^s_n)$, then, for all $0<r<\overline r^s_n$, $x^r_-$ must be globally defined and the previous relation of order must hold for all $t\in\R$, which concludes the proof of the first part of (i).\par\smallskip

Concerning the last part of (i), 
let us fix $\ep>0$. From~\eqref{eq:31/03-19:40} and Proposition~\ref{prop:pbk-attr-rep} we have that there is $\beta=\beta(\ep,n,r)<0$ so that
\[
|S^s_n(r,t)-X^s_-|<\frac{\ep}{2}\quad\text{and}\quad |x^r_-(t)-X^s_-|<\frac{\ep}{2}\quad\text{for all }t<\beta(\ep,n,r),
\]
which leads to \eqref{eq:approx-half-lines1}. On the other hand, note that if $r<\min\big\{\overline r^s_n,\sqrt[n+1]{\ep/C^s_n}\big\}$, then~\eqref{eq:approx_are_close} implies that $\beta(\ep,n,r)=\infty$, which ends the proof.
\end{proof}

\begin{rmk}\label{rmk:weakH1}
(i) Let us notice that, in the previous result, the assumption~\ref{H1} can be weakened by considering that the family of quasi-static unstable equilibria $X^u$ is defined only in an interval $[\overline \lambda,\lambda_+]$ with $\lambda_-<\overline \lambda<\lambda_+$ and the equilibrium $X^u(\lambda)$ is hyperbolic for all $\lambda\in(\overline \lambda,\lambda_+]$ but possibly not hyperbolic at $\overline \lambda$. In such a case, however, for every $r>0$ there is $\overline t(r)\in\R$ such that $X^u(rt)$ and consequently all the coefficients $a^u_i$ are defined only for $t\ge \overline t(r)$. Moreover, instead of the inequality on the right-hand side of~\eqref{eq:approx_are_close}, one only attains the inequality on the right-hand side of~\eqref{eq:approx-half-lines} where $\alpha(\ep,n,r)>\overline t(r)$. 

 (ii) The value of $r>0$ up to which the approximations provided in \ref{prop:expansions} are reliable is problem-dependent. For example,  in Figure \ref{fig:2} it is possible to appreciate how the approximations of order $n=1,2,3$ behave with respect to the pullback solutions of a scalar quadratic differential equation with a time-dependent variation of a parameter.
\end{rmk}

Next we aim to use the previous results to provide further information on the occurrence of a rate-induced tipping point. Thanks to Theorem~\ref{thm:fundamentals} we know that,  for $r>0$ sufficiently small, $ x^r_-$ and $x^r_+$ respect the same mutual order as their associated curves of quasi-static equilibria $X^s$ and $X^u$ (wherever they are both defined and comparable), and that  $ x^r_-$ and $x^r_+$ ``collide'' in a rate-induced tipping point if and only if  there is $r^*>0$ such that $  x^{r^{\scaleto{*}{3pt}}}_-= x^{r^{\scaleto{*}{3pt}}}_+$ and $ x^r_-$ loses uniform stability at $r^*$.\par\smallskip

As before, we will simplify the notation by setting 
\[
S^s_n(r,t)=\sum_{i=0}^na_i^s(rt)r^i\quad\text{and}\quad S^u_n(r,t)=\sum_{i=0}^na_i^u(rt)r^i.
\]

Proposition~\ref{prop:expansions} immediately allows to obtain  a simple  sufficient condition for end-point tracking.

\begin{prop}\label{prop:suff-end-point1}
Under the assumptions and notation of {\rm Proposition~\ref{prop:expansions}}, and fixed $\nin$ and $0<r<\overline r_n$, if  there are $\ep>0$ and $T>0$ such that, for all $t>T$,
\begin{equation}\label{eq:27/10-11:24}
X^u_++\ep<S_n^s(r,t)-C_n^s r^{n+1},\quad\text{or }\quad S_n^u(r,-t)+ C_n^ur^{n+1}<X^s_--\ep,\end{equation}
then $x^r_-$ and $x^r_+$ are distinct. In particular, if the future limit-problem $\dot x =f(x)$ does not have any other unstable equilibria, or for any other hyperbolic unstable equilibrium $Y^u_+>X^s_+$ one has that 
\begin{equation}\label{eq:27/10-17:02}
S_n^s(r,t)-C_n^s r^{n+1}<Y^u_+-\ep,
\end{equation}
then $x^r_-$  end-point tracks the associated curve of quasi-static equilibria.
\end{prop}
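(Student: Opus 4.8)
The plan is to combine the two–sided sandwich estimates of Proposition~\ref{prop:expansions} with the asymptotic behaviour of the pullback solutions from Proposition~\ref{prop:pbk-attr-rep} and the rigidity of scalar equations. Throughout I fix $\nin$ and $0<r<\overline r_n:=\min\{\overline r^s_n,\overline r^u_n\}$, so that Proposition~\ref{prop:expansions} makes $x^r_-$ and $x^r_+$ globally defined and bounded, with
\[
S^s_n(r,t)-C^s_nr^{n+1}<x^r_-(t)<S^s_n(r,t)+C^s_nr^{n+1}\ \text{ and }\ S^u_n(r,t)-C^u_nr^{n+1}<x^r_+(t)<S^u_n(r,t)+C^u_nr^{n+1}
\]
for all $t\in\R$; recall also that $x^r_-(t)\to X^s_-$ as $t\to-\infty$ and $x^r_+(t)\to X^u_+$ as $t\to+\infty$.

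First I would prove distinctness. If the first alternative in~\eqref{eq:27/10-11:24} holds, then $x^r_-(t)>S^s_n(r,t)-C^s_nr^{n+1}>X^u_++\ep$ for $t>T$, while $x^r_+(t)<X^u_++\ep$ for all large $t$; if the second alternative holds, then $x^r_+(s)<S^u_n(r,s)+C^u_nr^{n+1}<X^s_--\ep$ for $s<-T$, while $x^r_-(s)>X^s_--\ep$ for all sufficiently negative $s$. Either way $x^r_-$ and $x^r_+$ are distinct solutions of the scalar problem~\eqref{eq:init-prob}, which has uniqueness, so by the non-crossing of solutions they keep the strict order just displayed, i.e.~$x^r_+(t)<x^r_-(t)$ for every $t\in\R$.

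For the end-point tracking part, put $\underline X:=\liminf_{t\to\infty}x^r_-(t)$ and $\overline X:=\limsup_{t\to\infty}x^r_-(t)$, both finite by boundedness of $x^r_-$. From $x^r_+<x^r_-$ and $x^r_+(t)\to X^u_+$ we get $\overline X\ge\underline X\ge X^u_+$, and $\overline X=X^u_+$ is impossible, since it would force $x^r_-(t)\to X^u_+$, whereas $x^r_+$ is the \emph{only} solution of~\eqref{eq:init-prob} with that limit (Proposition~\ref{prop:pbk-attr-rep}), contradicting distinctness; under the first alternative of~\eqref{eq:27/10-11:24} one even gets $\underline X\ge X^u_++\ep$ directly. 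On the other side, if $\dot x=f_+(x)$ has no unstable equilibrium above $X^s_+$, boundedness already gives $\overline X<\infty$; otherwise, with $Y^u_+$ the smallest unstable equilibrium of $f_+$ above $X^s_+$, the upper sandwich bound $x^r_-(t)<S^s_n(r,t)+C^s_nr^{n+1}$ together with~\eqref{eq:27/10-17:02} forces $x^r_-(t)<Y^u_+$ for all large $t$, so $\overline X<Y^u_+$. By~\ref{H1} there is no equilibrium of $f_+$ strictly between $X^u_+$ and $X^s_+$, so the basin of attraction of $X^s_+$ under $f_+$ is exactly the open interval $(X^u_+,Y^u_+)$ (or $(X^u_+,\infty)$), and in all cases $\overline X$ lies in it. To finish, fix a compact interval $\cK\ni X^s_+$ with $x^r_-(t)\in\cK$ for all $t$, let $T_{r,\cK}$ be as in Proposition~\ref{prop:suff-end-point}, and choose $t_m\to\infty$ with $x^r_-(t_m)\to\overline X$. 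Given $\ep'>0$, since $\overline X$ is in the basin there is $s>0$ with $|x(s,f_+,\overline X)-X^s_+|<\ep'/2$; then, using $f_{t_m}\to f_+$, $x^r_-(t_m)\to\overline X$ and Lemma~\ref{lem:kamke}, pick $m$ so large that $t_m+s>T_{r,\cK}$ and $|x(s,f_{t_m},x^r_-(t_m))-x(s,f_+,\overline X)|<\ep'/2$; the cocycle property gives $x^r_-(t_m+s)=x(s,f_{t_m},x^r_-(t_m))$, hence $|x^r_-(t_m+s)-X^s_+|<\ep'$. As $\ep'>0$ was arbitrary, Proposition~\ref{prop:suff-end-point} yields $x^r_-(t)\to X^s_+$, i.e.~$x^r_-$ end-point tracks $X^s$.

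The hard part will be the middle step, pinning $\overline X$ strictly inside the \emph{open} basin $(X^u_+,Y^u_+)$: the lower cut-off $\overline X>X^u_+$ is the delicate point under the second alternative of~\eqref{eq:27/10-11:24}, where the series controls only $x^r_+$ and one must route through the non-crossing of $x^r_-$ and $x^r_+$ and the fact that $x^r_+$ is the unique solution limiting at $X^u_+$; the upper cut-off $\overline X<Y^u_+$ is exactly what~\eqref{eq:27/10-17:02} is built to supply, while~\ref{H1} is what turns $(X^u_+,Y^u_+)$ into the \emph{whole} basin rather than a sub-interval of it. Everything after that is the by-now standard subsequence-plus-Kamke computation already carried out around~\eqref{eq:15/09-17:00}, funnelled through Proposition~\ref{prop:suff-end-point} to promote ``$x^r_-$ comes $\ep'$-close to $X^s_+$ at one late time, for every $\ep'$'' into genuine convergence.
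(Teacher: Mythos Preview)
Your proof is correct and follows essentially the paper's route: distinctness from the sandwich estimates of Proposition~\ref{prop:expansions} plus the asymptotics of $x^r_\pm$ from Proposition~\ref{prop:pbk-attr-rep}, then end-point tracking by placing the limit points of $x^r_-$ in the basin of $X^s_+$ and invoking the Kamke-type argument around~\eqref{eq:15/09-17:00}. The only differences are cosmetic: you spell out both alternatives in~\eqref{eq:27/10-11:24} (the paper treats only the first), you funnel the final convergence through Proposition~\ref{prop:suff-end-point} using only $\overline X$ (the paper cites~\eqref{eq:15/09-17:00} directly), and the paper bounds $x^r_-$ above by introducing the pullback repeller $y^r_+$ associated to $Y^u_+$ rather than the upper sandwich estimate you use.
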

\begin{proof}
Assume that the first inequality in~\eqref{eq:27/10-11:24} holds true. Since by Proposition~\ref{prop:pbk-attr-rep} we have that for the given $\ep>0$ there is $T_1=T_1(n,\ep)>0$ such that $x^r_+(t)<X^u_++\ep$ for all $t>T_1$, and by Proposition~\ref{prop:expansions} we have that $S_n^s(r,t)-C_n^s r^{n+1}<x^r_-(t)$ for all $t\in\R$, then using the inequality in the hypothesis we immediately obtain that $x^r_+(t)<x^r_-(t)$ for all $t>\max\{T,T_1\}$ which completes the proof of the first statement. Moreover, if  the future limit-problem $\dot x =f(x)$ does not have any other unstable equilibria, this means that $x^r_-$ (which is globally defined and bounded thanks to Proposition~\ref{prop:expansions}) has superior and inferior limits in the basin of attraction of $X^s_+$. Therefore, reasoning as  for~\eqref{eq:15/09-17:00}, we immediately obtain that $x^r_-(t)\to X^s_+$ as $t\to\infty$. The same conclusion holds true if any hyperbolic unstable equilibrium $Y^u_+$ of $Y^u_+$ satisfies~\eqref{eq:27/10-17:02}. Indeed, by reasoning as at the beginning of this proof, one easily obtains that, for $t$ sufficiently big, $x^r_-(t)<y^r_+(t)$, where $y^r_+$ is the locally pullback repelling solution associated to $Y^u_+$ by Proposition~\ref{prop:pbk-attr-rep}. Which again implies that $x^r_-$ end-point tracks the associated curve of quasi-static equilibria.
\end{proof}

\begin{rmk}
The assumptions of Proposition~\ref{prop:suff-end-point1} can be weakened as described in Remark~\ref{rmk:weakH1}. Notice that in this case only the first inequality of the two in~\eqref{eq:27/10-11:24} guarantees the result.
\end{rmk}

Calculating explicitly  $ x^r_-$ and $x^r_+$ is generally unfeasible and their numerical integration is also a far-from-trivial task. However, the asymptotic series approximations obtained in Proposition~\ref{prop:expansions} are easily calculable and  allow us  to characterize the occurrence of  rate-induced tipping point via the change of relative order of solutions lying in a ``close neighborhood'' of $x^r_-$ and $x^r_+$. This idea is inspired, in some sense, by Melnikov's method applied to the non-autonomous perturbation of a homoclinic orbit in a planar system, where the possible crossing between the stable and unstable manifolds is identified by checking the change in relative position between suitable approximations of such manifolds (see Guckenheimer, Holmes~\cite[Section ~4.5]{book:GH}).\par\smallskip 

Before proceeding, let us introduce the definition of visible rate-induced tipping as opposed to what Alkhayoun and Ashwin call invisible tipping \cite{paper:AA}. 

\begin{defn}\label{def:transv_r_tipping}
Let $x^r_-$ be the locally pullback attracting solution and $x^r_+$ be the locally pullback repelling solution for~\eqref{eq:init-prob} respectively associated to the families of quasi-static equilibria $X^s:[\lambda_-,\lambda_+]\to\R$  and $X^u:[\lambda_-,\lambda_+]\to\R$ (see Proposition~\ref{prop:pbk-attr-rep}). We say that $x^r_-$  and $x^r_+$ collide in a \emph{visible rate-induced tipping} at $r=r^*$ if there is $\delta>0$ such that if $X^s(\lambda)>X^u(\lambda)$ \big(resp.~$X^s(\lambda)<X^u(\lambda)$\big) for all $\lambda\in [\lambda_-,\lambda_+]$, then 
\begin{itemize}[leftmargin=*,itemsep=2pt]
\item for all $r\in(r^*-\delta,r^*)$, $x^r_-$ end-point tracks $X^s$ and 
\[
x^r_-(t)>x^r_+(t),\, \big(\text{resp.~} x^r_-(t)<x^r_+(t)\big), \text{ for all }t\in\R;
\]
\item $x^{r^{\scaleto{*}{3pt}}}_-(t)=x^{r^{\scaleto{*}{3pt}}}_+(t)$ for all $t\in\R$;
\item for all $r\in(r^*,r^*+\delta)$,  and $t\in\R$ where $x^r_-$ and $x^r_+$ are both defined,
\[
x^r_-(t)<x^r_+(t),\, \big(\text{resp.~}x^r_-(t)>x^r_+(t)\big).
\]
\end{itemize}
\end{defn}

The main result below offers a necessary and sufficient condition for the occurrence of rate-induced tipping, as well as an upper and lower bound on the tipping value $r^*$ of the parameter,  using only solutions calculated on finite time. This fact has a direct application on the numerical simulation of a tipping phenomenon where the properties of pullback attractivity and pullback repulsivity can not guarantee a reliable finite-time approximation of $x^r_-$ and $x^r_+$ (which are by definition solutions determined by their asymptotic behaviour) after the tipping point.

Despite the relatively technical statement, the idea is simple: Theorem \ref{thm:fundamentals} guarantees that the occurrence of a first tipping point coincides with a collision between the locally pullback solutions $x^r_-$ and $x^r_+$; at the tipping point $x^{r^{\scaleto{*}{3pt}}}_-$ is, by definition, locally pullback attracting and locally pullback repelling; hence, it is possible to find $0<r^*<r_0$ such that,  for all $0<r<r_0$, $x^r_-$ is locally pullback attracting on $\R^-$ and $x^r_+$ is locally pullback repelling on $\R^+$, and one can use the asymptotic series approximation obtained in Theorem \ref{prop:expansions} to choose suitable pairs of solutions (starting at finite time) whose change of relative order at $t=0$ signals the proximity of a rate-induced tipping point. 
For example, consider Figure \ref{fig:3}. The local pullback attractor and repeller are depicted in solid lines (in red the attractor and in blue the repeller) and their first order approximations in dotted lines. Knowing the error of the approximation we can select initial conditions which lie respectively below the attractor and above the repeller (they are depicted in dot-dashed lines). It is possible to appreciate that as $r$ increases and a tipping point approaches, the selected solutions switch relative order at $t=0$---in fact this always happens before the tipping point. Analogously in Figure \ref{fig:4}, the chosen solutions are selected so that one always lies above the attractor and the other below the repeller. A change of relative order at $t=0$ between these two solutions can be related to a change of order at $t=0$ between the pullback solutions. In particular, we may think of the transition at $r=r^*$ in a similar spirit as the occurrence of a heteroclinic connection in a planar  vector field \cite{book:Kuz,paper:WXJ, book:Wig,paper:X}.

\begin{thm}\label{thm:melnikov}
Consider $f:\R\times\R\to\R$ satisfying assumptions~\ref{H0} and~\ref{H1}, and assume that $f$ is smooth and all its partial derivatives of any order with respect to $x$ are bounded on $\big(X^s(\tau),\Lambda(\tau)\big)$ and $\big(X^u(\tau),\Lambda(\tau)\big)$, for all $\tau\in \R$.   
Moreover, fix $\widetilde r>0$ such that for all $0<r\le \widetilde r$, $x^r_-$ is defined and locally pullback attracting in $\R^-$, and $x^r_+$ is defined and locally pullback repelling in $\R^+$. 
 Let $d_0>0$ be the constant given in~\ref{H1},  and consider the functions 
 $D_{out}:\R^+\times\R^+\to\R$ and $D_{in}:\R^+\times\R^+\to\R$ defined by 
\[\begin{split}
D_{out}(\tau,r)=y^r_-(0,-\tau)-y^r_+(0,\tau)&\quad\text{and}\quad
D_{in}(\tau,r)=z^r_-(0,-\tau)- z^r_+(0,\tau),
\end{split}
\]
where,
$y^r_-,y^r_+,z^r_-,z^r_+$ denote the solutions of~\eqref{eq:init-prob},
\[\begin{split}
&y^r_-(t,t_0)=x\big(t,t_0,S_n^s(r,t_0)+\ep\big),\qquad y^r_+(t,t_0)=x\big(t,t_0,S_n^u(r,t_0)-\ep\big)\quad\text{and}\\
 &z^r_-(t,t_0)=x\big(t,t_0,S_n^s(r,t_0)-\ep\big),\qquad z^r_+(t,t_0)=x\big(t,t_0,S_n^u(r,t_0)+\ep\big),
\end{split}
\]
with $\ep\in(0,d_0/2)$, chosen so that  $y^r_-(\cdot,-\tau), z^r_-(\cdot,-\tau)$ are defined in $[-\tau, 0]$ and $y^r_+(\cdot,\tau),z^r_+(\cdot,\tau)$ are defined in $[0,\tau]$  for all $\tau\ge0$ and $0<r<\widetilde r$ (see {\rm Definition~\ref{defn:pullback}}). \\[1ex]
There is $0<\overline r<\widetilde r$ such that $D_{out}(\tau,r)>0$ and $D_{in}(\tau,r)>0$ for all $0<r<\overline r$ and $\tau\ge0$. Moreover, we have that,
\begin{itemize}[leftmargin=20pt, itemsep=2pt]
\item[\rm (a)] there is $r^*\in(0, \widetilde r)$ such that $x^r_-$  and $x^r_+$ collide in a rate-induced tipping at $r=r^*$ if and only if,
\item[\rm(b)] there are constants $r^*\in(\overline r,\widetilde r)$ and $\tau_{in}=\tau_{in}(\ep,n)\ge0$, such that if $\tau>\tau_{in}$, then a $\delta=\delta(n, \tau)>0$ exists such that 
\[
D_{in}\big(\tau,r\big)<0\quad \text{for all }r\in(r^*-\delta, r^*].
\] 
Moreover,  $\delta(n,\tau)$ is decreasing in $\tau$ and  $\lim_{\tau\to\infty}\delta(n,\tau)=0$.
\end{itemize}
Moreover, we have that,
\begin{itemize}[leftmargin=20pt, itemsep=2pt]
\item[\rm (c)]   there are constants  $r^*\in(0, \widetilde r)$ and $\delta>0$ such that $x^r_-(0)\ge x^r_+(0)$ for all $r\in(r^*-\delta,r^*)$, $x^{r^{\scaleto{*}{3pt}}}_-=x^{r^{\scaleto{*}{3pt}}}_+$, and $x^r_-(0)< x^r_+(0)$ for all $r\in(r^*,r^*+\delta)$, if and only if 
\item[\rm (d)]   there are constants $r^*\in(\overline r,\widetilde r)$ and $\delta>0$  such that $D_{out}\big(\tau,r\big)>0$ for all $r\in(r^*-\delta,r^*]$ and $\tau>0$, whereas   for every $r\in(r^*, r^*+\delta)$ there is a  $\tau_{out}=\tau_{out}(\ep,n, r)>0$ so that
\[
D_{out}\big(\tau,r\big)<0\quad\text{for all }\tau>\tau_{out}.
\]
\end{itemize}

 If additionally any point $r^*>0$ for which $x^{r^{\scaleto{*}{3pt}}}_-=x^{r^{\scaleto{*}{3pt}}}_+$ is isolated, then {\rm (d)} implies that $r^*$ is a visible rate-induced tipping point.
\end{thm}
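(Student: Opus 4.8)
The plan is to recover, from (d), the three defining properties of a visible rate-induced tipping in Definition~\ref{def:transv_r_tipping}, using three ingredients: the equivalence (c) $\Leftrightarrow$ (d) already established above, the non-crossing property of solutions of the scalar equation~\eqref{eq:init-prob} (distinct solutions never meet, by uniqueness in~\ref{H0}), and the isolation hypothesis. First I would pass from (d) to (c), obtaining $r^*\in(0,\widetilde r)$ and $\delta_0>0$ such that $x^r_-(0)\ge x^r_+(0)$ for $r\in(r^*-\delta_0,r^*)$, $x^{r^*}_-\equiv x^{r^*}_+$ on $\R$, and $x^r_-(0)<x^r_+(0)$ for $r\in(r^*,r^*+\delta_0)$. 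Since $f$ is bounded (assumption~\ref{H0}), every solution of~\eqref{eq:init-prob} is global, so the common domain of $x^r_-$ and $x^r_+$ is always $\R$; in particular the second bullet of Definition~\ref{def:transv_r_tipping} is exactly the relation $x^{r^*}_-\equiv x^{r^*}_+$, which (c) already provides.

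Next I would use the isolation hypothesis to fix $\delta_1\in(0,\delta_0]$ so that $r^*$ is the only rate in $(r^*-\delta_1,r^*+\delta_1)$ for which $x^r_-\equiv x^r_+$. For any $r$ in the punctured interval the functions $x^r_-$ and $x^r_+$ are solutions of~\eqref{eq:init-prob} that differ at some point, so by uniqueness they agree nowhere and $t\mapsto x^r_-(t)-x^r_+(t)$ has a constant nonzero sign on $\R$. Combined with the sign at $t=0$ supplied by (c), this promotes $x^r_-(0)\ge x^r_+(0)$ to $x^r_-(t)>x^r_+(t)$ for all $t\in\R$ when $r\in(r^*-\delta_1,r^*)$, and $x^r_-(0)<x^r_+(0)$ to $x^r_-(t)<x^r_+(t)$ for all $t\in\R$ when $r\in(r^*,r^*+\delta_1)$. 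This yields the ordering statements in the first and third bullets of Definition~\ref{def:transv_r_tipping}.

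It remains to show that $x^r_-$ end-point tracks $X^s$ for every $r\in(r^*-\delta,r^*)$, with $\delta\le\delta_1$. I would prove that the forward limit set of $x^r_-$ is contained in the basin of attraction of $X^s_+$ for the future limit-problem $\dot x=f_+(x)$, and then conclude $x^r_-(t)\to X^s_+$ by the uniform Kamke-continuity argument used for the first assertion of Theorem~\ref{thm:fundamentals} (cf.~\eqref{eq:15/09-17:00}). For the confinement there are two complementary routes. On the quantitative side, Proposition~\ref{prop:expansions}(i) gives $|x^r_-(t)-S^s_n(r,t)|<C^s_n r^{n+1}$ on all of $\R$ for the fixed $n$ of the statement (valid once $\widetilde r$ is below the corresponding $\overline r^s_n$, which we may and do assume), and $S^s_n(r,t)\to X^s_+$ as $t\to\infty$; hence $\limsup_{t\to\infty}|x^r_-(t)-X^s_+|\le C^s_n r^{n+1}$, and taking $\widetilde r$ small enough that $C^s_n r^{n+1}$ is below the distance from $X^s_+$ to the nearest other equilibrium of $f_+$---which on the side of $X^u_+$ is at least $d_0$ by~\ref{H1}---places $x^r_-(t)$, for $t$ large, in a compact subinterval of the basin of $X^s_+$. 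The more geometric route avoids the global validity of the expansion and uses the strict inequality $x^r_-(t)>x^r_+(t)$ just obtained together with the local pullback repulsivity of $x^r_+$ (Definition~\ref{defn:pullback}): if $\liminf_{t\to\infty}x^r_-(t)$ equalled $X^u_+=\lim_{t\to\infty}x^r_+(t)$, then along a sequence $t_k\to\infty$ the gap $|x^r_-(t_k)-x^r_+(t_k)|$ would eventually drop below the repulsivity radius of $x^r_+$, forcing $x^r_-(0)=x\big(0,t_k,x^r_-(t_k)\big)\to x^r_+(0)$ and contradicting $x^r_-(0)>x^r_+(0)$; so $x^r_-$ does not asymptotically approach the unstable branch $X^u_+$.

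With $\delta$ taken to be the smaller of $\delta_1$ and the threshold from the last reduction, all three bullets of Definition~\ref{def:transv_r_tipping} hold around $r^*$, so $r^*$ is a visible rate-induced tipping point. The main obstacle is the end-point-tracking step: promoting order at $t=0$ to global order is routine once isolation and non-crossing are in place, but showing that, for $r$ just below $r^*$, $x^r_-$ genuinely converges to $X^s_+$ requires controlling the forward asymptotics of $x^r_-$ from rather weak data---either the quantitative global bounds of Proposition~\ref{prop:expansions} together with~\ref{H1}, or the pullback-repulsivity trapping above. A careful write-up must in addition rule out that some unstable equilibrium of $f_+$ lying above $X^s_+$ captures $\limsup_{t\to\infty}x^r_-(t)$; this is again excluded by the asymptotic upper bound of Proposition~\ref{prop:expansions}(i) once the relevant rate is small enough.
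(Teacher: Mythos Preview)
Your write-up is considerably more explicit than the paper's. For this final implication the paper simply observes that, under the isolation hypothesis, the weak inequality $x^r_-(0)\ge x^r_+(0)$ obtained in (c) sharpens to a strict one on $(r^*-\delta,r^*)$, and then asserts without further argument that this ``implies that $r^*$ is a visible rate-induced tipping point''; it does not address the end-point-tracking clause of Definition~\ref{def:transv_r_tipping}. Your reduction to (c) via the already-proved equivalence, together with scalar uniqueness and isolation to promote the sign at $t=0$ to a global strict order $x^r_-(t)>x^r_+(t)$ (resp.\ $<$), is exactly what the paper has in mind for the second and third bullets of the definition and for the ordering half of the first bullet.

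The obstacle you single out---showing that $x^r_-$ end-point tracks $X^s$ for $r\in(r^*-\delta,r^*)$---is genuine, and neither your proposal nor the paper fully closes it. Your Route~1 invokes Proposition~\ref{prop:expansions}(i) ``once $\widetilde r$ is below $\overline r^s_n$, which we may and do assume'' and later requires ``taking $\widetilde r$ small enough''; but $\widetilde r$ is \emph{fixed} in the hypotheses, and the tipping value lies in $(\overline r,\widetilde r)$, so you are not free to shrink it. The global bound~\eqref{eq:approx_are_close} is only guaranteed for $r<\overline r^s_n$, and nothing forces $r^*-\delta$ into that range. Route~2 is sound for the lower barrier: your contradiction via pullback repulsivity (equivalently, the uniqueness statement in Proposition~\ref{prop:pbk-attr-rep}) cleanly rules out $\liminf_{t\to\infty}x^r_-(t)=X^u_+$, and together with $x^r_->x^r_+$ this yields $\liminf_{t\to\infty}x^r_-(t)>X^u_+$. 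For the upper side, however, you fall back on Route~1 and inherit the same unwarranted smallness assumption. Without an a~priori upper bound on $x^r_-$ valid for $r$ near $r^*$, one cannot yet exclude that $x^r_-$ is unbounded above or is captured by a higher unstable equilibrium of $f_+$, so the end-point-tracking clause remains open. A complete argument would require either an additional structural hypothesis (for instance, that $X^s_+$ is the top equilibrium of $f_+$, or that a second stable quasi-static branch traps $x^r_-$ from above) or an upper-barrier argument independent of the asymptotic expansion.
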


\begin{figure}[htbp]
\centering
\begin{overpic}[trim={1.2cm 0.8cm 0.8cm 0.3cm},clip,width=0.8\textwidth]{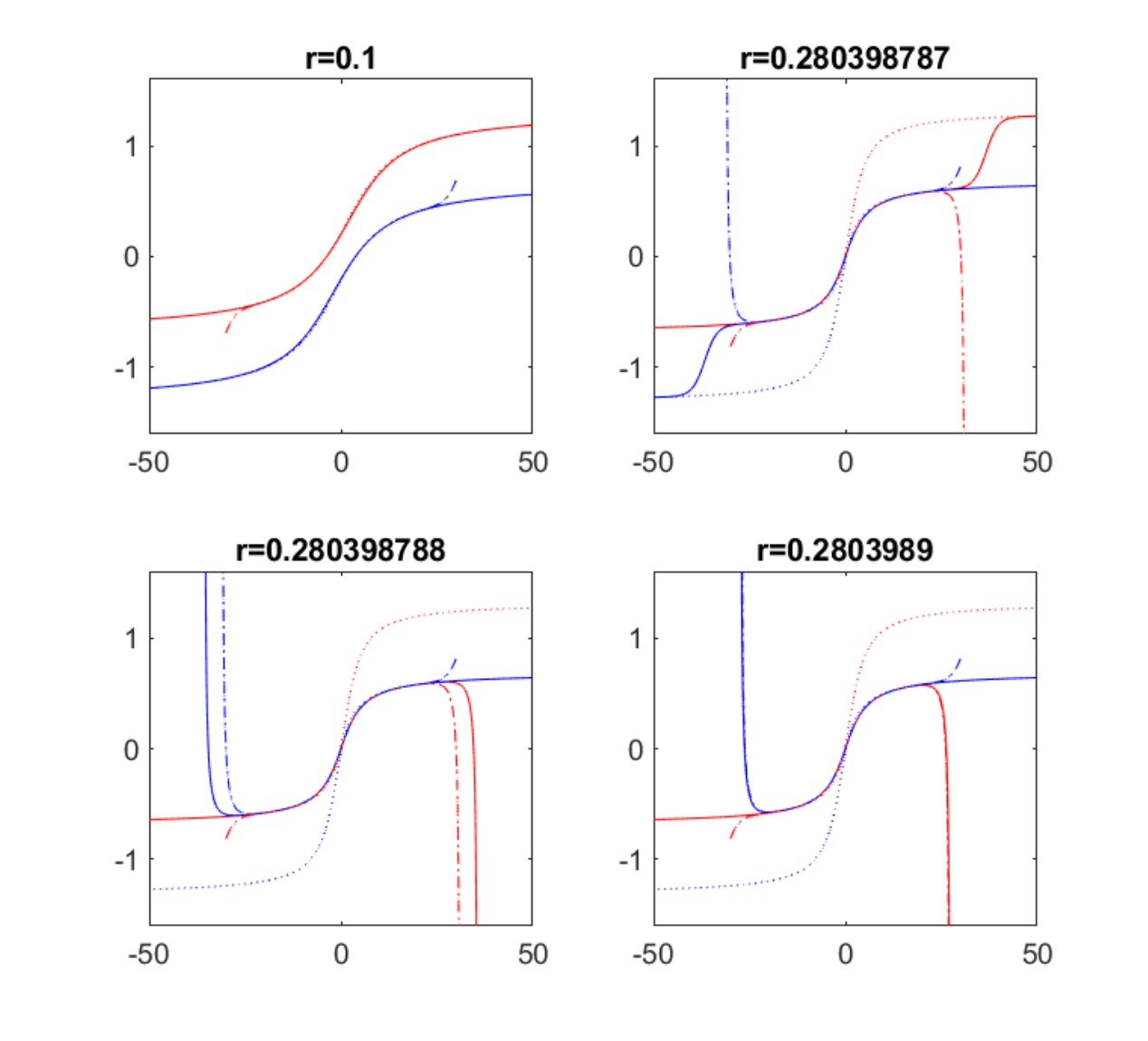}   
\put(36,-1){$t$}	
\put(89,-1){$t$}
\put(-2,32){$x$}
\put(50,32){$x$}
\put(36,50){$t$}	
\put(89,50){$t$}
\put(-2,83){$x$}
\put(50,83){$x$}
\end{overpic}
\caption{\label{fig:3}Numerical simulation of solutions of the scalar differential problem $\dot x= -(x-(2/\pi)\arctan(rt))^2+0.1$ depicting the equivalence (a) $\Leftrightarrow$ (b) in Theorem~\ref{thm:melnikov}. Solid lines represents the locally pullback solutions $x^r_-$ (in red) and $x^r_+$ (in blue). Dotted lines represent the first order approximations of $x^r_-$ (in red) and $x^r_+$ (in blue) as attained from Proposition~\ref{prop:expansions}. Dot-dashed lines represent the solutions $z^r_-(\cdot,-30)$ (in red) and $z^r_+(\cdot, 30)$ (in blue) for $\ep=0.2$. The  top-right panel highlights that the solutions $z^r_-(\cdot,-30)$ and $z^r_+(\cdot, 30)$ blow-up in finite time before the actual rate-induced tipping.} 
\end{figure}

\begin{proof}
First of all notice that, the solutions $y^r_-,y^r_+,z^r_-,z^r_+$ are well-defined thanks to Proposition~\ref{prop:expansions} and  Definition~\ref{defn:pullback}. Moreover, due to Propositions~\ref{prop:hyperbolic} and~\ref{prop:expansions}, and since $\ep\in(0,d_0/2)$,  there is $\overline r>0$ such that if $0<r<\overline r$, then  $D_{out}(\tau,r)$ and $D_{in}(\tau,r)$ are strictly greater than zero for all $\tau\ge0$. Next we prove that (a) $\Leftrightarrow$ (b).\par\smallskip

(a) $\Rightarrow$ (b). Firstly, let us consider $r=r^*$. Due to Theorem~\ref{thm:fundamentals}(iii), we have that $x^{r^{\scaleto{*}{3pt}}}_-=x^{r^{\scaleto{*}{3pt}}}_+$ and $\lim_{t\to\infty} x^{r^{\scaleto{*}{3pt}}}_-(t)=X^u_+$. Therefore, and thanks also to  Proposition~\ref{prop:expansions}, there is $\tau_{in}=\tau_{in}(\ep,n)\ge0$, such that for all $\tau>\tau_{in}$
\[
S_n^s(r^*,-\tau)-\ep<x^{r^{\scaleto{*}{3pt}}}_-(-\tau),\quad\text{and}\quad x^{r^{\scaleto{*}{3pt}}}_-(\tau)<S_n^u(r^*,\tau)+\ep,
\] 
which implies 
\begin{equation}\label{eq:20/08-12:37}
z^{r^{\scaleto{*}{3pt}}}_-(0,-\tau)<x^{r^{\scaleto{*}{3pt}}}_-(0)<z^{r^{\scaleto{*}{3pt}}}_+(0,\tau)\quad\text{for all }\tau>\tau_{in}.
\end{equation}
Hence, $D_{in}\big(\tau,r^*\big)<0$ for all $\tau> \tau_{in}$. \par\smallskip

Now, from the continuity of $S_n^u$ and $S_n^s$,  for every fixed $\mu>0$ and  $\tau>0$ a $\delta=\delta(n,\mu,\tau)>0$ exists  such that for all $r\in(r^*-\delta,r^*)$,
\[
|S_n^u(r^*,\tau) - S_n^u(r,\tau)|<\mu\quad\text{and}\quad
|S_n^s(r^*,-\tau)-S_n^s(r,-\tau)|<\mu.
\]
Therefore, from Lemma~\ref{lem:kamke}, for every $\tau>0$ there is $\delta=\delta(n,\tau)>0$ such that if $r\in(r^*-\delta,r^*)$, then
\begin{equation}\label{eq:23/10-15:27}
\begin{split}
\|z^{r^{\scaleto{*}{3pt}}}_-(\cdot,-\tau)-z^r_-(\cdot, -\tau)\|_{\mathcal{C}([-\tau, 0],\R)}<\frac{\big|D_{in}\big(\tau,r^*\big)\big|}{2}\quad\text{and}&\\
\|z^{r^{\scaleto{*}{3pt}}}_+(\cdot,\tau)-z^r_+(\cdot,\tau)\|_{\mathcal{C}([ 0,\tau],\R)}<\frac{\big|D_{in}\big(\tau,r^*\big)\big|}{2}.&
\end{split}\end{equation}
Hence, gathering together \eqref{eq:20/08-12:37} and \eqref{eq:23/10-15:27}, we obtain that for every $\tau>\tau_{in}$ there is $\delta(n,\tau)>0$ such that for all $r\in(r^*-\delta,r^*)$,
\[
D_{in}\big(\tau,r\big)=z^r_-(0,-\tau)-z^r_+(0,\tau)
<0,
\]
which is the aimed inequality.
Finally, considered $\tau_{in}<\tau_1<\tau_2$, from~\eqref{eq:23/10-15:27} we easily have that $0<\delta(n,\tau_2)\le \delta(n,\tau_1)<r^*$. Therefore, the function $\delta_n: (0,\infty)\to\R^+$, $\tau\mapsto \delta(n,\tau)$ has limit $\overline \delta\ge0$. We shall prove that $\overline \delta=0$. Assume by contradiction that $\overline \delta>0$ and consider $r\in(r^*-\overline \delta,r^*)$. Then we would have that 
\begin{equation}\label{eq:24/10-20:11}
D_{in}\big(\tau,r\big)<0,\quad\text{for all }\tau>\tau_{in}.
\end{equation}
Now, by assumption,  $x^r_-(t)\to X^s_+$ and $x^r_+(-t)\to X^u_+$ as $t\to\infty$. However, this implies that there is $\overline\tau_1 \ge\tau_{in}$ such that, for all $\tau>\overline \tau_1$,  $x^r_+(-\tau)<S_n^s(r,- \tau)-\ep$. Therefore, from Proposition~\ref{prop:go-up} we have that, for all $\tau>\overline \tau_1$, $z^r_-(t,-\tau)$ is defined for all $t>-\tau$ and $z^r_-(t,-\tau)\to X^s_+$ as $t\to\infty$. In particular, due to the uniform asymptotic stability of $x^r_-$ and Proposition~\ref{prop:go-up}, there is $\overline \tau_2>0$ such that for all $t>\overline \tau_2$ one has that 
\[
0<x^r_-(t)-z^r_-(t,-\tau)<x^r_-(t)-S_n^u(r,t)+\ep
\]
Thus, for all $\tau>\max\{\overline \tau_1,\overline \tau_2\}$ we have that 
\[
D_{in}\big(\tau,r\big)=z^r_-(0,-\tau)-z^r_+(0,\tau)>0,
\]
which contradicts~\eqref{eq:24/10-20:11}. Hence it must be 
\[
\lim_{\tau\to\infty}\delta_n(\tau)=0,
\]
as claimed.
\par\smallskip

(b) $\Rightarrow$ (a). 
Consider a sequence $(\tau_k)_{k\in\N}$ such that \[D_{in}\big(\tau_k,r^*\big)=z^{r^{\scaleto{*}{3pt}}}_-(0,-\tau_k)- z^{r^{\scaleto{*}{3pt}}}_+(0,\tau_k)<0,\quad \text{for all }k\in\N.
\]
Such a sequence exists due to (b). Note also that since by assumptions $x^{r^{\scaleto{*}{3pt}}}_-$ is locally pullback attracting on $\R^-$, and $x^{r^{\scaleto{*}{3pt}}}_+$ is locally pullback repelling on $\R^+$, when we take the limit as $k\to\infty$ in the previous formula, we obtain that,
\[
x^{r^{\scaleto{*}{3pt}}}_-(0)-x^{r^{\scaleto{*}{3pt}}}_+(0)\le 0.
\]
In order to prove that the previous inequality is, in fact, an equality, let us assume by contradiction that $x^{r^{\scaleto{*}{3pt}}}_-(0)-x^{r^{\scaleto{*}{3pt}}}_+(0)< 0$. Then, by continuous variation of the solutions, there is $0<\rho^*<r^*$ such that $x^{\rho^{\scaleto{*}{3pt}}}_-(0)=x^{\rho^{\scaleto{*}{3pt}}}_+(0)$, i.e.~$\rho^*$ is a tipping point. From (a) $\Rightarrow$ (b) proved above, we have that $D_{in}\big(\tau,\rho^*\big)<0$ for all $\tau> \tau_{in}$. However, note also that, since $\rho^*<r^*$, from (b), there is $\tau_{\rho^{\scaleto{*}{3pt}}}>0$ such that $D_{in}\big(\tau_{\rho^{\scaleto{*}{3pt}}},\rho^*\big)>0$, but this is clearly a contradiction. Therefore, $x^{r^{\scaleto{*}{3pt}}}_-(0)=x^{r^{\scaleto{*}{3pt}}}_+(0)$ which concludes the proof of this implication.
\par\smallskip

\begin{figure}[htbp]
\centering
\begin{overpic}[trim={1.2cm 0.8cm 0.8cm 0.3cm},clip,width=0.8\textwidth]{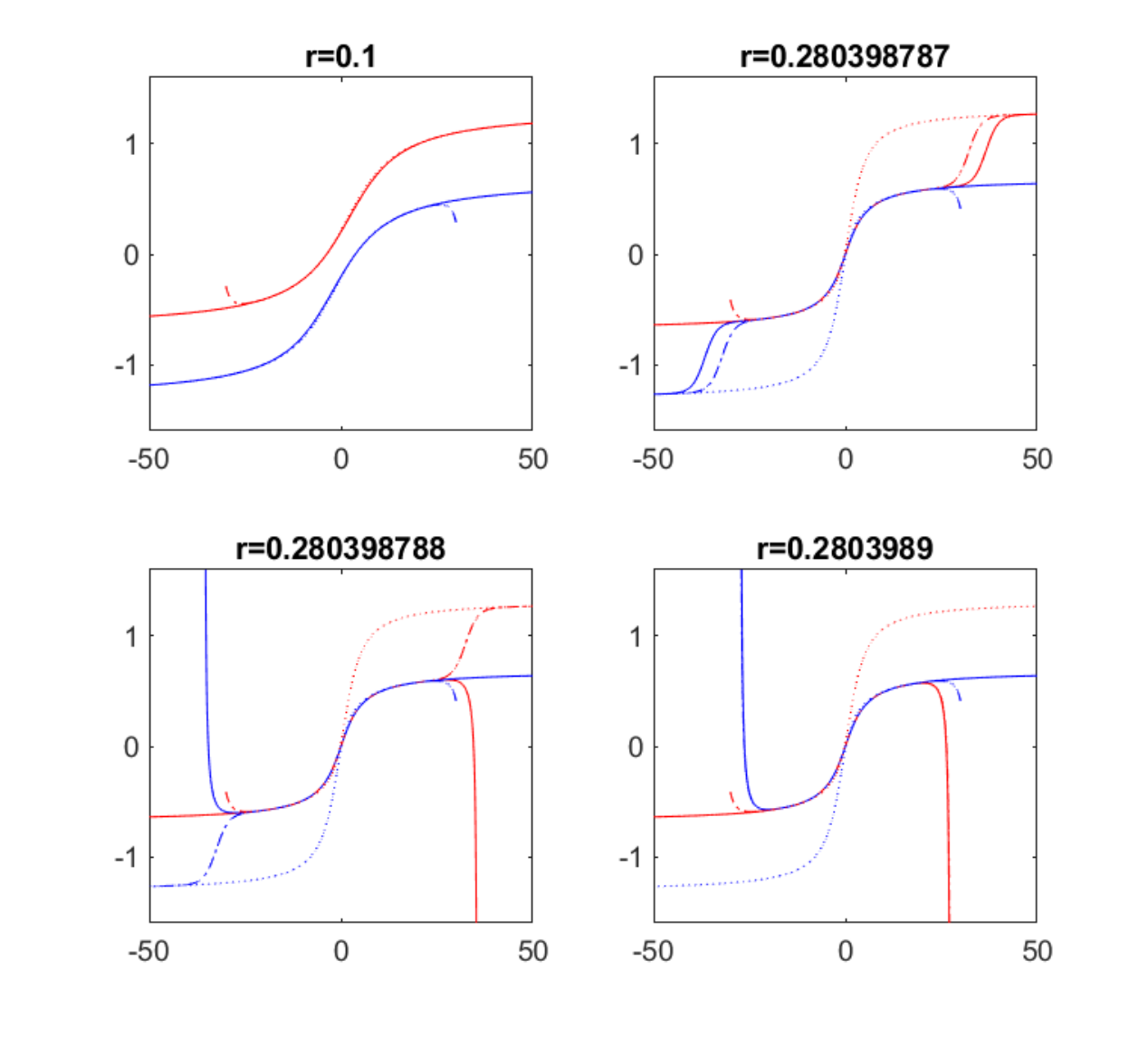}   
\put(36,-1){$t$}	
\put(89,-1){$t$}
\put(-2,32){$x$}
\put(50,32){$x$}
\put(36,50){$t$}	
\put(89,50){$t$}
\put(-2,83){$x$}
\put(50,83){$x$}
\end{overpic}
\caption{\label{fig:4}Numerical simulation of solutions of the scalar differential problem $\dot x= -(x-(2/\pi)\arctan(rt))^2+0.1$ depicting the equivalence (c) $\Leftrightarrow$ (d) in Theorem~\ref{thm:melnikov}. Solid lines represents the locally pullback solutions $x^r_-$ (in red) and $x^r_+$ (in blue). Dotted lines represent the first order approximations of $x^r_-$ (in red) and $x^r_+$ (in blue) as attained from Proposition~\ref{prop:expansions}. Dot-dashed lines represent the solutions $y^r_-(\cdot,-30)$ (in red) and $y^r_+(\cdot, 30)$ (in blue) for $\ep=0.2$. The comparison between the bottom panels highlights that the solutions  $y^r_-(\cdot,-30)$  and $y^r_+(\cdot, 30)$  blow-up in finite time only after the actual rate-induced tipping.} 
\end{figure}

\begin{figure}[htbp]
\centering
\begin{overpic}[trim={0cm 8.7cm 0cm 0cm},clip,width=\textwidth]{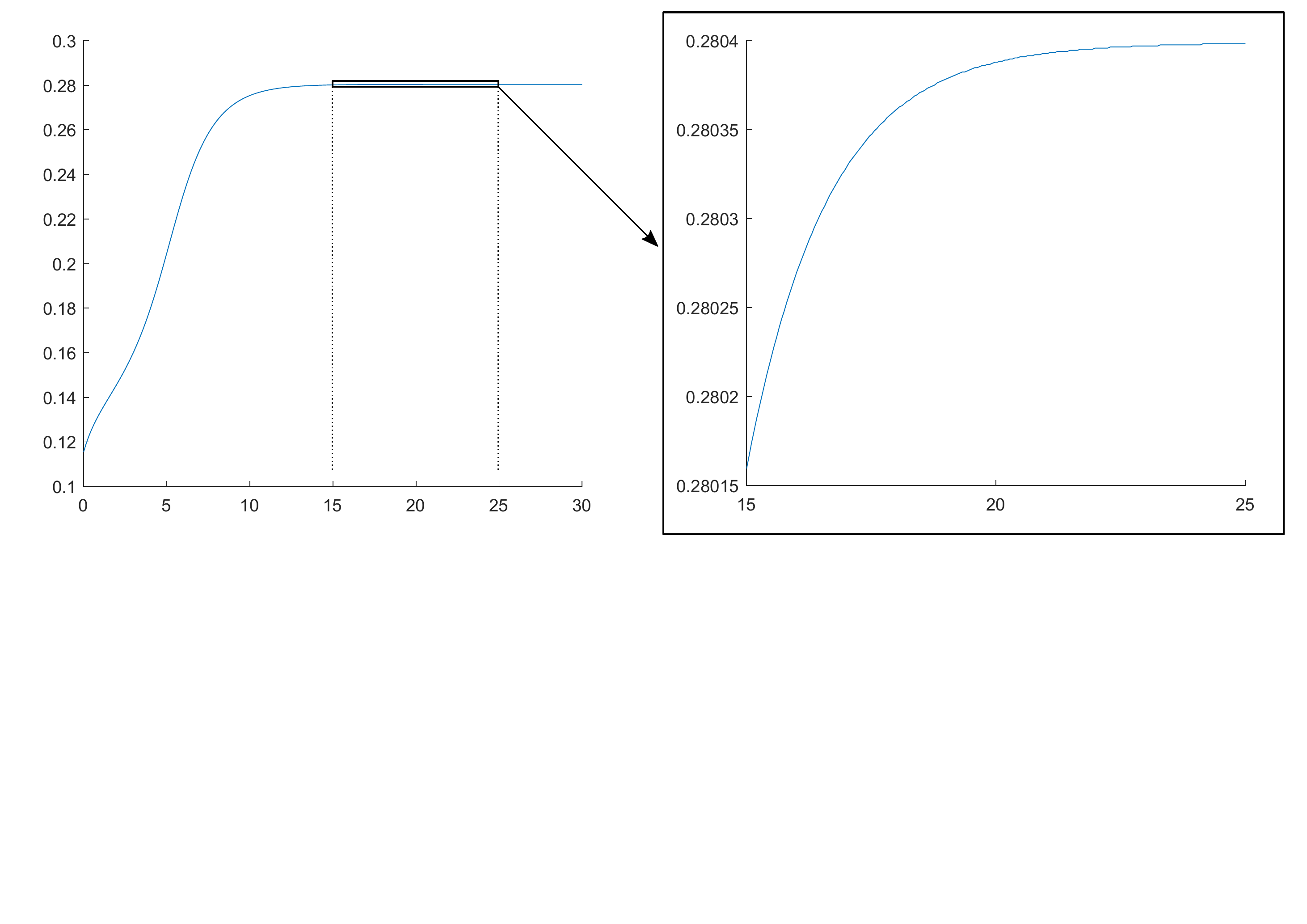}   
\put(92,1.5){$\tau$}	
\put(41,1.5){$\tau$}
\put(8,36){$r^*-\delta(1,\tau)$}
\end{overpic}
\caption{\label{fig:5} 
Numerical simulation of $r^*-\delta(1,\tau)$ as a function of $\tau\in[0,30]$ on the left-hand side and magnification on the right-hand side, for the scalar differential problem $\dot x= -(x-(2/\pi)\arctan(rt))^2+0.1$. The convergence of $\delta$ to zero as proved in (a) $\Rightarrow$ (b) of Theorem \ref{thm:melnikov} becomes apparent as $\tau$ increases.}
\end{figure}

(c) $\Rightarrow$ (d). 
Consider  $r^*\in(0,\widetilde r)$  $\delta>0$ as in (c). Then, $x^r_-(0)\ge x^r_+(0)$ for all $r\in(r^*-\delta,r^*)$
and $t\in\R$. By construction, we immediately have that $y^r_-(0,-\tau)> y^r_+(0,\tau)$, for all  $r\in(r^*-\delta,r^*)$ and $\tau>0$,
which gives us the first property in (d).

Now, consider $r\in(r^*,r^*+\delta)$. From the assumption in (c), we have that $0< x^r_+(0)-x^r_-(0)=:d_1(r)$. In particular, thanks to Proposition~\ref{prop:expansions},  the local pullback attractivity of $x^r_-$ and the local pullback repulsivity of $x^r_+$ (see Definition~\ref{defn:pullback}), there is a $\tau_{out}=\tau_{out}(\ep,n, r)>0$ such that for all $\tau>\tau_{out}$,
\[
\begin{split}
&0<y^r_-(0,-\tau) -x^r_-(0)<\frac{d_1(r)}{2}\quad\text{and}\quad 0<x^r_+(0)-y^r_+(0,\tau)<\frac{d_1(r)}{2}
\end{split}
\]
Consequently, $D_{out}(\tau,r)<0$ for all $\tau>\tau_{out}$.\par\smallskip

(d) $\Rightarrow$ (c)
 Consider sequences $(r_k)_{k\in\N}$ and $(\tau_k)_{k\in\N}$ such that $r_k\searrow r^*$ and for all $k\in\N$, $\tau_{k+1}>\max\{\tau_k,\tau_{out}(\ep,n, r_{k+1})\}$. Then, we have that for all $k\in\N$,
\[
D_{out}(\tau_k,r_k)=y^{r_k}_-(0,-\tau_k)-y^{r_k}_+(0,\tau_k)<0.
\]
In particular, since by definition $x^r_-(0)<y^r_-(0,-\tau)$ and $y^r_+(0,\tau)<x^r_+(0)$ for all $r>0$ and $\tau>0$, then 
\[
x^{r_k}_-(0)-x^{r_k}_+(0)<0,\quad\text{for all }k\in\N.
\]
Taking the limit as $k\to\infty$ we have that 
$x^{r^{\scaleto{*}{3pt}}}_-(0)-x^{r^{\scaleto{*}{3pt}}}_+(0)\le0$. Let us assume that $x^{r^{\scaleto{*}{3pt}}}_-(0)-x^{r^{\scaleto{*}{3pt}}}_+(0)<0$ and prove by contradiction that, in fact, the equality must hold. If the previous inequality is strict, there is $0<\rho^*<r^*$ such that $x^{\rho^{\scaleto{*}{3pt}}}_-(0)=x^{\rho^{\scaleto{*}{3pt}}}_+(0)$ and there are no further tipping points in the interval $(\rho^*, r^*)$. In particular, for all $r\in(\rho^*, r^*]$, $x^r_-(0)<x^r_+(0)$. Therefore, from the second part of the proof of the implication (c) $\Rightarrow$ (d), now applied to $\rho^*$, for every $r\in(r^*-\delta,r^*)$ there is $\tau>0$ such that $D_{out}\big(\tau,r\big)<0$. However, this is in contradiction with the assumption in (c). Hence, it must be  $x^{r^{\scaleto{*}{3pt}}}_-=x^{r^{\scaleto{*}{3pt}}}_+$. Note also that, since by definition $x^r_-(0)<y^r_-(0,-\tau)$ and $y^r_+(0,\tau)<x^r_+(0)$ for all $r>0$ and $\tau>0$ and, by assumption, for every $r\in(r^*, r^*+\delta)$ there is a  $\tau_{out}=\tau_{out}(\ep,n, r)>0$ so that $D_{out}\big(\tau,r\big)<0$ for all  $\tau>\tau_{out}$, then it must be that $x^r_-(0)<x^r_+(0)$ for all $r\in(r^*, r^*+\delta)$.
Finally, for all $r\in(r^*-\delta, r^*)$ it must be $x^r_-(0)\ge x^r_+(0)$. Otherwise, for each $r\in(r^*-\delta,r^*)$ where this is not true there would be $\tau>0$ such that $D_{out}\big(\tau,r\big)<0$ and this would contradict (d). In particular, note that  if we assume that $x^{r}_-\neq x^{r}_+$ for all $r\in (r^*-\delta,r^*)\cup(r^*, r^*+\delta)$ then the same reasoning would give us that $x^r_-(0)> x^r_+(0)$ for all $r\in(r^*-\delta, r^*)$, which, together the previous part, implies that $r^*$ is a visible rate-induced tipping point.
\end{proof}

For an illustration of the loss of uniform asymptotic stability as well as the approximation results by asymptotic series for the occurrence of a rate-induced tipping point, we refer to Figures~\ref{fig:2}-\ref{fig:5}, where we consider the example
\begin{equation*}
\dot x= -(x-(2/\pi)\arctan(rt))^2+\zeta
\end{equation*}
for constants $\zeta=1.1$ and $\zeta=0.1$.

\subsection*{Advantages and limitations}
Hereby, we wish to briefly comment on the applicability, reliability and limitations of the methods and results contained in this section. As we have noted, the occurrence of rate-induced tipping is strictly connected to the asymptotic behaviour of locally pullback attracting and repelling solutions. The very nature of pullback solutions make them (in general) hard to be calculated explicitly and also integrated numerically. Consequently, the reliability of numerical simulations must be proved example by example---a finite time integration does not necessarily guarantee a trustworthy approximation. On the other hand, the construction of asymptotic series expansions---as in Proposition \ref{prop:expansions}---is generic and can be carried out algorithmically using only values of the vector field and its derivatives provided that the appropriate assumptions are satisfied. Therefore, Proposition \ref{prop:suff-end-point1} and Theorem \ref{thm:melnikov} provide rigorous characterizations of end-point tracking and rate-induced tipping that rely only on calculable quantities. Particularly, Theorem \ref{thm:melnikov} shows a constructive rigorous way of running reliable numerical simulations of a rate-induced tipping event for scalar differential equations with a time-dependent drift of a parameter. Figure \ref{fig:5} highlights how the convergence towards the expected tipping point is obtained for values of $\tau$ relatively small, especially if compared to the initial conditions that might be required to reliably approximate the pullback solutions. We are not aware of any other technique in the literature that is at the same time generic, rigorously proved and constructive not even for scalar problems.

A natural question attains the applicability of these methods to higher-dimensional problems. A thorough look at the proof of Proposition \ref{prop:expansions} shows that, when curves of quasi-static equilibria are involved and the required assumptions of regularity are satisfied, the construction of the asymptotic series approximations of the locally pullback solutions is possible also when $N\ge1$. It is clear, however, that both Proposition \ref{prop:suff-end-point1} and Theorem \ref{thm:melnikov} require a well-defined relation of order on the phase space. This fact restricts the applicability of the obtained result to those cases where higher-dimensional system can be brought back to the analysis of a scalar problem---for example through a center manifold reduction method---or where a certain property of monotonicity of the flow is in force. Yet, since we have proven that rate-induced tipping points can be related to a merging of global solutions, the dynamical situation is similar to global homoclinic/heteroclinic  bifurcations \cite{book:Kuz,paper:WXJ, book:Wig,paper:X}. For these global bifurcations, center manifold results are already available~\cite{paper:Sandstede}, so we expect that suitable, yet non-trivial, modifications of our methodology do apply in higher dimensions. We leave these and other possible extensions open for future work. 

\subsection*{Acknowledgments}
We thank two anonymous referees for their comments and suggestions, which have been  included in the current version of the paper.

\end{document}